\documentclass[reqno, 11pt]{amsart} 
\usepackage{amsfonts, amsmath, amssymb, amsthm}
\usepackage[margin=2.75cm, heightrounded]{geometry}
\usepackage{fancybox}
\usepackage{hhline, float}
\usepackage{amsmath}
\usepackage{mathtools}
\usepackage{mathrsfs}
\usepackage{nccmath}
\usepackage[dvipsnames]{xcolor}
\usepackage[colorlinks=true]{hyperref}
\hypersetup{citecolor=red, linkcolor=blue}

\newtheorem{theorem}{Theorem}[section]
\newtheorem{lemma}[theorem]{Lemma}

\theoremstyle{definition}
\newtheorem{definition}[theorem]{Definition}

\newtheorem{corollary}[theorem]{Corollary}
\newtheorem{proposition}[theorem]{Proposition}

\newtheorem*{proof of Tin}{Proof of Theorem \ref{Tin}}
\newtheorem*{proof of 2 of A-D}{Proof of Part (2) of Theorem \ref{A-D}}
\newtheorem*{proof of extremal of Tin}{Proof of Theorem \ref{extremal of Tin}}
\newtheorem*{proof of extremal of Tin A D}{Proof of Theorem \ref{extremal of A-D}}

\newtheorem{remark}[theorem]{Remark}

\numberwithin{equation}{section}

\newcommand{\NN}{\ensuremath{\mathbb{N}}}

\newcommand{\RR}{\ensuremath{\mathbb{R}}}
\newcommand{\CC}{\ensuremath{\mathbb{C}}}

\newcommand{\md}{\mathrm{\,d}}

\newcommand{\oo}{\infty}

\newcommand{\pt}{\partial}

\allowdisplaybreaks
\numberwithin{equation}{section}

\makeatletter
\@namedef{subjclassname@2020}{%
\textup{2020} Mathematics Subject Classification}
\makeatother

\begin{document}
\title[Sharp higher-order uncertainty principles]
{Sharp higher-order uncertainty principles}



\author{Hongtao Hu}
\address{Department of Mathematical Sciences, Tsinghua University. No.\,1 Tsinghua Garden, Beijing 100084, People's Republic of China.}
\email{huht23@mails.tsinghua.edu.cn}
\thanks{This manuscript was submitted for publication on September 10, 2026.} 

\author{Meiqi Liu}
\address{School of Mathematical Sciences, Hangzhou Dianzi University, Hangzhou 310018, China.}
\email{liumq@hdu.edu.cn}

\author{Wenming Zou}
\address{Department of Mathematical Sciences, Tsinghua University. No.\,1 Tsinghua Garden, Beijing 100084, People's Republic of China.}
\email{zou-wm@mail.tsinghua.edu.cn}

\begin{abstract}
{\rm We establish a class of sharp higher-order inequalities related to the uncertainty principle. First, by means of the "expanding the squares" method, we give a concise proof of sharp higher-order Heisenberg uncertainty principles. We provide explicit expressions for the optimal constants and establish the existence of extremal functions. Furthermore, we obtain sharp higher-order Heisenberg uncertainty principles for curl-free vector fields. In particular, when $N=2$, we give an answer to the higher-order version of Open Problem 9 raised by Maz’ya in [Integr. Equ. Oper. Theory (2018) 90:25]. Second, we establish sharp higher-order Heisenberg uncertainty principles that do not involve higher-order tensors. We also prove sharp higher-order Heisenberg uncertainty principles and higher-order hydrogen uncertainty principles for radially symmetric functions.
}
\end{abstract}

\subjclass[2020]{26D10 26D15 46E35 81S07}

\maketitle

{\noindent\textbf{Keywords:} Higher-order inequalities, Heisenberg uncertainty principle, hydrogen uncertainty principle, extremal functions, curl-free vector fields}

\vspace{4\baselineskip}

\section{Introduction}\label{sec 1}

	\noindent According to the Heisenberg-Pauli-Weyl uncertainty principle in quantum mechanics, it is impossible to accurately measure both the position $x$ and the momentum $p$ of a particle simultaneously. Heisenberg’s groundbreaking paper \cite{Heisenberg1927} introduced these physical ideas. The rigorous mathematical formulation was subsequently given by Kennard \cite{Kennard1927} and by Weyl and Pauli \cite{Weyl1931}. For a quantum state described by a normalized wave function $\psi(x)$ satisfying  $\int_{-\oo}^{+\oo}|\psi(x)|^2\md x=1$,  the mean values of $x$ and $p$ can be shifted to zero without loss of generality. The variances of $x$ and $p$ then take the form
	\begin{align*}
	    (\Delta x)^2=\int_{-\oo}^{+\oo}x^2|\psi(x)|^2\md x,\qquad (\Delta p)^2=h^2\int_{-\oo}^{+\oo}|\psi'(x)|^2\md x.
	\end{align*}
	where $h$ is the Planck's constant of action. In \cite{Weyl1931}, Weyl and Pauli proved that
	\begin{align*}
	    \Delta p\cdot\Delta x\geq \frac{h}{2}.
	\end{align*}
	This one-dimensional inequality can be generalized to $\RR^N$ $(N\in \NN_+)$ as follows
	\begin{align*}
	    \int_{\RR^N}|\nabla u|^2\md x\int_{\RR^N}|x|^2|u|^2\md x\geq \dfrac{N^2}{4}\left(\int_{\RR^N}|u|^2\md x\right)^2.
	\end{align*}
	This inequality holds for any function $u\in C^\infty_0\left(\RR^N\right)$.
	The same inequality also remains true for functions in the Schwartz space $\mathcal{S}\left(\RR^N\right)$ and in a suitable Sobolev space. Moreover, the constant $\dfrac{N^2}{4}$ is optimal and it is attained by the Gaussian profiles of the form $u(x)=a\mathrm{e}^{-b|x|^2}$ for $a\in\RR\backslash\{0\}$ and $b>0$, which belongs to the Schwartz space $\mathcal{S}\left(\RR^N\right)$ but not $C^\infty_0\left(\RR^N\right)$. The Heisenberg uncertainty principle is an important inequality in harmonic analysis and 
its time‑frequency counterparts are critical for signal and image processing.
	
	In fact, the Heisenberg uncertainty principle can be viewed as a special case of the following interpolation inequality established by Caffarelli, Kohn and Nirenberg \cite{MR768824}, which we shall refer to as the CKN inequality for brevity: 
	\begin{equation*}
	    \int_{\RR^N}|\nabla u|^2\md x\int_{\RR^N}\frac{|u|^{2p-2}}{|x|^{2q-2}}\md x\geq\frac{(N-q)^2}{p^2}\left(\int_{\RR^N}\frac{|u|^p}{|x|^q}\md x\right)^2 \qquad\text{for $u\in C^\oo_0\left(\RR^N\right)$},
	\end{equation*}
	where $0<q<2<p$ and  $2<N<\dfrac{2(p-q)}{p-2}$. The constant $\dfrac{(N-q)^2}{p^2}$ is attained by 
	\begin{equation*}
	    u(x)=\left(b+|x|^{2-q}\right)^{\frac{1}{2-p}}\qquad\text{for }b>0.
	\end{equation*}
	The CKN inequality reduces to the  Heisenberg uncertainty principle when $p\to2$ and $q\to0$, to the Hardy inequality when $p\to2$ and $q\to2$, and to the following hydrogen uncertainty principle when $p\to2$ and  $q\to1$:
	\begin{align*}
	    \int_{\RR^N}\left|\nabla u\right|^2\md x\int_{\RR^N}\left|u\right|^2\md x\geq \frac{(N-1)^2}{4}\left(\int_{\RR^N}\frac{|u|^2}{|x|}\md x\right)^2,
	\end{align*}
	\\
	Here the constant $\dfrac{(N-1)^2}{4}$ is attained by $u(x)=a\mathrm{e}^{-b|x|}$ for $a\in\RR\backslash\{0\}$ and $b>0$. The hydrogen uncertainty principle provides a lower bound for the energy functional of a single hydrogenic atom (see \cite[Section 2.2]{MR2583992}).   It can be used to study the stability of a hydrogenic atom in a magnetic field as discussed in \cite{MR836003,MR2583992}.
	
	By Cauchy inequality, one immediately obtains the vector field version of the  Heisenberg uncertainty principle. In \cite[Section  3.9]{MR3795177}, Maz'ya raised an open problem: determine the best constant $\mu(N)$ in the following inequality
	\begin{align*}
	    \int_{\RR^N}|\nabla \vec{U}|^2\md x\int_{\RR^N}|x|^2|\vec{U}|^2\md x\geq \mu(N)\left(\int_{\RR^N}|\vec{U}|^2\md x\right)^2,
	\end{align*}
	\\
where $\vec{U}$ is an arbitrary divergence-free vector field. C. Cazacu, J. Flynn and N. Lam \cite{MR4470731} solved this open problem for the case $N=2$, while Hamamoto \cite{MR4556904} solved it for $N\geq 3$. In particular, \cite{MR4470731} considered the case where the vector field $\vec{U}=\nabla u$ for $u\in C^\infty_0\left(\RR^N\right)$ and  established the second-order Heisenberg  uncertainty principle:
\begin{align*}
    \int_{\RR^N}|\Delta u|^2\md x\int_{\RR^N}|x|^2|\nabla u|^2\md x\geq\frac{(N+2)^2}{4}\left(\int_{\RR^N}|\nabla u|^2\md x\right)^2,
\end{align*}
for $u\in \mathcal{S}\left(\RR^N\right)$ $(N\geq1)$ and the constant $\dfrac{(N+2)^2}{4}$ is attained by $u(x)=a\mathrm{e}^{-b|x|^2}$ for $b>0$. Here and henceforth, we denote by $\Delta$ the Laplacian operator in the Euclidean space $\RR^N$. Subsequently, \mbox{C. Cazacu}, J. Flynn and N. Lam \cite{MR4565040} established a class of sharp weighted inequalities of CKN type for arbitrary curl‑free vector fields, as well as sharp second‑order CKN inequalities for scalar fields. Duong and Nguyen \cite{MR4902771} proved some sharp second-order CKN inequalities by developing a new technique different from that used in \cite{MR4470731}. In addition, the stability results concerning the  Heisenberg uncertainty principle were obtained in \cite{ MR4255507,MR4151199}. Later, the authors of \cite{MR4682470} obtained sharp stability for the CKN inequlity and uncertainty principle. The stability estimate for second-order Heisenberg uncertainty principle was given in \cite{MR4886055}, and subsequently improved in \cite{MR5008481}, where sharp stability estimates were obtained for the second‑order Heisenberg uncertainty principle with scalar fields, as well as sharp stability of the first‑order  Heisenberg uncertainty  principle with  curl‑free vector fields.   The stability of the Heisenberg uncertainty principle with monomial weights was established by \cite{MR5013418}. In \cite{lam2026heisenberguncertaintyprinciplehalf}, the authors proved the sharp
 Heisenberg uncertainty principle on the half spaces or orthants and also established the corresponding stability estimates.

There are also some results concerning the  Heisenberg uncertainty principle on manifolds. Krist\'aly \cite{MR3862150} examined the influence of curvature on the uncertainty principle and studied the sharp  Heisenberg uncertainty principle on both Cartan–Hadamard manifolds and manifolds with non‑negative Ricci curvature. The sharp Heisenberg uncertainty principle on Finsler manifolds was established in \cite{MR4169684}. In \cite{MR4417395}, the authors gave the first example of second order Heisenberg uncertainty principle in the hyperbolic space. For CKN inequalities on manifolds with negative curvature, we refer to \cite{MR4591784,MR4803460,MR4383243,MR4341617}.

As for the hydrogen uncertainty principle, a second‑order version was given in \cite{MR4470731}  as follows:
\begin{align*}
    \int_{\RR^N}\left|\Delta u\right|^2\md x\int_{\RR^N}\left|\nabla u\right|^2\md x\geq \frac{(N+1)^2}{4}\left(\int_{\RR^N}\frac{|\nabla u|^2}{|x|}\md x\right)^2\quad\text{for $N\geq 5$ and $u\in W^{2,2}\left(\RR^N\right)$},
\end{align*}
where the extremal functions are given by $u(x)=(1+b|x|)\mathrm{e}^{-b|x|}$ for $b>0$. In \cite{MR4470731}, it was conjectured that this result also holds for $2\leq N\leq 4$. Recently, the conjecture was shown to be false for $N=2,\,3$ by Chen and Tang \cite{chen2025extremalfunctionssecondorder}, while Huang and Ye \cite{huang2025sharpheisenberguncertaintyprinciple} proved its validity for $N=4$ and gave a unified proof for $N\geq4$. In addition, it was proved in \cite{MR4470731} that if the derivative is replaced by the radial derivative (see \eqref{def of radial derivative}), then the hydrogen uncertainty principle holds for all $N\geq2$.

In this paper, we primarily aim to establish higher-order uncertainty principles and to investigate their best constants. Furthermore, we consider the vector field setting and provide an answer to the higher-order version of Maz’ya’s Open Problem 9 in \cite{MR3795177} for the case $N=2$. Inspired by \cite{MR4470731}, we observe that the best constant increases from $\dfrac{N^2}{4}$
 to $\dfrac{(N+2)^2}{4}$ when passing from the first-order Heisenberg uncertainty principle to the second-order Heisenberg  uncertainty principle. A natural question then arises: for higher-order Heisenberg  uncertainty principles, will the corresponding optimal constants continue to increase? Before presenting the main conclusions, we utilize the Cauchy inequality to deduce the form of the higher-order Heisenberg uncertainty principle. From now on, we define $\Delta^0u=u$ and  $\Delta^j=\Delta\left(\Delta^{j-1}\right)$ for nonnegative integer $j$. We set
\begin{align}
\label{def of Pm}
    P_m u=\begin{cases}\Delta^j u\,,\qquad&\text{ if }m=2j,\\
    \nabla\Delta^j u\,,&\text{ if }m=2j+1.
    \end{cases}
\end{align}
 For $k\in\NN_+$, let $\nabla^k u$ denote the $k$-th rank tensor consisting of all $k$-th order partial derivatives of $u$ and $\nabla^0u=u$. Let $|\nabla^k u|$ denote the Frobenius norm defined as the square root of the sum of the squares of the elements of $\nabla^k u$. Suppose that for $u\in C^\infty_0\left(\RR^N\right)$, the following m-th order Heisenberg uncertainty principle holds:
\begin{align*}
    \int_{\RR^N}|P_mu|^2\md x\int_{\RR^N}|x|^2|\nabla^{m-1}u|^2\md x\geq \mu_m(N)\left(\int_{\RR^N}|P_{m-1}u|^2\md x\right)^2.
\end{align*}
For $\vec{U}=\left(U_1,\,\dots,\,U_N\right)\in C^\oo_0\left(\RR^N;\,\RR^N\right)$, define
\begin{equation*}
    P_m\vec{U}=\left(P_m U_1,\,\dots,\,P_m U_N\right),\qquad \nabla^{m-1}\vec{U}=\left(\nabla^{m-1}{U_1},\,\dots,\,\nabla^{m-1}{U_N}\right).
\end{equation*}
Then we have
\begin{align*}
    \int_{\RR^N}\left|P_m\vec{U}\right|^2\md x\int_{\RR^N}|x|^2\left|\nabla^{m-1}\vec{U}\right|^2\md x&=\left(\sum_{i=1}^N\int_{\RR^N}\left|P_m U_i\right|^2\md x\right)\left(\sum_{i=1}^N\int_{\RR^N}|x|^2\left|\nabla^{m-1}{U_i}\right|^2\md x\right)\\
    &\geq\left(\sum_{i=1}^N\sqrt{\int_{\RR^N}\left|P_m U_i\right|^2\md x\int_{\RR^N}|x|^2\left|\nabla^{m-1}{U_i}\right|^2\md x}\right)^2\\
    &\geq \mu_m(N)\left(\sum_{i=1}^N\int_{\RR^N}\left|P_{m-1}U_i\right|^2 \md x\right)^2\\
    &=\mu_m(N)\left(\int_{\RR^N}\left|P_{m-1}\vec{U}\right|^2 \md x\right)^2.
\end{align*}
If we replace $\vec{U}$ by $\nabla u$ for $u\in C^\oo_0\left(\RR^N\right)$, then we obtain the (m+1)-th order Heisenberg  uncertainty principle:
\begin{align*}
     \int_{\RR^N}|P_{m+1}u|^2\md x\int_{\RR^N}|x|^2|\nabla^{m}u|^2\md x\geq \mu_m(N)\left(\int_{\RR^N}|P_{m}u|^2\md x\right)^2.
\end{align*}
From now on, we denote by $\mathcal{H}^m$ the completion of  $C^\oo_0\left(\RR^N\right)$ with respect to the seminorm
\begin{align}
\label{def of seminorm}
   \lVert u\rVert_{\mathcal{H}^m}^2= \int_{\RR^N}|P_mu|^2\md x+\int_{\RR^N}|x|^2|\nabla^{m-1}u|^2\md x.
\end{align}
The first result of this paper is the following inequality, which can be regarded as a higher-order version of Theorem 2.1 in \cite{MR4470731}.
\begin{theorem}
\label{thm m order HUP}
	For $N,\,m\in\NN_+$, $m\geq2$ and $u\in\mathcal{H}^m$, we have the following inequality
	\begin{equation}
	\label{eq m order HUP}
	    \int_{\RR^N}|P_mu|^2\md x\int_{\RR^N}|x|^2|\nabla^{m-1}u|^2\md x\geq \mu_m(N)\left(\int_{\RR^N}|P_{m-1}u|^2\md x\right)^2,
	\end{equation}
	where $P_m$ is defined by \eqref{def of Pm} and the constant $\mu_m(N)$ is given by \begin{align}
	\label{def of mu m N}
	    \mu_m(N)=\left(\frac{2+\sqrt{N^2+4(m-2)(N-1)}}{2}\right)^2.
	\end{align}
\end{theorem}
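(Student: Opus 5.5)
The plan is to run the ``expanding the squares'' method on a single carefully chosen nonnegative quadratic expression, then optimize over a free parameter; the square root in \eqref{def of mu m N} should come out of that optimization. By density it suffices to treat $u\in C^\infty_0(\RR^N)$ and pass to $\mathcal{H}^m$ at the end. We then split according to the parity of $m$, since $P_m$ alternates between $\Delta^j$ and $\nabla\Delta^j$, and write $w=\Delta^{j}u$ (respectively $w=\nabla\Delta^{j-1}u$, componentwise) so that $P_{m-1}u$ and $P_m u$ become first/second derivatives of a lower object. The basic step is the first-order identity
\begin{align*}
\int_{\RR^N} |P_{m-1}u|^2\md x = -\frac{2}{N+2(m-2)+\dots}\int_{\RR^N} P_m u\cdot \big(x\cdot\nabla\big)(\cdots)\md x ,
\end{align*}
obtained by integrating by parts the dilation vector field $x\cdot\nabla$ against $P_{m-1}u$. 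The exact constant is to be determined; it is the scaling weight of $P_{m-1}u$.

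\textbf{Step 1.} Convert the weight term $\int|x|^2|\nabla^{m-1}u|^2$ into quantities comparable with $\int |x|^2|P_{m-1}u|^2$ plus correction terms. For this we integrate by parts repeatedly, commuting derivatives past $|x|^2$. For a second-derivative tensor this reads
\begin{align*}
\int |x|^2|\nabla^2 w|^2\md x=\int |x|^2|\Delta w|^2\md x + (\text{terms like } \textstyle\int x\cdot\nabla w\,\Delta w,\ \int |\nabla w|^2),
\end{align*}
and we iterate it down the tensor ranks. The correction terms carry factors $2$ and $N$ at each of the $m-2$ levels. I expect these terms to produce the $4(m-2)(N-1)$ contribution, the $N-1$ arising from the angular (trace-free) part of the Hessian. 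For this reason I would in fact decompose $\nabla^{m-1}u$ into its radial part $x\cdot\nabla$ and its tangential part, and keep the two separately rather than bounding $|P_{m-1}u|^2\le N|\nabla^{m-1}u|^2$ crudely, since the crude bound loses sharpness.

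\textbf{Step 2.} Expand $0\le\int|P_m u + t\,x\,\Phi|^2$, where $\Phi$ is a combination $\Phi=\alpha P_{m-1}u+\beta(\text{tangential part})$ and the contraction is the appropriate one. The cross term is computed via the dilation identity and yields a multiple of $\int|P_{m-1}u|^2$, while the square of the second term is controlled by $\int|x|^2|\nabla^{m-1}u|^2$ using Step 1. Optimizing first over $t$ gives an inequality of the form $AB\ge \phi(\alpha,\beta)^2 C^2$, with $A=\int|P_m u|^2$, $B=\int|x|^2|\nabla^{m-1}u|^2$ and $C=\int|P_{m-1}u|^2$. Maximizing $\phi$ over the ratio $\alpha/\beta$ is then a quadratic optimization. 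Its discriminant should be $N^2+4(m-2)(N-1)$, giving $\phi_{\max}=\frac{2+\sqrt{N^2+4(m-2)(N-1)}}{2}$. As checks, the choice $m=2$ gives $(N+2)/2$, matching the known second-order result of \cite{MR4470731}, and the formal value $m=1$ gives $N/2$.

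\textbf{Main obstacle.} The main obstacle is Step 1 together with the choice of $\Phi$ in Step 2. We must find the right tensor contraction and the right mixing of radial and tangential parts so that all leftover cross terms either cancel or have a sign, leaving exactly the quadratic whose optimum is the stated $\mu_m(N)$. I would first carry this out for $m=3$ by hand to identify the coefficient pattern, and then set up an induction on $m$. The inductive step passes from $u$ to $\nabla u$, as in the Cauchy-inequality reduction in the introduction, but it tracks the exact correction terms rather than discarding them.
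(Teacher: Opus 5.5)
Your Step 1 is correct, and it is exactly Lemma~\ref{integral identity 3} of the paper. Iterating the exact identity $\int_{\RR^N}|x|^2|\nabla^2 w|^2\md x=\int_{\RR^N}|x|^2|\Delta w|^2\md x+2(N-1)\int_{\RR^N}|\nabla w|^2\md x$ gives
\[
\int_{\RR^N}|x|^2|\nabla^{m-1}u|^2\md x=\int_{\RR^N}|x|^2|P_{m-1}u|^2\md x+2(N-1)\Big\lfloor\frac{m-1}{2}\Big\rfloor D,\qquad D:=\int_{\RR^N}|P_{m-2}u|^2\md x,
\]
with no loss, so no radial/tangential splitting is needed.

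The gap is in Step~2, which you leave open and which is the whole proof. The correction above is a multiple of $D$, a quantity one order below your $A,B,C$. To gain from it, your expansion must contain a $D$-term with a tunable coefficient. Your square $P_mu+t\,x\Phi$, with $\int|t\,x\Phi|^2$ bounded by $B$ via Step~1, simply discards $D$. With $\Phi=\alpha P_{m-1}u$ this gives at best:
\begin{itemize}
\item $\sqrt{AB}\ge\frac{N-2}{2}C$ for even $m$, because $2\int(x\cdot\nabla v)\Delta v\,\md x=(N-2)\int|\nabla v|^2\md x$;
\item $\sqrt{AB}\ge\frac N2 C$ for odd $m$.
\end{itemize}
For $m=2$ there is nothing else to mix in, since $x\cdot(\nabla u)_{\mathrm{tan}}=0$. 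So your scheme does not even reproduce the value $\frac{N+2}{2}$ that you quote as a check. Note also that the dilation identities do not depend on $m$; all the $m$-dependence enters through Step~1.

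The missing idea is to put an \emph{unweighted lower-order term} into the square. For $m=2j$, take $v=\Delta^{j-1}u=P_{m-2}u$ (not $\Delta^j u$) and expand $0\le\int|\Delta v+b\,x\cdot\nabla v+bc\,v|^2\md x$. Using $(x\cdot\nabla v)^2\le|x|^2|\nabla v|^2$ and Step~1, you get
\[
0\le A+b^2B+b(N-2-2c)\,C+b^2\bigl(c^2-cN-(m-2)(N-1)\bigr)D .
\]
The square root comes from this quadratic in $c$, not from an optimization over $\alpha/\beta$. Its positive root $c_e$ kills the $D$-term and leaves $b(2c_e-N+2)C\le A+b^2B$, where $2c_e-N+2=2+\sqrt{N^2+4(m-2)(N-1)}$; then optimize in $b$.

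For $m=2j+1$, take $v=\Delta^{j-1}u$ and the square $\nabla\Delta v+b(\Delta v)x+bc\nabla v$. It produces the term $-b(N+2c)C$ and the term $b^2\bigl(c^2+c(N-2)-(m-1)(N-1)\bigr)D$. The positive root $c_o$ gives $N+2c_o=2+\sqrt{N^2+4(m-2)(N-1)}$.

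With $c=0$ (and $b$ of either sign), this is just your ansatz with $\Phi=\alpha P_{m-1}u$. That is why your ansatz only reaches the weak bounds above, and why the $bc$-term is indispensable.
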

Furthermore, we prove that the constant $\mu_m(N)$ appearing in \eqref{eq m order HUP} is optimal and can be attained. In order to obtain the form of the extremal function, we need to use the confluent hypergeometric function defined as follows (see \cite{MR3586206}):
\begin{align*}
    _{1}F_{1}(\alpha,\,\gamma,\,z)\coloneqq\sum_{n=0}^\oo \frac{(\alpha)_n}{(\gamma)_n}\frac{z^n}{n!}\qquad\left(z\in\CC;\,\gamma\not=0,\,-1,\,-2,\,\dots\right),
\end{align*}
where $(\alpha)_n$ and $(\gamma)_n$ are Pochhammer symbols. It is known that ${_{1}F_{1}(\alpha,\,\gamma,\,z)}$ is an entire function with respect to $z$. For a detailed definition and the properties of the hypergeometric function $_1F_1$, we refer to Section \ref{sec 2}.
\vspace{1cm}
\begin{theorem}
\label{extremal}
For $N\geq1$ and $m\geq2$, the constant $\mu_m(N)$ in \eqref{eq m order HUP} is optimal. Moreover, if $N\geq2$, then  $\mu_m(N)$ is attained by some radially symmetric function $\Phi_{b,m}\in \mathcal{H}^m$ satisfying
\begin{align*}
    P_{m-2}\Phi_{b,m}(x)=\begin{cases}
    \mathrm{e}^{-\frac{br^2}{2}}{_1F_1}\left(\dfrac{N-\sqrt{N^2+4(m-2)(N-1)}}{4},\,\dfrac{N}{2},\,\dfrac{br^2}{2}\right),\,&\text{if $m$ is even,}\\
     x\mathrm{e}^{-\frac{br^2}{2}}{_1F_1}\left(\dfrac{2+N-\sqrt{N^2+4(m-2)(N-1)}}{4},\,\dfrac{N+2}{2},\,\dfrac{br^2}{2}\right),\,&\text{if $m$ is odd,}
    \end{cases}
\end{align*}
\text{where} $r=|x|$ and $b>0$. If $N=1$, then $\mu_m(N    )$ is attained by some function $\Phi_{b,m}\in\mathcal{H}^m$ such that $P_{m-2}\Phi_{b,m}=\mathrm{e}^{-br^2}$ for $b>0$.
\end{theorem}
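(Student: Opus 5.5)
The plan is to reduce the question to a radial ODE in the variable $v=P_{m-2}\Phi$. We then verify that the displayed $_1F_1$ profiles make every inequality in the proof of Theorem \ref{thm m order HUP} an equality.

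\textbf{Structure of the proof of Theorem \ref{thm m order HUP}.} That proof writes $v=P_{m-2}u$, a scalar if $m$ is even and a gradient field if $m$ is odd. By "expanding the squares" it establishes, for every $t>0$,
\begin{align*}
\int_{\RR^N}|P_mu|^2\md x+t^2\int_{\RR^N}|x|^2|\nabla^{m-1}u|^2\md x\ge 2t\sqrt{\mu_m(N)}\int_{\RR^N}|P_{m-1}u|^2\md x .
\end{align*}
The key sub-step controls the tensor term through a weighted Rellich/Hardy-type bound of the form
\begin{align*}
\int|x|^2|\nabla^{m-1}u|^2\ge \int|x|^2|\nabla P_{m-2}u|^2+(m-2)(N-1)\int |P_{m-2}u|^2\cdots .
\end{align*}
This is where $4(m-2)(N-1)$ enters the discriminant, and $\sqrt{\mu_m}$ arises as the positive root of a quadratic. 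Equality requires two things. First, equality in that tensor bound, which I expect to hold for radial $u$, or for $v$ of the form $x\,g(r)$ when $m$ is odd. Second, vanishing of the expanded square, which has the form
\begin{align*}
\Delta v+t\,x\cdot\nabla v+\lambda\, t\, v=0
\end{align*}
with $\lambda$ fixed by $\mu_m(N)$. Substituting $v=\mathrm{e}^{-br^2/2}w(br^2/2)$ with $t=b$ turns this into Kummer's equation $zw''+(\gamma-z)w'-\alpha w=0$, where $\gamma=N/2$. Equating $\alpha$ with the root of the indicial quadratic gives $\alpha=\frac{N-\sqrt{N^2+4(m-2)(N-1)}}{4}$. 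In the odd case the ansatz $v=x\,\mathrm{e}^{-br^2/2}w$ raises the effective dimension by $2$, so $\gamma=(N+2)/2$ and $\alpha$ is shifted by $1/2$.

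\textbf{Steps for $N\ge 2$.}
\begin{enumerate}
\item Check directly that the given $v$ solves the Euler--Lagrange equation, using the $_1F_1$ facts in Section \ref{sec 2}.
\item Obtain integrability from the asymptotics ${_1F_1}(\alpha,\gamma,z)\sim \frac{\Gamma(\gamma)}{\Gamma(\alpha)}\mathrm{e}^z z^{\alpha-\gamma}$. Then $v\sim c\,r^{-N/2-\sqrt{\cdot}/2}$ is a pure power decay, since $\alpha<0$ but $\alpha$ is not a negative integer generically. If $\alpha$ is a nonpositive integer, $v$ is a Gaussian times a polynomial and decays even faster.
\item Reconstruct $\Phi_{b,m}$ from $v$ by inverting $\Delta$ radially, with $\Phi=\Delta^{-j}v$ via the Newton potential, or the analogous construction in the odd case. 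Show $\Phi_{b,m}\in\mathcal{H}^m$: the quantities $\int|P_m\Phi|^2$, $\int|P_{m-1}\Phi|^2$ and $\int|x|^2|\nabla^{m-1}\Phi|^2$ are finite because $\nabla^{m-1}\Phi$ is controlled by $\nabla v$ plus lower-order radial terms with sufficient decay.
\item Approximate by cutoffs to justify the integrations by parts, so that equality is actually attained.
\end{enumerate}

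\textbf{Steps for $N=1$.} Here $4(m-2)(N-1)=0$, so $\mu_m=(1+1/2)^2$, the second-order constant. Take $P_{m-2}\Phi=\mathrm{e}^{-br^2}$, the Gaussian extremal of the one-dimensional second-order Heisenberg inequality. In one dimension $|\nabla^{m-1}u|=|P_{m-1}u|$, so the reduction is exact.

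\textbf{Optimality without attainment.} Optimality for all $N$ follows from the attainment above. If needed, it can also be argued directly by truncating the extremal.

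\textbf{Main obstacle.} I expect the hardest step to be proving that the tensor inequality is an equality for these radial profiles. The second difficulty is the decay, and hence membership $\Phi_{b,m}\in\mathcal{H}^m$, since $v$ decays only polynomially; one must check that $r^{-N/2-\sqrt{\cdot}/2}$ decay suffices. My first approach would be to write $\nabla^{m-1}$ of a radial function in terms of $f^{(k)}(r)$ and $x_i/r$, and compute the Frobenius norm exactly.
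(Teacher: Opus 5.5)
Your formal reduction is correct and matches the paper. The square to annihilate is $\mathcal{E}_{b,c_e}(v)$ (resp.\ $\mathcal{O}_{b,c_o}$), and your Kummer parameters agree with Proposition~\ref{expression of v}, including the odd-case shift $\gamma=\frac{N+2}{2}$, $\alpha\mapsto\alpha+\frac12$.

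The obstacle you single out, however, is not one. Lemma~\ref{integral identity 3} is an exact identity for every $u\in C^\oo_0(\RR^N)$, so the tensor term has no equality case to analyse. Apart from $\int|\mathcal{E}|^2\md x\ge0$, the only inequality used is the pointwise bound $(x\cdot\nabla v)^2\le|x|^2|\nabla v|^2$ in the even case, which is trivially an equality for radial $v$.

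The real difficulty is the one you pass over in Steps 3--4. $\Phi_{b,m}$ is not compactly supported, so it must be built with enough decay that $\eta_L\Phi_{b,m}\to\Phi_{b,m}$ in $\mathcal{H}^m$ and $\int|\mathcal{E}_b(\Delta^{j-1}(\eta_L\Phi_{b,m}))|^2\md x\to0$. Your justification ("$\nabla^{m-1}\Phi$ is controlled by $\nabla v$ plus lower-order terms with sufficient decay") assumes exactly the decay that must be proved. Moreover, finiteness of the three integrals does not give membership in the completion $\mathcal{H}^m$. Nor does it let you apply the integral identities, which are proved only for $C^\oo_0$ functions, to $\Phi_{b,m}$.

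The construction you propose does not deliver this decay. You need $j-1$ inversions of $\Delta$, not $j$. But $v$ decays only like $r^{-c_e}$ with $c_e=\frac12\bigl(N+\sqrt{N^2+4(m-2)(N-1)}\bigr)$, which grows like $\sqrt m$, and each inversion costs two powers of decay. So iterated Newton potentials diverge for large $m$. For example, take $N=3$, $m=10$: after two inversions the iterate decays at best like $r^{4-c_e}\approx r^{-1.77}$, and the third Newton integral diverges.

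Inverting radially from the origin, as the paper does with $\mathcal{I}$ in \eqref{def of I}, always works. However, $\mathcal{I}^{j-1}[v]$ contains at infinity an even polynomial part $Q_{2j-4}$, which must be subtracted. The subtraction is harmless, since $\Delta^{j-1}Q_{2j-4}=0$ and $\nabla^{m-1}Q_{2j-4}=0$, but it is necessary. For even $m=2j$, cutting off $r^{2j-4}$ produces $\nabla^{m-1}$-errors of size $L^{-3}$ on $\{L\le|x|\le2L\}$. That gives a contribution of order $L^{N-4}$ to $\int|x|^2|\nabla^{m-1}(\cdot)|^2\md x$, which does not vanish once $N\ge4$.

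After the subtraction you still need the bookkeeping of Lemma~\ref{estimate of I} and Proposition~\ref{expression of Ik}, including the logarithmic terms coming from $H(r)$. This yields $D^\beta\Phi_{b,m}=O\bigl(r^{m-2-N-|\beta|}\ln^\sigma r\bigr)$. That estimate is what makes all cutoff errors vanish (Lemma~\ref{cut off of Phi}), and it gives both optimality and $\Phi_{b,m}\in\mathcal{H}^m$.

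The same point arises, more mildly, for $N=1$. There $\Phi_{b,m}$ grows like $|x|^{m-3}$, and you need the bound of Lemma~\ref{estimate of Lk} before the cutoff argument shows $\Phi_{b,m}\in\mathcal{H}^m$.
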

\begin{remark}
  In the special case $m=2$, the hypergeometric function $_1F_1$ reduces to a constant. Consequently, the extremal functions $\Phi_{b,m}$ are precisely the Gaussian profile. This coincides with the result related to the second-order Heisenberg uncertainty principle in  \cite{MR4470731}.
\end{remark}

It is worth mentioning that, in order to prove Theorem \ref{thm m order HUP} and Theorem \ref{extremal}, we do not use the spherical harmonics decomposition method employed in \cite{MR4470731}, since the computations would become cumbersome as the order increases. Instead, we adopt the so‑called “expanding the squares” method which greatly simplifies the computations. This idea has also been employed in \cite{MR4315551,MR4565040,MR2356072,MR4902771} to handle integrals involving the first and second order derivatives. We shall present a method for treating the higher-order case. More precisely, for a given order $m$, we define the operators $\mathcal{E}_{b,c}$ and $\mathcal{O}_{b,c}$ as in \eqref{def of Ebc} and \eqref{def of Obc}, respectively, where $\mathcal{E}_{b,c}$ corresponds to even $m$ and $\mathcal{O}_{b,c}$  corresponds to odd $m$. Then, by  expanding the corresponding $L^2$ norms and using the crucial integral identities given in Section \ref{sec 2}, we overcome the difficulties caused by integrals of higher-order derivatives and prove Theorem \ref{thm m order HUP}. As for the existence of extremal functions, the method of expanding the squares leads us to begin from $\mathcal{E}_{b,c}(v)=0$ and $\mathcal{O}_{b,c}(v)=0$. By constructing the corresponding solutions and analyzing their asymptotic behavior carefully, we establish the existence of extremal functions.

Analogous to the treatment in \cite{MR3528527,MR4470731,MR4565040,MR5008481}, we can regard the smooth vector field $\vec{U}\in C^\oo_0\left(\RR^N;\,\RR^N\right)$ as a 1-form $U=\sum_{i=1}^N U_i \md x_i$. In addition, when $N\geq2$, we may use the exterior derivative to define a 2-form
\begin{align*}
    \mathrm{curl }\,\vec{U}=\md U=\sum_{i=1}^N\sum_{j=1}^N \left(\frac{\partial U_i}{\partial x^j}\md x_j\right)\wedge \md x_i. 
\end{align*}
If $\md U=0$, then $U$ is called a  closed 1-form. By Poincar\'{e} lemma, $U$ is a closed 1‑form if and only if it is exact, i.e., there exists a function $u\in C^\oo_0\left(\RR^N\right)$ such that $\vec{U}=\nabla u$. Consequently, by Theorem \ref{thm m order HUP} and Theorem \ref{extremal}, we have the following result concerning higher‑order uncertainty principles for curl‑free vector fields.
\begin{corollary}
\label{cor m order HUP}
Let $N,\,m\in\NN_+$ with $m\geq2$, and let $\vec{U}\in C^\oo_0\left(\RR^N;\,\RR^N\right)$. If $N\geq2$, we additionally require that  $\mathrm{curl }\,\vec{U}=0$, or equivalently, $U=\sum_{i=1}^N U_i \md x_i$ is a closed $1$-form. Then we have the following inequality
	\begin{equation}
	\label{equation of cor m HUP}
	    \int_{\RR^N}|P_{m-1}\vec{U}|^2\md x\int_{\RR^N}|x|^2|\nabla^{m-2}\vec{U}|^2\md x\geq \mu_{m}(N)\left(\int_{\RR^N}|P_{m-2}\vec{U}|^2\md x\right)^2,
	\end{equation}
	where $\mu_m(N)$ is defined by \eqref{def of mu m N} and is optimal in \eqref{equation of cor m HUP}.
\end{corollary}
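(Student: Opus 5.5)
The plan is to reduce Corollary \ref{cor m order HUP} to Theorem \ref{thm m order HUP} and Theorem \ref{extremal} by writing the vector field as a gradient.

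\emph{Step 1 (reduction to a scalar potential).} For $N\geq2$, the hypothesis $\mathrm{curl}\,\vec{U}=0$ means that $U$ is a closed $1$-form. By the Poincar\'e lemma on $\RR^N$ there is a smooth $u$ with $\vec{U}=\nabla u$, namely $u(x)=\int_0^1 x\cdot\vec{U}(tx)\md t$. For $N=1$ no hypothesis is needed: put $u(x)=\int_{-\oo}^x U_1(s)\md s$.

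\emph{Step 2 (compact support of $u$).} Since $\nabla u=0$ outside a large ball $B_R$, $u$ is constant on the connected set $\RR^N\setminus B_R$ when $N\geq2$. For $N=1$, $u$ vanishes near $-\oo$ and equals $\int_{\RR}U_1$ near $+\oo$. In either case we subtract a constant, or otherwise argue that only derivatives of $u$ enter, so that $u\in\mathcal{H}^m$. One way to do this is to multiply $u-c$ by a cutoff and note that all seminorm terms involve derivatives of order $\geq1$. When $N=1$ and $\int U_1\neq0$, I would instead apply the inequality directly to $u$, which has compactly supported derivative; this is then done by density.

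\emph{Step 3 (index bookkeeping).} With $\vec U=\nabla u$ we have:
\begin{itemize}
\item $P_{m-1}\vec U=(P_{m-1}\partial_iu)_i$, whose total squared norm equals $\int|P_m u|^2$. This follows by integrating by parts: if $m-1$ is even, $\sum_i|\Delta^j\partial_i u|^2$ is exactly $|\nabla\Delta^j u|^2=|P_mu|^2$; if $m-1=2j+1$, then $\sum_i\int|\nabla\Delta^j\partial_iu|^2=\int|\nabla^2\Delta^j u|^2=\int|\Delta^{j+1}u|^2$.
\item Similarly, $\int|P_{m-2}\vec U|^2=\int|P_{m-1}u|^2$.
\item $|\nabla^{m-2}\vec U|=|\nabla^{m-1}u|$ pointwise.
\end{itemize}
Hence \eqref{equation of cor m HUP} becomes exactly \eqref{eq m order HUP}.

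\emph{Step 4 (optimality).} Optimality is the main obstacle, because the corollary quantifies over compactly supported curl-free fields while the extremal $\Phi_{b,m}$ is not compactly supported. Take $\vec U_k=\nabla(\chi_k\,\Phi_{b,m})$ for $N\geq2$, and $\nabla\chi_k*$mollified approximants of $\Phi_{b,m}$ for $N=1$; for $N=1$ the optimality in Theorem \ref{extremal} directly provides near-optimal elements. Here $\chi_k$ is a cutoff at scale $k$. These fields are curl-free and in $C_0^\oo$. Using the Gaussian decay of $\Phi_{b,m}$ and its derivatives (from the asymptotics of ${_1F_1}$), all three integrals converge to their values at $\Phi_{b,m}$. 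The quotient therefore tends to $\mu_m(N)$, which proves optimality. Alternatively, one can use that $C_0^\oo$ is dense in $\mathcal H^m$ by definition, together with continuity of each integral in the $\mathcal H^m$ seminorm plus the $L^2$ norm of $P_{m-1}u$; the latter is controlled by interpolation, which itself follows from the theorem.
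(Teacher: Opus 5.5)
\textbf{Main gap: the case $N=1$.} For $N\geq2$ your Steps 1--3 are correct and coincide with the paper's argument: Poincar\'e lemma, then $u$ is constant outside a ball so that $u-c\in C^\oo_0(\RR^N)$, then the bookkeeping of Lemma~\ref{integral identity 1}, then Theorem~\ref{thm m order HUP}. The genuine gap is the case $N=1$ with $\int_\RR U_1\md x\neq0$, where you propose to apply the inequality to the primitive $u$ ``by density''. For $m=2$ this cannot be repaired, because the asserted inequality is then false. It reads
\[
\int_\RR|U_1'|^2\md x\int_\RR x^2U_1^2\md x\geq\frac94\Bigl(\int_\RR U_1^2\md x\Bigr)^2 ,
\]
and $U_1=\eta_L\mathrm{e}^{-x^2}$ gives a quotient tending to $\frac14$ (the first-order constant $N^2/4$) as $L\to+\oo$.

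The density step breaks for a structural reason. The functional $g\mapsto\int_\RR g\md x$ is continuous for the norm $\bigl(\int|g'|^2+\int x^2g^2\bigr)^{1/2}$: bound $\sup_{[-1,1]}|g|$ by $\|g'\|_{L^2}$ and $(\int_1^2g^2)^{1/2}$, and use $\int_{|x|>1}|g|\le\sqrt2\,(\int x^2g^2)^{1/2}$. Hence a field with nonzero mean is never a limit of $\phi_n'$ with $\phi_n\in C^\oo_0(\RR)$. So the primitive is not in $\mathcal H^2$, and \eqref{expansion of E} with $v=u$ would contain $\int u^2=+\oo$.

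The paper's own proof has the same defect: it asserts that the primitive lies in $C^\oo_0$ ``also for $N=1$'', which fails whenever $\int_\RR U_1\md x\neq0$. So for $N=1$, $m=2$ the statement needs the extra hypothesis $\int_\RR U_1\md x=0$, after which your Step 3 applies verbatim. For $N=1$, $m\geq3$ the statement is true, but the correct argument avoids primitives altogether. In one dimension $P_k=\nabla^k=\frac{\md^k}{\md x^k}$ and $\mu_{m-1}(1)=\mu_m(1)=\frac94$, so \eqref{equation of cor m HUP} is literally Theorem~\ref{thm m order HUP} of order $m-1\geq2$ applied to $U_1\in C^\oo_0(\RR)$.

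\textbf{Step 4.} Your appeal to ``Gaussian decay of $\Phi_{b,m}$ and its derivatives'' is wrong for $m\geq3$. By Lemma~\ref{asymptotic of v}, $P_{m-2}\Phi_{b,m}$ in general decays only polynomially: $v_e=O(r^{-c_e})$ and $|\nabla v_o|=O(r^{1-N-c_o})$. Moreover $\Phi_{b,m}=O(r^{m-2-N}\ln^\sigma r)$ may even grow. That the truncations $\eta_L\Phi_{b,m}$ still have the right limiting quotient is precisely the content of Lemma~\ref{estimate of Phi} and Propositions~\ref{sharpness} and~\ref{sharpness when N=1}.

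Your fallback is sound, though. Optimality in Theorem~\ref{extremal}, density of $C^\oo_0$ in $\mathcal H^m$, and the bound $\int|P_{m-1}u|^2\le(2\sqrt{\mu_m(N)})^{-1}\|u\|_{\mathcal H^m}^2$ furnished by the inequality itself together give $u_n\in C^\oo_0$ with quotient tending to $\mu_m(N)$. The fields $\vec U_n=\nabla u_n$ are then admissible, which is what the paper does with $u_L=\eta_L\Phi_{b,m}$.
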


When $N=2$, the divergence-free vector field $\vec{U}\in C^\oo_{0}\left(\RR^2;\,\RR^2\right)$ can be expressed in terms of a function $u\in C^\oo_0\left(\RR^2\right)$ as  $\vec{U}=\left(\dfrac{\pt u}{\pt x_2},\,-\dfrac{\pt u}{\pt x_1}\right)$. In this case, we can give an answer to the higher-order version of Open Problem 9 raised by Maz'ya in \cite{MR3795177}.
\begin{corollary}
\label{cor Mazya}
Let $N=2$, $m\in\NN_+$ and $m\geq2$. For any $\vec{U}\in C^\oo_0\left(\RR^2;\,\RR^2\right)$ satisfying $\mathrm{div}\,\vec{U}=0$, we have
	\begin{equation}
	\label{equation of cor Mazya}
	    \int_{\RR^N}|P_{m-1}\vec{U}|^2\md x\int_{\RR^N}|x|^2|\nabla^{m-2}\vec{U}|^2\md x\geq \mu_m(2)\left(\int_{\RR^N}|P_{m-2}\vec{U}|^2\md x\right)^2.
	\end{equation}
	Moreover, the constant $\mu_m(2)=\left(1+\sqrt{m-1}\right)^2$ in \eqref{equation of cor Mazya} is optimal.
\end{corollary}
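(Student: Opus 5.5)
The plan is to reduce Corollary \ref{cor Mazya} to Corollary \ref{cor m order HUP} (equivalently, to Theorem \ref{thm m order HUP} and Theorem \ref{extremal} with $N=2$) using the planar rotation $J(a_1,a_2)=(a_2,-a_1)$. Given $\vec{U}\in C^\oo_0\left(\RR^2;\,\RR^2\right)$ with $\mathrm{div}\,\vec{U}=0$, set $\vec{V}=(-U_2,\,U_1)$, so that $\vec{U}=J\vec{V}$. Then
\begin{align*}
\mathrm{curl}\,\vec{V}=\frac{\pt U_1}{\pt x_1}+\frac{\pt U_2}{\pt x_2}=\mathrm{div}\,\vec{U}=0 ,
\end{align*}
so $\vec{V}$ is curl-free. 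By the Poincar\'e lemma quoted before Corollary \ref{cor m order HUP}, $\vec{V}=\nabla u$ for some $u\in C^\oo_0\left(\RR^2\right)$; compact support of $u$ follows because the primitive is constant outside a large ball, and we normalize that constant to $0$. This recovers the representation $\vec{U}=\left(\frac{\pt u}{\pt x_2},-\frac{\pt u}{\pt x_1}\right)$ mentioned in the text.

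Next I check that all three integrals in \eqref{equation of cor Mazya} are unchanged when $\vec{U}$ is replaced by $\vec{V}$. Each operator $P_k$ and $\nabla^{k}$ acts componentwise on vector fields, and each quantity is a sum over components. Applying $J$ only permutes the two components and changes one sign, so the norms $|P_{m-1}\vec{U}|$, $|\nabla^{m-2}\vec{U}|$ and $|P_{m-2}\vec{U}|$ agree pointwise with the corresponding norms for $\vec{V}$. Therefore \eqref{equation of cor Mazya} for $\vec{U}$ is exactly \eqref{equation of cor m HUP} for the curl-free field $\vec{V}$ with $N=2$. Substituting $N=2$ into \eqref{def of mu m N} gives
\begin{align*}
\mu_m(2)=\left(\frac{2+\sqrt{4+4(m-2)}}{2}\right)^2=\left(1+\sqrt{m-1}\right)^2 .
\end{align*}

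For optimality, the map $\vec{V}\mapsto J\vec{V}$ is a bijection between curl-free and divergence-free fields in $C^\oo_0$ that preserves all three functionals. Hence the best constants for the two classes coincide. Since Corollary \ref{cor m order HUP} asserts that $\mu_m(2)$ is optimal among curl-free fields, it is also optimal among divergence-free fields.

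The one point needing care is whether optimality in Corollary \ref{cor m order HUP} is meant over $C^\oo_0$ fields, while the extremal $\Phi_{b,m}$ of Theorem \ref{extremal} lives only in $\mathcal{H}^m$. I would handle this by approximation. Take cutoffs $u_k\in C^\oo_0$ with $u_k\to\Phi_{b,m}$ in the seminorm \eqref{def of seminorm}, and also with $P_{m-1}u_k\to P_{m-1}\Phi_{b,m}$ in $L^2$, which holds by the Gaussian-type decay of $\Phi_{b,m}$. Then the fields $J\nabla u_k$ have quotients converging to $\mu_m(2)$, so no larger constant is possible. In particular the constant is not attained in $C^\oo_0$, only approached.
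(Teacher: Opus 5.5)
Your proof is correct and is essentially the paper's. The paper writes $\vec{U}=\left(\frac{\pt u}{\pt x_2},-\frac{\pt u}{\pt x_1}\right)$ with $u\in C^\oo_0(\RR^2)$ and identifies the three functionals with those of $u$ via Lemma \ref{integral identity 1}; your rotation $J$ of the curl-free case in Corollary \ref{cor m order HUP} is the same bookkeeping. Optimality then comes from applying Proposition \ref{sharpness} to $u_L=\eta_L\Phi_{b,m}$ and $\vec U_L=\left(\frac{\pt u_L}{\pt x_2},-\frac{\pt u_L}{\pt x_1}\right)$.

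One correction to your last paragraph: $\Phi_{b,m}$ does not in general have Gaussian-type decay. For instance, $\Phi_{b,3}=v_o$ decays only like $r^{2-N-c_o}$, and for $N=2$, $m=5$ the function $\Phi_{b,5}$ grows like $\ln r$. The convergence of the cutoffs must therefore be justified by the polynomial bounds of Lemma \ref{estimate of Phi}, as in Proposition \ref{sharpness} and the proof of Theorem \ref{extremal}.
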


Theorem \ref{thm m order HUP} presents a higher‑order Heisenberg uncertainty principle involving the $(m-1)$-th order tensor $\nabla^{m-1}u$. Next, we give a version of the Heisenberg uncertainty principle that does not involve higher‑order tensors. In this situation, the optimal constant does not keep improving as the order increases. We write  $\mathcal{H}^m_1$ the completion of  $C^\oo_{0}\left(\RR^N\right)$ with respect to the seminorm
\begin{align}
\label{def of seminorm 1}
   \lVert u\rVert_{\mathcal{H}^m_1}^2= \int_{\RR^N}|P_m u|^2\md x+\int_{\RR^N}|x|^2|P_{m-1}u|^2\md x.
\end{align}
\begin{theorem}
\label{thm m order HUP without tensor}
Let $N,\,m\in \NN_+$ such that $N\geq2$ and $m\geq2$. Then we have
\begin{align}
     \int_{\RR^N}|P_mu|^2\md x\int_{\RR^N}|x|^2|P_{m-1}u|^2\md x\geq \mu^\star_m(N)\left(\int_{\RR^N}|P_{m-1}u|^2\md x\right)^2\quad\text{for $u\in\mathcal{H}^m_1$},
     \label{equation m order HUP without tensor}
\end{align}
where $\mu^\star_m(N)=\dfrac{(N+2)^2}{4}$ if $m$ is even and $\mu_m^\star(N)=\dfrac{N^2}{4}$ if $m$ is odd. Moreover, the constant $\mu_m^\star(N)$ is attained by some radially symmetric function $\Phi_{b,m}^\star\in\mathcal{H}^m_1$.
\end{theorem}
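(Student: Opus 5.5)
The plan is to reduce Theorem \ref{thm m order HUP without tensor} to the first- and second-order Heisenberg uncertainty principles by substitution. The key observation is that in \eqref{equation m order HUP without tensor} only $P_m u$ and $P_{m-1}u$ appear, and $P_m u$ is one derivative of $P_{m-1}u$.

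\textbf{Case $m$ odd}, $m=2j+1$. Here $P_mu=\nabla\Delta^j u$ and $P_{m-1}u=\Delta^j u$. Setting $v=\Delta^j u$, the inequality becomes
\begin{align*}
\int_{\RR^N}|\nabla v|^2\md x\int_{\RR^N}|x|^2|v|^2\md x\geq \frac{N^2}{4}\left(\int_{\RR^N}|v|^2\md x\right)^2,
\end{align*}
which is the classical first-order Heisenberg uncertainty principle. We apply it for $u\in C^\oo_0(\RR^N)$ and pass to $\mathcal{H}^m_1$ by density; both sides are continuous in the $\mathcal{H}^m_1$ seminorm, since $\int|v|^2\le(\int|\nabla v|^2)^{1/2}(\int|x|^2|v|^2)^{1/2}\cdot\frac{2}{N}$.

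\textbf{Case $m$ even}, $m=2j+2$ with $j\geq0$. Here $P_mu=\Delta^{j+1}u$ and $P_{m-1}u=\nabla\Delta^j u$. Setting $w=\Delta^j u$, so that $\Delta w=P_mu$ and $\nabla w=P_{m-1}u$, the inequality becomes
\begin{align*}
\int_{\RR^N}|\Delta w|^2\md x\int_{\RR^N}|x|^2|\nabla w|^2\md x\geq\frac{(N+2)^2}{4}\left(\int_{\RR^N}|\nabla w|^2\md x\right)^2.
\end{align*}
This is exactly Theorem \ref{thm m order HUP} with $m=2$, since $\mu_2(N)=\left(\frac{2+N}{2}\right)^2$ and $\nabla^{1}w=P_1w$. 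Density extends it to $\mathcal{H}^m_1$ as before.

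\textbf{Extremals.} This is the main obstacle. We need $\Phi^\star_{b,m}\in\mathcal{H}^m_1$ that is radial and satisfies $P_{m-1}\Phi^\star$ equal to a Gaussian (odd case) or $\nabla$ of a Gaussian (even case, via Theorem \ref{extremal} with $m=2$). So we must solve the polyharmonic-type equation
\begin{align*}
\Delta^j\Phi=\mathrm{e}^{-b|x|^2}\quad\text{(odd case)}, \qquad \Delta^j\Phi=a\mathrm{e}^{-b|x|^2}\quad\text{(even case)},
\end{align*}
with a radial $\Phi$, and then check that the resulting $\Phi$ lies in $\mathcal{H}^m_1$. The plan is to invert the radial Laplacian iteratively by
\begin{align*}
\psi(r)=-\int_r^\oo s^{1-N}\int_0^s t^{N-1}f(t)\md t\md s.
\end{align*}
The iterates need not decay, but the $\mathcal{H}^m_1$ seminorm only involves $P_m\Phi$ and $P_{m-1}\Phi$, which are Gaussians or their gradients and hence have finite weighted norms.

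The delicate point is that $\Phi$ must be approximable by $C^\oo_0$ functions in this seminorm. For this we truncate with a cutoff $\chi(x/R)$ and control the errors. These errors involve lower-order derivatives of $\Phi$, which grow at most polynomially when $N\geq2$. The task is to check that the commutator terms, which carry factors $R^{-k}$, tend to zero. If they do not, the fallback is to approximate $P_{m-1}\Phi$ directly: replace the Gaussian by a compactly supported approximant $g$ with $\Delta^{-j}g$ also compactly supported, i.e.\ $g$ with vanishing moments. We then argue that such $g$ are dense in the relevant weighted space, using $N\ge2$.

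Equality then follows directly from the equality case of the first- or second-order inequality. Attainment thus transfers from the known extremals of those lower-order inequalities.
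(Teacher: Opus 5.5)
Your proof of the inequality is correct. It is the paper's own reduction: substitute $P_{m-1}u$ into the first-order principle for odd $m$ and $P_{m-2}u$ into the second-order one for even $m$, then argue by density. The gap is in the attainment part, which is where the real work lies and which you leave as ``the task is to check''.

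\textbf{The construction of $\Phi^\star$.} Your inversion $\psi(r)=-\int_r^\infty s^{1-N}\int_0^s t^{N-1}f(t)\,dt\,ds$ is in general undefined. For a Gaussian $f$ the inner integral tends to a positive constant, so the outer integrand behaves like $Cs^{1-N}$ and diverges when $N=2$. More generally, since the iterates are sign-definite, the $k$-th iterate is finite only if $N>2k$. You have to integrate from $0$, as with the paper's operator $\mathcal{I}$, and then subtract the even polynomial part of the large-$r$ expansion, which is annihilated by the relevant power of $\Delta$. This subtraction is not cosmetic. For $m=3$ and $N\ge3$, say, the constant $c=\lim_{r\to\infty}\mathcal{I}[\mathrm{e}^{-bt^2}](r)>0$ alone contributes $\int|x|^2|c\,\Delta\chi_R|^2\,dx\sim R^{N-2}$ to the error of the truncation $\chi_R\Phi$, where $\chi_R=\chi(\cdot/R)$.

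\textbf{The truncation fails for odd $m$ and $N=2$.} This is the decisive problem: for $m=2j+1$ and $N=2$ the ordinary truncation $\chi_R\Phi^\star$ does not approximate $\Phi^\star$ in $\mathcal{H}^m_1$.
\begin{itemize}
\item Here $\Phi^\star$ grows like $c\,r^{2j-2}\ln r$ with $c\neq0$. The logarithm reflects $\int\mathrm{e}^{-b|x|^2}\,dx\neq0$ and cannot be removed by adding a polyharmonic polynomial.
\item Already for $m=3$, the term $\Phi^\star\Delta\chi_R$ yields $\int|x|^2|\Delta(\chi_R\Phi^\star-\Phi^\star)|^2\,dx\gtrsim\ln^2R$. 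So along $\chi_R\Phi^\star$ the weighted factor, and hence the quotient, tends to $+\infty$.
\item Your fallback, density of $\{\Delta^jh:\,h\in C^\infty_0\}$ in the norm $\|\nabla g\|_2^2+\||x|g\|_2^2$, is only a restatement of $\Phi^\star\in\mathcal{H}^m_1$. Writing ``using $N\ge2$'' does not prove it.
\item The functional $g\mapsto\int g\,dx$ vanishes on $\Delta^jC^\infty_0$ but not on the Gaussian. At $N=2$ its scaling against this norm is exactly critical, so dilations give nothing and an explicit logarithmic construction is required.
\end{itemize}

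The paper supplies this construction. It uses the cutoff $\eta^\star(\ln|x|/\ln L-1)$, which transitions from $1$ to $0$ on $L\le|x|\le L^2$. It uses the refined expansion $\mathcal{I}^j[\mathrm{e}^{-bt^2}]=\tilde Q_{2j-2}(r)\ln r+Q_{2j-2}(r)+R_j(r)$. It also uses the identities $\Delta^j(r^{2j-2}\ln r)=0$ and $\Delta^j(r^{2j-2}\ln^2r)=Cr^{-2}$. Together these make the weighted error $O(1/\ln L)$.

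In the remaining cases ($m$ even, or $m$ odd with $N\ge3$) the ordinary cutoff does work. It does so only once the precise decay rates of $\Phi^\star$ and its derivatives are established; ``at most polynomial growth'' is not enough.
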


Note that when $N=1$, we have $P_{m-1}u=\nabla^{m-1}u$. Hence, by Theorem  \ref{thm m order HUP} and Theorem \ref{extremal}, the optimal constant in Theorem \ref{thm m order HUP without tensor} for $N=1$ is $\mu_m^\star(1)=\dfrac{9}{4}$. As a corollary of Theorem \ref{thm m order HUP without tensor}, we obtain a higher‑order Heisenberg uncertainty principle with radial derivatives. More precisely, define
\begin{equation}
    \label{def of radial derivative}
    \mathcal{R}_1\coloneqq \frac{x}{|x|}\cdot\nabla,\qquad\mathcal{R}_2\coloneqq \mathcal{R}_1\circ\mathcal{R}_1+\frac{N-1}{r}\mathcal{R}_1.
\end{equation}
Let $C^\oo_{0,\text{rad}}\left(\RR^N\right)=\left\{u\in C^\oo_{0}\left(\RR^N\right):\,u\text{ is radially symmetric}\right\}$. We denote by $\mathcal{H}^m_{2}$ the completion of  $C^\oo_{0,\text{rad}}\left(\RR^N\right)$ with respect to the seminorm
\begin{align}
\label{def of seminorm radial}
   \lVert u\rVert_{\mathcal{H}^m_{2}}^2= \int_{\RR^N}|\mathcal{R}_m u|^2\md x+\int_{\RR^N}|x|^2|\mathcal{R}_{m-1}u|^2\md x,
\end{align}
where
\begin{align}
\label{def of Rm}
    \mathcal{R}_m u=\begin{cases}\mathcal{R}_2^j u\,,\qquad&\text{ if }m=2j,\\
    \mathcal{R}_1\mathcal{R}_2^j u\,,&\text{ if }m=2j+1.
    \end{cases}
\end{align}
Here $\mathcal{R}_2^j\coloneqq\mathcal{R}_2\circ\mathcal{R}_2^{j-1}$ and $\mathcal{R}^0_2u=u.$
\begin{corollary}
\label{cor m order HUP without tensor}
Let $N,\,m\in \NN_+$ such that $N\geq2$ and $m\geq2$. Then we have
\begin{align}
     \int_{\RR^N}|\mathcal{R}_mu|^2\md x\int_{\RR^N}|x|^2|\mathcal{R}_{m-1}u|^2\md x\geq \mu^\star_m(N)\left(\int_{\RR^N}|\mathcal{R}_{m-1}u|^2\md x\right)^2\quad\text{for $u\in\mathcal{H}^m_2$},
     \label{equation cor m order HUP without tensor}
\end{align}
where $\mathcal{R}_m$ is defined by \eqref{def of Rm} and $\mu^\star_m(N)$ is given in Theorem \ref{thm m order HUP without tensor}. Moreover, the constant $\mu_m^\star(N)$ is attained by some radially symmetric function $\Phi_{b,m}^\star\in\mathcal{H}^m_2$.
\end{corollary}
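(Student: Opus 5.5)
The plan is to reduce Corollary \ref{cor m order HUP without tensor} to Theorem \ref{thm m order HUP without tensor} by showing that on radially symmetric functions the radial operators $\mathcal{R}_m$ agree, in absolute value pointwise, with the operators $P_m$. Then both sides of \eqref{equation cor m order HUP without tensor} coincide with the corresponding sides of \eqref{equation m order HUP without tensor}.

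\textbf{Step 1: the pointwise identities.} For $u\in C^\oo_{0,\text{rad}}\left(\RR^N\right)$ write $u(x)=v(r)$. Then $\nabla u=v'(r)\frac{x}{r}$, so $\mathcal{R}_1u=v'(r)$ and $|\nabla u|=|\mathcal{R}_1u|$. Also $\Delta u=v''+\frac{N-1}{r}v'=\mathcal{R}_2u$. Since $\Delta$ maps radial functions to radial functions, induction on $j$ gives $\Delta^ju=\mathcal{R}_2^ju$, and this function is again radial and smooth. Applying the first-order identity to the radial function $\Delta^ju$ yields $|\nabla\Delta^ju|=|\mathcal{R}_1\mathcal{R}_2^ju|$. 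Hence $|P_ku|=|\mathcal{R}_ku|$ pointwise on $\RR^N\setminus\{0\}$ for every $k\ge0$. A single point has measure zero, so it does not affect the integrals.

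\textbf{Step 2: the inequality on the dense class.} By Step 1, $\lVert u\rVert_{\mathcal{H}^m_2}=\lVert u\rVert_{\mathcal{H}^m_1}$ for all $u\in C^\oo_{0,\text{rad}}\left(\RR^N\right)$. Consequently $\mathcal{H}^m_2$ embeds isometrically into $\mathcal{H}^m_1$ as the closure of the radial test functions. For such $u$, inequality \eqref{equation cor m order HUP without tensor} is literally \eqref{equation m order HUP without tensor}, which holds by Theorem \ref{thm m order HUP without tensor}.

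\textbf{Step 3: extension by density.} Both sides of the inequality are continuous with respect to the seminorm. The term $\int|\mathcal{R}_{m-1}u|^2$ needs care, since it is not part of the norm. I would control it by the elementary bound $\int|P_{m-1}u|^2\le\left(\int|P_mu|^2\right)^{1/2}\left(\int|x|^2|P_{m-1}u|^2\right)^{1/2}\cdot C$. This comes from integrating by parts against $x$, exactly as in the first-order Heisenberg proof: for instance $\int|f|^2=-\frac{2}{N}\int f\,x\cdot\nabla f$, applied componentwise. With this bound, $\int|\mathcal{R}_{m-1}u|^2$ is continuous as well. Alternatively, one can simply note that the elements of $\mathcal{H}^m_2$ are interpreted through the same completion used in Theorem \ref{thm m order HUP without tensor}, so the inequality passes to limits there.

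\textbf{Step 4: attainment.} The theorem provides a radially symmetric extremal $\Phi^\star_{b,m}\in\mathcal{H}^m_1$. I would check that it lies in $\mathcal{H}^m_2$, i.e.\ that it is a limit of radial test functions in this norm. Given an approximating sequence $\varphi_n\in C^\oo_0$, I would radialize it by averaging over $SO(N)$. This preserves radial limits and does not increase the seminorm, by convexity and the rotation invariance of $P_m$ and $|x|^2$. Alternatively, one can apply a radial cutoff $\Phi^\star_{b,m}\chi(\cdot/R)$ directly, using the Gaussian-type decay of the extremal. By Step 1, equality for $\Phi^\star_{b,m}$ in \eqref{equation m order HUP without tensor} transfers to \eqref{equation cor m order HUP without tensor}.

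\textbf{Main obstacle.} The only delicate point is this membership and approximation issue; the rest is the identity of Step 1. Optimality also follows, because the extremal attains equality.
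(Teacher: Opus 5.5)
Your proposal is correct and is essentially the paper's argument. The pointwise identity $|\mathcal{R}_k u|=|P_k u|$ for radial $u$ turns \eqref{equation cor m order HUP without tensor} into \eqref{equation m order HUP without tensor}, and the radial extremal $\Phi^\star_{b,m}$ carries over; your $SO(N)$-averaging is valid but unnecessary, since the approximants $\eta_L\Phi^\star_{b,m}$ and $\eta^\star_L\Phi^\star_{b,m}$ built in the proof of Theorem \ref{thm m order HUP without tensor} already lie in $C^\oo_{0,\text{rad}}(\RR^N)$.

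Do drop the alternative based on ``Gaussian-type decay of the extremal''. Only $P_{m-2}\Phi^\star_{b,m}$ ($m$ even), resp.\ $P_{m-1}\Phi^\star_{b,m}$ ($m$ odd), is Gaussian. For $m\geq3$ the function $\Phi^\star_{b,m}$ itself has only power--logarithmic behaviour at infinity and may grow. For $N=2$ and $m$ odd a plain cutoff $\chi(\cdot/R)$ actually fails, which is why the paper needs the logarithmic cutoff $\eta^\star_L$.
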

\begin{remark}
If $u$ is not radially symmetric, then the integral $\int_{\RR^N}\left|\mathcal{R}_mu\right|^2\md x$ may diverge. For example, when $N=m=2$, taking $u(x)=x_1\mathrm{e}^{-|x|^2}$, we have $\int_{\RR^2}\left|\mathcal{R}_2 u\right|^2\md x=+\oo$.
\end{remark}

Finally, we give the higher-order hydrogen uncertainty principles for radially symmetric functions. Let $\mathcal{H}^m_3$ be the closure of $C^\oo_{0,\text{rad}}\left(\RR^N\right)$ with respect to the seminorm
\begin{align}
\label{def of seminorm radial HyUP}
   \lVert u\rVert_{\mathcal{H}^m_{3}}^2= \int_{\RR^N}|\mathcal{R}_m u|^2\md x+\int_{\RR^N}|\mathcal{R}_{m-1}u|^2\md x.
\end{align}
\begin{theorem}
\label{thm m order HyUP}
Let $N,\,m\in\NN_+$ and $N\geq 2$. Then we have
\begin{align}
\label{equation m order HyUP}
    \int_{\RR^N}\left|\mathcal{R}_m u\right|^2\md x\int_{\RR^N}\left|\mathcal{R}_{m-1}u\right|^2\md x\geq\bar{\mu}_m(N)\left(\int_{\RR^N}\frac{\left|\mathcal{R}_{m-1}u\right|^2}{|x|}\md x\right)^2 \qquad\text{for $u\in\mathcal{H}_3^m$},
\end{align}
where $\bar{\mu}_m(N)=\dfrac{(N+1)^2}{4}$ if $m$ is even and $\bar{\mu}_m(N)=\dfrac{(N-1)^2}{4}$ if $m$ is odd. Moreover, the constant $\bar\mu_m(N)$ is attained by some radially symmetric function $\Psi_{b,m}\in\mathcal{H}^m_3$.
\end{theorem}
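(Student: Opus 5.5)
We reduce Theorem \ref{thm m order HyUP} to one-dimensional weighted inequalities for the profile of a radial function, applied to $v=\mathcal{R}_{m-1}u$. For radial $u$, write $u=u(r)$. Then $\mathcal{R}_1u=u'(r)$ and $\mathcal{R}_2u=u''+\frac{N-1}{r}u'=\Delta u$.

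\textbf{Step 1: even $m$.} If $m=2j$, then $\mathcal{R}_mu=\mathcal{R}_2 w$ and $\mathcal{R}_{m-1}u=w'$ with $w=\mathcal{R}_2^{j-1}u$, a smooth radial compactly supported function. Setting $v=w'$, we have $\mathcal{R}_2w=v'+\frac{N-1}{r}v$. The inequality becomes
\[
\int_0^\infty\Big|v'+\tfrac{N-1}{r}v\Big|^2r^{N-1}\md r\int_0^\infty v^2r^{N-1}\md r\geq\tfrac{(N+1)^2}{4}\Big(\int_0^\infty v^2r^{N-2}\md r\Big)^2 .
\]
We apply the expanding-the-squares trick. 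For $b>0$,
\[
0\le\int_0^\infty\Big|v'+\tfrac{N-1}{r}v+bv\Big|^2r^{N-1}\md r .
\]
Expand and integrate by parts, using
\[
\int 2vv'\,r^{N-2}\md r=-(N-2)\int v^2r^{N-3}\md r,\qquad \int 2vv'\,r^{N-1}\md r=-(N-1)\int v^2r^{N-2}\md r .
\]
The $r^{N-3}$ terms combine to $\big[(N-1)^2-(N-1)(N-2)\big]\int v^2r^{N-3}=(N-1)\int v^2r^{N-3}\ge0$. We will instead want an exact square. So we write $v'+\frac{N-1}{r}v=r^{-(N-1)}(r^{N-1}v)'$, which corresponds to a function $V$ with $v=V'$, and note that $|\mathcal{R}_2w|^2$ is what it is. The cleanest route is to expand
\[
\int\Big|v'+\tfrac{N-1}{r}v+bv\Big|^2r^{N-1}\md r\ge0,
\]
which gives
\[
A+b^2B-b(N-1)C+2b(N-1)C\ge0,\quad\text{i.e.}\quad A+b^2B+b(N-1)C\ge 0,
\]
where $A,B,C$ denote the three integrals. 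This inequality is useless, so we choose $-b$ instead: $A+b^2B-b(N+1)C\ge0$. Indeed, $-2b\int vv'r^{N-1}=b(N-1)C$ and $-2b(N-1)\int v^2r^{N-2}=-2b(N-1)C$, so the cross terms total $-b(N-1)C$.

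This falls short of $(N+1)$, so the weight needs rethinking. Since $\mathcal{R}_2w$ pairs naturally with the $N$-dimensional Laplacian, we expand instead $\int|\mathcal{R}_2w+b\,\mathcal{R}_1w|^2$ and also keep $\int|\mathcal{R}_2w|^2\ge$ a Hardy-type remainder. Concretely, we will use the identity
\[
\int|v'+\tfrac{N-1}{r}v|^2r^{N-1}\md r=\int|v'|^2r^{N-1}\md r+(N-1)\int v^2r^{N-3}\md r,
\]
and then apply a one-dimensional inequality in dimension $N+2$ for $\tilde v=v/r$. With $v=r\tilde v$, the right side becomes $\int|(r\tilde v)'|^2r^{N-1}+(N-1)\int\tilde v^2r^{N-1}=\int|\tilde v'|^2r^{N+1}+\big[(N-1)-N+\ldots\big]\int\tilde v^2r^{N-1}$. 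Computing, $\int|(r\tilde v)'|^2r^{N-1}=\int\tilde v'^2r^{N+1}-N\int\tilde v^2r^{N-1}+\int\tilde v^2r^{N-1}$, so the total is exactly $\int\tilde v'^2r^{N+1}\md r$. This is the first-order radial hydrogen principle for $\tilde v$ in dimension $N+2$:
\[
\int\tilde v'^2r^{N+1}\int\tilde v^2r^{N+1}\ge\tfrac{(N+1)^2}{4}\Big(\int\tilde v^2r^{N}\Big)^2 .
\]
That principle follows by expanding $\int(\tilde v'+b\tilde v)^2r^{N+1}\ge0$ and optimizing in $b$. The extremal is $\tilde v=e^{-br}$, i.e.\ $v=re^{-br}$.

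\textbf{Step 2: odd $m$.} If $m=2j+1$, then $\mathcal{R}_mu=v'$ with $v=\mathcal{R}_2^ju=\mathcal{R}_{m-1}u$. The claim is then directly the first-order radial hydrogen principle in dimension $N$, with constant $\frac{(N-1)^2}{4}$. It is proved by expanding $\int(v'+bv)^2r^{N-1}\ge0$, and the extremal is $v=e^{-br}$.

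\textbf{Step 3: extremals and density.} For the constant to be attained we must realize the profiles $v=re^{-br}$ and $v=e^{-br}$ as $\mathcal{R}_{m-1}\Psi_{b,m}$ for some $\Psi_{b,m}\in\mathcal{H}^m_3$. Inverting $\mathcal{R}_1$ means integrating from infinity. Inverting $\mathcal{R}_2$ means solving $(r^{N-1}w')'=r^{N-1}f$ with $w'(r)=r^{1-N}\int_0^r s^{N-1}f\,\md s$. The preimages need not decay; this is harmless, since only $\mathcal{R}_m\Psi$ and $\mathcal{R}_{m-1}\Psi$ enter the seminorm, but one must show that $\Psi$ lies in the completion. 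This is done by cutoff approximation, arguing that $\mathcal{R}_{m-1}(\chi_R\Psi)$ and $\mathcal{R}_m(\chi_R\Psi)$ converge in the weighted $L^2$ norms, and more simply by approximating at the level of $v$.

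\textbf{Main obstacle.} The hard part is this approximation and completion issue. The inequality itself passes to $\mathcal{H}^m_3$ by density, since all three integrals are continuous in the seminorm; the term $\int v^2/r$ is controlled by Hardy's inequality in dimension $N\ge2$ only when combined with the $\mathcal{R}_m$ term. Near $0$, the profile $v=e^{-br}$ does not vanish while $\mathcal{R}_{m-1}u$ for odd $m-1$ must vanish at $0$. So for odd $m$ we must check that a smooth cutoff modification near the origin changes the integrals by $o(1)$, using $N\ge2$.
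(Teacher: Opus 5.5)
Your proof of the inequality is correct and follows the paper. The odd case is identical. For even $m$, the substitution $\tilde v=v/r$ gives $\tilde v'+b\tilde v=r^{-1}\bigl(v'-\frac{v}{r}+bv\bigr)$, so your square is exactly the paper's $\int_{\RR^N}|v'-\frac{v}{r}+bv|^2\md x$, and your identity is \eqref{expand squares HyUP 2}. The dimension-$(N+2)$ reading is a nice way to explain the constant $\frac{(N+1)^2}{4}$; the abandoned attempts in Step 1 should be deleted.

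The gap is the second assertion, that $\bar\mu_m(N)$ is attained in $\mathcal{H}^m_3$. Step 3 only announces a cutoff argument, and both claims it rests on are off.

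Near the origin you have the parity backwards. For odd $m$, $\mathcal{R}_{m-1}u=\mathcal{R}_2^ju$ need not vanish at $0$. The real obstruction is smoothness, and it occurs for both parities. For $u\in C^\infty_{0,\mathrm{rad}}(\RR^N)$, the profile of $\mathcal{R}_2^ju$ is a smooth even function of $r$ and that of $\mathcal{R}_1\mathcal{R}_2^{j-1}u$ is odd. Neither $\mathrm{e}^{-br}=1-br+\cdots$ nor $r\mathrm{e}^{-br}=r-br^2+\cdots$ has this form. Hence $\Psi_{b,m}$ is only $W^{m,\infty}_{\mathrm{loc}}$ at $0$ for every $m$; already $\Psi_{b,2}=(1+b|x|)\mathrm{e}^{-b|x|}$ has a $|x|^3$ term. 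So a regularization is needed in both cases. The paper mollifies $\eta_L\Psi_{b,m}$, using $D^\beta(f_\varepsilon)=(D^\beta f)_\varepsilon$ for the two unweighted integrals and locally uniform convergence for the $|x|^{-1}$-weighted one.

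At infinity, ``the preimages need not decay; this is harmless'' fails for the cutoff step. The seminorm ignores polynomials annihilated by $\mathcal{R}_{m-1}$, but multiplication by $\eta_L$ does not. Take $m=3$ and $N\geq3$. Inverting $\mathcal{R}_2$ from $0$ gives $\mathcal{I}[\mathrm{e}^{-bt}](r)\to C_1=\int_0^{\infty}s^{1-N}\int_0^s t^{N-1}\mathrm{e}^{-bt}\md t\md s>0$. Then $\Delta\bigl(\eta_L\mathcal{I}[\mathrm{e}^{-bt}]\bigr)-\mathrm{e}^{-b|x|}$ contains $C_1\Delta\eta_L$, and
$\int_{\RR^N}|C_1\Delta\eta_L|^2\md x=C_1^2L^{N-4}\int_{\RR^N}|\Delta\eta_1|^2\md x$,
while the other terms have squared $L^2$ norm $O(L^{-N})$. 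This does not tend to $0$ for $N\geq4$. For $m=2j+1$, a polynomial part of degree $2j-2$ produces the same $L^{N-4}$ by scaling.

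So you must strip off these polynomial parts (the paper's $Q_{2j-2}$, $\tilde Q_{2j-4}$) and then prove $D^\beta\Psi_{b,m}=O\bigl(|x|^{m-1-N-|\beta|}\ln^\sigma|x|\bigr)$. That is what the classes $\mathcal{W}(a,\sigma)$ and Lemma \ref{estimate of I} are for. Only then are the commutator terms $D^{\beta_1}\eta_L\,D^{\beta_2}\Psi_{b,m}$ of size $O(L^{-N}\ln^\sigma L)$ on $\{L\le|x|\le2L\}$, hence negligible.

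``Approximating at the level of $v$'' does not bypass this. If $v=\Delta^ju$ with $u$ compactly supported, then $\int_{\RR^N}v\,|x|^{2k}\md x=0$ for $0\le k<j$, and $\mathrm{e}^{-b|x|}$ violates this. The approximants must therefore break these moment conditions at large scales, and controlling that is exactly the analysis above.
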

Our paper is organized as follows:  In Section \ref{sec 2}, we present the properties of the hypergeometric functions needed in the sequel, together with some crucial integral identities that will be used to handle integrals of higher‑order derivatives.
In Section \ref{sec 3}, we carry out a detailed expansion of the $L^2$ norms of $\mathcal{E}_{b,c}(v)$ and $\mathcal{O}_{b,c}(v)$, and give the proof of Theorem \ref{thm m order HUP}. In Section \ref{sec 4}, we study the solutions of the equations $\mathcal{E}_{b,c}(v)=0$ and $\mathcal{O}_{b,c}(v)=0$ for  specific choice of $c$. We provide the explicit expressions of solutions and analyze their asymptotic behaviors. In Section \ref{sec 5}, we prove the existence of extremal functions for Heisenberg uncertainty principle and present the proofs of Theorem \ref{extremal}, Corollary \ref{cor m order HUP} and Corollary \ref{cor Mazya}. 
In Section \ref{sec 6}, we will prove Theorem \ref{thm m order HUP without tensor} and Corollary \ref{cor m order HUP without tensor}.
In Section \ref{sec 7}, we will study the hydrogen uncertainty principle and prove Theorem \ref{thm m order HyUP}.

Throughout this paper, the letter $C$ will denote positive constants that may vary from line to
line.
\vspace{2cm}

\section{Preliminaries}
\label{sec 2}
In this section, we first present the definition and some properties of the hypergeometric function $_1F_1$
, mainly following \cite{MR174795,MR3586206}. Afterwards, we give some integral identities, which will  be crucial in the derivation of the higher-order uncertainty principle.

The hypergeometric function $_1F_1(\alpha,\, \gamma,\,z)$ is defined as 
\begin{align*}
    _{1}F_{1}(\alpha,\,\gamma,\,z)\coloneqq\sum_{n=0}^\oo \frac{(\alpha)_n}{(\gamma)_n}\frac{z^n}{n!}\qquad\left(z\in\CC;\,\gamma\not=0,\,-1,\,-2,\,\dots\right),
\end{align*}
where Pochhammer symbols $(\alpha)_n$ are defined by
\begin{equation*}
    (\alpha)_0=1,\qquad (\alpha)_n=\alpha(\alpha+1)\cdots(\alpha+n-1)\qquad(n=1,\,2,\,3,\,\dots),
\end{equation*}
and similarly to $(\gamma)_n$. The function $F(z)={_{1}F_{1}(\alpha,\,\gamma,\,z)}$ is an entire function of $z$ and satisfies the confluent hypergeometric equation
\begin{equation*}
    zF''(z)+(\gamma-z)F'(z)-\alpha F(z)=0.
\end{equation*}
\vspace{1cm}
\begin{lemma}
\label{property of F}
For $\gamma\not=0,\,-1,\,-2,\,\dots$,  the function $_1F_1(\alpha,\,\gamma,\,z)$ satisfies the following property:
\begin{enumerate}
\item Differentiation formula (see \textup{\cite[Chapter 8]{MR3586206}}): $$\dfrac{\md}{\md z}{_1F_1}(\alpha,\,\gamma,\,z)=\dfrac{\alpha}{\gamma}{_1F_1}(\alpha+1,\,\gamma+1,\,z).$$ 
\item Asymptotic property (see \textup{\cite[Chapter 9]{MR174795}}): If $\alpha$ is either a negative integer or 0, then $_1F_1$ is a polynomial in $z$ of degree not exceeding $|\alpha|$. If $\alpha$ is neither a negative integer nor 0, then $_1F_1$ has the asymptotic representation:
\begin{align*}
_1F_1(\alpha,\,\gamma,\,z)=\dfrac{\Gamma(\gamma)}{\Gamma(\alpha)}\mathrm{e}^{z}z^{\alpha-\gamma}\left[\sum_{k=0}^n\dfrac{(\gamma-\alpha)_k(1-\alpha)_k}{k!}z^{-k}+O\left(|z|^{-n-1}\right)\right],
\end{align*}
for large $|z|$ and $|\mathrm{arg}\,z|\leq \dfrac{\pi}{2}-\delta$, where $n$ is an arbitrary nonnegative integer and $\delta>0$ is arbitrary small.
\item Kummer’s transformation formula  (see \textup{\cite[Chapter 8]{MR3586206}}): \begin{equation*}
_1F_1(\alpha,\,\gamma,\,z)=\mathrm{e}^z {_1F_1}(\gamma-\alpha,\,\gamma,\,-z)
.
\end{equation*}
\end{enumerate}
\end{lemma}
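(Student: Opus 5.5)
The statement to be justified is Lemma~\ref{property of F}, which collects three classical facts about ${_1F_1}$. I would prove parts (1) and (3) directly from the series and the differential equation. For part (2) I would quote the standard asymptotic theory, sketching only how it arises.

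\emph{Part (1).} Differentiate the series term by term; this is legitimate because ${_1F_1}$ is entire, the series having infinite radius of convergence by the ratio test. This gives
\begin{equation*}
\frac{\md}{\md z}{_1F_1}(\alpha,\,\gamma,\,z)=\sum_{n=1}^\oo\frac{(\alpha)_n}{(\gamma)_n}\frac{z^{n-1}}{(n-1)!}=\sum_{n=0}^\oo\frac{(\alpha)_{n+1}}{(\gamma)_{n+1}}\frac{z^{n}}{n!}.
\end{equation*}
The identity $(\alpha)_{n+1}=\alpha(\alpha+1)_n$, and the same for $\gamma$, then turns the right-hand side into $\frac{\alpha}{\gamma}{_1F_1}(\alpha+1,\,\gamma+1,\,z)$.

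\emph{Part (3).} Set $G(z)=\mathrm{e}^z{_1F_1}(\gamma-\alpha,\,\gamma,\,-z)$. Write $H(w)={_1F_1}(\gamma-\alpha,\gamma,w)$, which satisfies $wH''+(\gamma-w)H'-(\gamma-\alpha)H=0$. Substituting $w=-z$, a direct computation shows that $G$ satisfies $zG''+(\gamma-z)G'-\alpha G=0$. Moreover, $G(0)=1$, and $G$ is analytic at $0$.

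When $\gamma$ is not a nonpositive integer, the equation has a regular singular point at $0$ with indicial roots $0$ and $1-\gamma$. Its analytic solution with value $1$ at the origin is unique: the recursion $(n+1)(n+\gamma)a_{n+1}=(n+\alpha)a_n$ determines every coefficient from $a_0$, since $n+\gamma\neq0$. Hence $G={_1F_1}(\alpha,\gamma,\cdot)$.

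As an alternative check, one can compare Taylor coefficients via the Cauchy product. That route reduces to the Chu--Vandermonde identity $\sum_k\binom{n}{k}(\gamma-\alpha)_k(-1)^k/(\gamma)_k=(\alpha)_n/(\gamma)_n$.

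\emph{Part (2).} If $\alpha=-k$ with $k\in\NN$, then $(\alpha)_n=0$ for $n>k$, so the series terminates and ${_1F_1}$ is a polynomial of degree at most $k$.

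For the asymptotic expansion, I would use the Laplace-type integral representation. Assuming first $\mathrm{Re}\,\gamma>\mathrm{Re}\,\alpha>0$,
\begin{equation*}
{_1F_1}(\alpha,\gamma,z)=\frac{\Gamma(\gamma)}{\Gamma(\alpha)\Gamma(\gamma-\alpha)}\int_0^1\mathrm{e}^{zt}t^{\alpha-1}(1-t)^{\gamma-\alpha-1}\md t,
\end{equation*}
which follows by expanding $\mathrm{e}^{zt}$ and integrating Beta functions termwise. Next substitute $t=1-s/z$. The endpoint $t=1$ contributes
\begin{equation*}
\mathrm{e}^zz^{\alpha-\gamma}\int_0^\oo \mathrm{e}^{-s}s^{\gamma-\alpha-1}(1-s/z)^{\alpha-1}\md s.
\end{equation*}
Expanding $(1-s/z)^{\alpha-1}$ by Watson's lemma produces the series $\sum_k(\gamma-\alpha)_k(1-\alpha)_kz^{-k}/k!$, after the Gamma factors are absorbed. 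The endpoint $t=0$ contributes only $O(|z|^{-\alpha})$. Relative to $\mathrm{e}^z z^{\alpha-\gamma}$ this is exponentially small in the sector $|\arg z|\le\pi/2-\delta$.

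The restriction $\mathrm{Re}\,\gamma>\mathrm{Re}\,\alpha>0$ is then removed by analytic continuation in the parameters, for instance using contiguous relations or a Hankel-contour version of the integral.

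The main obstacle is controlling the remainder uniformly in the sector and justifying the continuation to general $\alpha,\gamma$. Since these are textbook results \cite{MR174795,MR3586206}, I would cite them for that step rather than reprove them.
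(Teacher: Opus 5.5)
Your proposal is correct. It differs from the paper mainly in that you actually argue: the paper gives no proof of this lemma and simply refers each item to the standard monographs. Your arguments for (1), for (3), and for the terminating case of (2) are complete and use exactly the hypothesis $\gamma\neq0,-1,-2,\dots$. For (1) you use termwise differentiation plus $(\alpha)_{n+1}=\alpha(\alpha+1)_n$. For (3) you check that $\mathrm{e}^z{_1F_1}(\gamma-\alpha,\gamma,-z)$ solves Kummer's equation, and that the recursion $(n+1)(n+\gamma)a_{n+1}=(n+\alpha)a_n$ determines the entire solution with value $1$ at $0$. This makes the algebraic identities self-contained, while the paper's citation gives full generality (complex parameters, uniformity in the sector) at no cost. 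For the asymptotic half of (2), however, your Euler-integral sketch only reaches $\mathrm{Re}\,\gamma>\mathrm{Re}\,\alpha>0$. The paper applies the expansion with $\alpha<0$: for example $\alpha=(N-c_e)/2$ for even $m\geq4$ in Lemma~\ref{asymptotic of v}, and $\alpha=(2-c_o)/2$, which is negative for larger odd $m$, in Proposition~\ref{expression of v}. So at the point that matters you lean on the literature exactly as the paper does. If you want to avoid that for the cases actually used, note that they all have real parameters with $\gamma>\alpha$, and read (1) backwards as ${_1F_1}(\alpha,\gamma,z)=1+\frac{\alpha}{\gamma}\int_0^z{_1F_1}(\alpha+1,\gamma+1,w)\md w$. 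Integrate the expansion for $(\alpha+1,\gamma+1)$ by parts along the ray through $z$. There $\mathrm{Re}\,w$ grows linearly, so the remainder integrates to $O(|\mathrm{e}^z||z|^{\alpha-\gamma-n-1})$ and the $O(1)$ terms are absorbed. This yields the stated expansion for $(\alpha,\gamma)$, with leading constant $\frac{\alpha}{\gamma}\frac{\Gamma(\gamma+1)}{\Gamma(\alpha+1)}=\frac{\Gamma(\gamma)}{\Gamma(\alpha)}$, and the correct coefficients follow from $\sum_{k=0}^{n}\frac{(-\alpha)_k}{k!}=\frac{(1-\alpha)_n}{n!}$. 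Descending from $(\alpha+p,\gamma+p)$ with $\alpha+p>0$ then covers every such pair. One technical correction to your Watson-lemma step: $(1-s/z)^{\alpha-1}$ is singular at $s=z$, so for real $z$ the integral over $[0,\infty)$ you wrote is not well defined. Instead, split the $t$-integral at $t=1/2$ and expand the binomial only on $|s|\leq|z|/2$. The piece over $t\in[0,1/2]$ is then $O(\mathrm{e}^{\mathrm{Re}\,z/2})$. That suffices, because it is exponentially small relative to $|\mathrm{e}^zz^{\alpha-\gamma}|$ when $|\arg z|\leq\pi/2-\delta$.
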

Next, we present some integral identities involving higher-order derivatives.
\begin{lemma}
\label{integral identity 1}
For $N,\,m\in\NN_+$ and $u\in C^\oo_0\left(\RR^N\right)$, we have
\begin{align*}
    \int_{\RR^N}\left|\nabla^m u\right|^2\md x=\int_{\RR^N}\left|P_m u\right|^2\md x.
\end{align*}
\end{lemma}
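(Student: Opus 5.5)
The identity reduces, by induction on $m$, to the classical integration-by-parts fact $\int_{\RR^N}|\nabla^2 w|^2\md x=\int_{\RR^N}|\Delta w|^2\md x$ for $w\in C^\oo_0\left(\RR^N\right)$. Indeed,
\begin{align*}
\int_{\RR^N}|\nabla^2 w|^2\md x=\sum_{i,j=1}^N\int_{\RR^N}\pt_i\pt_j w\,\pt_i\pt_j w\md x=\sum_{i,j=1}^N\int_{\RR^N}\pt_i\pt_i w\,\pt_j\pt_j w\md x=\int_{\RR^N}|\Delta w|^2\md x.
\end{align*}
This follows by moving $\pt_j$ from the first factor to the second and then $\pt_i$ from the second factor back to the first. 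No boundary terms appear because $w$ has compact support.

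For the induction, the cases $m=1$ ($P_1u=\nabla u=\nabla^1u$) and $m=2$ (the identity above with $w=u$) are immediate. Assume the claim holds for all orders less than $m$, with $m\geq3$. Since the Frobenius norm of $\nabla^m u$ is the sum over all multi-indices, I regroup it as
\begin{align*}
|\nabla^m u|^2=\sum_{i_1,\dots,i_{m-2}}\left|\nabla^2\left(\pt_{i_1}\cdots\pt_{i_{m-2}}u\right)\right|^2.
\end{align*}
Applying the $m=2$ identity to each $w=\pt_{i_1}\cdots\pt_{i_{m-2}}u\in C^\oo_0$ and using that $\Delta$ commutes with partial derivatives gives
\begin{align*}
\int_{\RR^N}|\nabla^m u|^2\md x=\sum_{i_1,\dots,i_{m-2}}\int_{\RR^N}\left|\pt_{i_1}\cdots\pt_{i_{m-2}}\Delta u\right|^2\md x=\int_{\RR^N}|\nabla^{m-2}(\Delta u)|^2\md x.
\end{align*}
The induction hypothesis at order $m-2$, applied to $\Delta u\in C^\oo_0$, then yields $\int|P_{m-2}(\Delta u)|^2$. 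It remains to check from \eqref{def of Pm} that $P_{m-2}\Delta u=P_m u$. For $m-2=2j$ we have $\Delta^j\Delta u=\Delta^{j+1}u$. For $m-2=2j+1$ we have $\nabla\Delta^j\Delta u=\nabla\Delta^{j+1}u$.

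There is no real obstacle here. The only point needing care is the index bookkeeping in the regrouping of the Frobenius norm, together with the commutation of $\Delta$ with derivatives, both of which are routine for compactly supported smooth functions.
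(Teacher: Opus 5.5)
Your proof is correct and is essentially the paper's argument. Both regroup $|\nabla^m u|^2$ as a sum of $|\nabla^2(\pt_{i_1}\cdots\pt_{i_{m-2}}u)|^2$, use $\int|\nabla^2 w|^2\md x=\int|\Delta w|^2\md x$ together with commutation of $\Delta$ with derivatives to reach $\int|\nabla^{m-2}(\Delta u)|^2\md x$, and descend in steps of two to $\Delta^j u$ or $\nabla\Delta^j u$; the differences are cosmetic, since you phrase the descent as a formal induction and write out the integration by parts behind the $m=2$ identity, which the paper uses without comment.
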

\begin{proof}
The case $m=1$ is trivial. Hence, we only consider $m\geq2$ in what follows. In fact, when $m\geq 2$, there holds:
\begin{align*}
    \int_{\RR^N}\left|\nabla^m u\right|^2\md x&=\int_{\RR^N}\sum_{i_1,\,i_2=1}^N\left(\sum_{i_3,\,\dots,\,i_m=1}^N\left|\frac{\partial }{\partial x_{i_1}}\frac{\partial }{\partial x_{i_2}}\frac{\partial }{\partial x_{i_3}}\cdots\frac{\partial }{\partial x_{i_m}}u\right|^2\right)\md x\\
    &=\int_{\RR^N}\left(\sum_{i_3,\,\dots,\,i_m=1}^N\left|\Delta\left(\frac{\partial }{\partial x_{i_3}}\cdots\frac{\partial }{\partial x_{i_m}}u\right)\right|^2\right)\md x\\
    &=\int_{\RR^N}\left(\sum_{i_3,\,\dots,\,i_m=1}^N\left|\frac{\partial }{\partial x_{i_3}}\cdots\frac{\partial }{\partial x_{i_m}}(\Delta u)\right|^2\right)\md x\\
    &=\int_{\RR^N}\left|\nabla^{m-2}(\Delta u)\right|^2\md x.
\end{align*}
Therefore, if $m$ is even and $m=2j$,  then $P_m u=\Delta^j u$ and
\begin{align*}
  \int_{\RR^N}\left|\nabla^m u\right|^2\md x=\int_{\RR^N}\left|\nabla^{2j-2}\left(\Delta u\right)\right|^2\md x=\cdots=\int_{\RR^N}\left|\nabla^2\left(\Delta^{j-1}u\right)\right|^2\md x= \int_{\RR^N}\left|\Delta^j u\right|^2\md x.
\end{align*}
If $m$ is odd and $m=2j+1$, then $P_m u=\nabla \Delta^j u$ and
\begin{equation*}
    \int_{\RR^N}\left|\nabla^{m} u\right|^2\md x=\int_{\RR^N}\left|\nabla^{2j-1}\left(\Delta u\right)\right|^2\md x=\cdots=\int_{\RR^N}\left|\nabla^{3}\left(\Delta^{j-1}u\right)\right|^2\md x=\int_{\RR^N}\left|\nabla\left(\Delta^{j}u\right)\right|^2\md x.
\end{equation*}
\end{proof}
Lemma \ref{integral identity 1} describes the relationship between $\lVert\nabla^m u\rVert_2$ and $\lVert P_mu\rVert_2$. To present the relationship between $\lVert|x|\left|\nabla^m u\right|\rVert_2$ and $\lVert|x|\left|P_m u\right|\rVert_2$, we first need the following lemma.
\begin{lemma}
For $N,\,m\in\NN_+$, $m\geq2$ and $u\in C^\oo_0\left(\RR^N\right)$, we have
\begin{align*}
    \int_{\RR^N}|x|^2\left|\nabla^m u\right|^2\md x=\int_{\RR^N}|x|^2\left|\nabla^{m-2}\left(\Delta u\right)\right|^2\md x+2(N-1)\int_{\RR^N}\left|P_{m-1}u\right|^2\md x.
\end{align*}
\label{integral identity 2}
\end{lemma}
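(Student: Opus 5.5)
The plan is to reduce the statement to a weighted second-order identity for a single scalar function, and then sum over the remaining indices. For a multi-index $(i_3,\dots,i_m)\in\{1,\dots,N\}^{m-2}$ we put $v=v_{i_3\cdots i_m}=\frac{\partial}{\partial x_{i_3}}\cdots\frac{\partial}{\partial x_{i_m}}u\in C^\oo_0\left(\RR^N\right)$, with $v=u$ when $m=2$. Then
\begin{align*}
|x|^2\left|\nabla^m u\right|^2=\sum_{i_3,\dots,i_m}|x|^2\left|\nabla^2 v\right|^2 .
\end{align*}
Since $\Delta$ commutes with partial derivatives, $\sum_{i_3,\dots,i_m}|\Delta v|^2=\left|\nabla^{m-2}(\Delta u)\right|^2$ and $\sum_{i_3,\dots,i_m}|\nabla v|^2=\left|\nabla^{m-1}u\right|^2$. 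It therefore suffices to prove, for every $v\in C^\oo_0\left(\RR^N\right)$,
\begin{align*}
\int_{\RR^N}|x|^2\left|\nabla^2 v\right|^2\md x=\int_{\RR^N}|x|^2|\Delta v|^2\md x+2(N-1)\int_{\RR^N}|\nabla v|^2\md x .
\end{align*}
After summing, we then invoke Lemma \ref{integral identity 1} with order $m-1\geq1$ to replace $\int_{\RR^N}|\nabla^{m-1}u|^2\md x$ by $\int_{\RR^N}|P_{m-1}u|^2\md x$.

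The scalar identity follows from repeated integration by parts, where all boundary terms vanish by compact support. We use summation over repeated indices $i,j$.

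First, integrating by parts in $x_j$,
\begin{align*}
\int|x|^2\partial_{ij}v\,\partial_{ij}v=-2\int x_j\,\partial_{ij}v\,\partial_iv-\int|x|^2\,\partial_i\Delta v\,\partial_iv .
\end{align*}

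Second, the first term on the right equals $-\int x_j\partial_j\left(|\nabla v|^2\right)=N\int|\nabla v|^2$.

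Third, integrating the second term by parts in $x_i$ gives
\begin{align*}
\int\partial_i\left(|x|^2\partial_iv\right)\Delta v=\int|x|^2|\Delta v|^2+2\int(x\cdot\nabla v)\Delta v .
\end{align*}

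Fourth, we handle the cross term:
\begin{align*}
2\int(x\cdot\nabla v)\Delta v=-2\int\nabla(x\cdot\nabla v)\cdot\nabla v=-2\int|\nabla v|^2-2\int x_j\,\partial_{ij}v\,\partial_iv=(N-2)\int|\nabla v|^2 ,
\end{align*}
where the last step reuses the computation from the second step.

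Adding the contributions gives the coefficient $N+(N-2)=2(N-1)$ in front of $\int|\nabla v|^2$, which is the claimed identity.

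The main point requiring care is the cross term $\int(x\cdot\nabla v)\Delta v$, which is a Pohozaev/Rellich-type computation. Its sign and coefficient $(N-2)$ must be tracked exactly, since it combines with the $N$ from the second step to produce the precise constant $2(N-1)$. The passage from the scalar identity to the tensor statement is purely algebraic, using the commutation of $\Delta$ with derivatives together with Lemma \ref{integral identity 1}.
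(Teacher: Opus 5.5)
Your proposal is correct and follows essentially the same route as the paper. The paper likewise proves the weighted second-order scalar identity by the same integration by parts, with the terms contributing $N$ and $(N-2)$. It then applies that identity to $\partial_{i_3}\cdots\partial_{i_m}u$, sums over the indices, and uses Lemma~\ref{integral identity 1} to replace $\int_{\RR^N}|\nabla^{m-1}u|^2\md x$ by $\int_{\RR^N}|P_{m-1}u|^2\md x$.
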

\begin{proof}
For the case $m=2$, there holds
\begin{align}
\label{Bochner 1}
    \int_{\RR^N}|x|^2\left|\nabla^2 u\right|^2\md x&=\sum_{k,\,l=1}^N\int_{\RR^N}|x|^2\left(\dfrac{\partial^2 u}{\partial x_l\partial x_k}\right)^2\md x\nonumber\\
    &=-\sum_{k,\,l=1}^N\int_{\RR^N}\dfrac{\partial}{\partial x_l}\left(|x|^2\dfrac{\partial^2 u}{\partial x_l\partial x_k}\right)\dfrac{\partial u }{\partial x_k}\md x\nonumber\\
    &=-\sum_{k,\,l=1}^N\int_{\RR^N}\left(2x_l\dfrac{\partial^2 u}{\partial x_l\partial x_k}+|x|^2\dfrac{\partial^3 u}{\partial x_l\partial x_l\partial x_k}\right)\dfrac{\partial u }{\partial x_k}\md x.
\end{align}
Notice that
\begin{align}
    \label{Bochner 2}
    -\sum_{k,\,l=1}^N\int_{\RR^N}2x_l\dfrac{\partial u}{\pt x_k}\dfrac{\pt^2u}{\pt x_l\pt x_k}\md x&=-\sum_{k,\,l=1}^N\int_{\RR^N}x_l\dfrac{\pt}{\pt x_l}\left(\left(\dfrac{\pt u}{\pt x_k}\right)^2\right)\md x\nonumber\\
    &=\sum_{k,\,l=1}^N\int_{\RR^N}\left(\dfrac{\pt u}{\pt x_k}\right)^2\md x\nonumber\\
    &=N\int_{\RR^N}\left|\nabla u\right|^2\md x.\\
    -\sum_{k,\,l=1}^N\int_{\RR^N}|x|^2\dfrac{\partial^3 u}{\partial x_l\partial x_l\partial x_k}\dfrac{\partial u }{\partial x_k}\md x&=-\sum_{k=1}^N\int_{\RR^N}|x|^2\dfrac{\pt(\Delta u)}{\pt x_k}\dfrac{\pt u}{\pt x_k}\md x\nonumber\\
    &=2\int_{\RR^N}(x\cdot\nabla u)\Delta u\md x+\int_{\RR^N}|x|^2|\Delta u|^2\md x.\label{Bochner 3}
\end{align}
\\
Moreover, we have
\begin{align}
    \label{Bochner 4}
    2\int_{\RR^N}(x\cdot\nabla u)\Delta u\md x&=-2\sum_{k,\,l=1}^N\int_{\RR^N}\dfrac{\pt u}{\pt x_l}\dfrac{\pt }{\pt x_l}\left(x_k\dfrac{\pt u}{\pt x_k}\right)\md x\nonumber\\
    &=-2\int_{\RR^N}|\nabla u|^2\md x-\sum_{k,\,l=1}^N\int_{\RR^N}x_k\dfrac{\pt}{\pt x_k}\left(\left(\dfrac{\pt u}{\pt x_l}\right)^2\right)\md x\nonumber\\
    &=(N-2)\int_{\RR^N}|\nabla u|^2\md x.
\end{align}
\\
Substituting \eqref{Bochner 2}, \eqref{Bochner 3}, \eqref{Bochner 4} into \eqref{Bochner 1}, we obtain the following result:
\begin{equation*}
    \int_{\RR^N}\lvert x\rvert^2\lvert \nabla^2 u\rvert^2\md x=\int_{\RR^N}\lvert x\rvert^2\lvert \Delta u\rvert^2\md x+2(N-1)\int_{\RR^N}\lvert \nabla u\rvert^2\md x.
\end{equation*}
\\
Using the result for $m=2$, we readily obtain the result for $m\geq3$:
\begin{align*}
    \int_{\RR^N}|x|^2\left|\nabla^m u\right|^2\md x&=\sum_{i_1,\,\dots,\,i_{m-2}=1}^N\int_{\RR^N}|x|^2\left|\nabla^2\left(\dfrac{\pt^{m-2}u}{\pt x_{i_1}\cdots\pt x_{i_{m-2}}}\right)\right|^2\md x\\
    &=\sum_{i_1,\,\dots,\,i_{m-2}=1}^N\left[\int_{\RR^N}\lvert x\rvert^2\left\lvert \Delta \left(\dfrac{\pt^{m-2}u}{\pt x_{i_1}\cdots\pt x_{i_{m-2}}}\right)\right\rvert^2\md x\right.\\
    &\hspace{4cm} \left.+2(N-1)\int_{\RR^N}\left\lvert \nabla \left(\dfrac{\pt^{m-2}u}{\pt x_{i_1}\cdots\pt x_{i_{m-2}}}\right)\right\rvert^2\md x\right]\\
    &=\int_{\RR^N}|x|^2\left|\nabla^{m-2}(\Delta u)\right|^2\md x+2(N-1)\int_{\RR^N}\left|\nabla^{m-1}u\right|^2\md x\\
    &=\int_{\RR^N}|x|^2\left|\nabla^{m-2}(\Delta u)\right|^2\md x+2(N-1)\int_{\RR^N}\left|P_{m-1}u\right|^2\md x,
\end{align*}
where the last equality above follows from Lemma \ref{integral identity 1}.
\end{proof}
By repeatedly applying Lemma \ref{integral identity 2}, we obtain the following powerful identity, which will be very important in Section \ref{sec 3}.
\begin{lemma}
\label{integral identity 3}
For $N,\,m\in\NN_+$ and $u\in C^\oo_0\left(\RR^N\right)$, we have
 \begin{align*}
     \int_{\RR^N}|x|^2\left|\nabla^{m} u\right|^2\md x=\int_{\RR^N}|x|^2\left|P_{m}u\right|^2\md x+2(N-1)\left\lfloor \frac{m}{2}\right\rfloor\int_{\RR^N}\left|P_{m-1}u\right|^2\md x,
 \end{align*}
 where $\lfloor m/2\rfloor$ denotes the greatest integer less than or equal to $m/2$.
\end{lemma}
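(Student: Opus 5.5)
The plan is to argue by induction on $m$, peeling off two derivatives at a time with Lemma \ref{integral identity 2} and converting unweighted terms with Lemma \ref{integral identity 1}.

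\emph{Base cases.} For $m=1$ we have $\nabla^1 u=\nabla u=P_1u$ and $\lfloor 1/2\rfloor=0$, so the identity is trivial. For $m=2$, Lemma \ref{integral identity 2} gives
\begin{align*}
\int_{\RR^N}|x|^2|\nabla^2 u|^2\md x=\int_{\RR^N}|x|^2|\Delta u|^2\md x+2(N-1)\int_{\RR^N}|\nabla u|^2\md x.
\end{align*}
This is exactly the claim, since $P_2u=\Delta u$, $P_1u=\nabla u$ and $\lfloor 2/2\rfloor=1$.

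\emph{Inductive step.} Assume the identity for $m-2$, where $m\geq 3$; it holds for every function in $C^\oo_0\left(\RR^N\right)$. Lemma \ref{integral identity 2} gives
\begin{align*}
\int_{\RR^N}|x|^2|\nabla^m u|^2\md x=\int_{\RR^N}|x|^2|\nabla^{m-2}(\Delta u)|^2\md x+2(N-1)\int_{\RR^N}|P_{m-1}u|^2\md x.
\end{align*}
We apply the induction hypothesis to $v=\Delta u\in C^\oo_0\left(\RR^N\right)$ at order $m-2$:
\begin{align*}
\int_{\RR^N}|x|^2|\nabla^{m-2}v|^2\md x=\int_{\RR^N}|x|^2|P_{m-2}v|^2\md x+2(N-1)\left\lfloor\frac{m-2}{2}\right\rfloor\int_{\RR^N}|P_{m-3}v|^2\md x.
\end{align*}
From the definition \eqref{def of Pm}, $P_{k}(\Delta u)=P_{k+2}u$ for every $k\geq0$, in both parities. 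Hence $P_{m-2}v=P_mu$ and $P_{m-3}v=P_{m-1}u$; when $m=3$ the latter reads $P_0\Delta u=\Delta u$ versus $P_2u=\Delta u$, which is consistent. Substituting, the coefficient of $\int_{\RR^N}|P_{m-1}u|^2\md x$ becomes
\begin{align*}
2(N-1)\left(1+\left\lfloor\frac{m-2}{2}\right\rfloor\right)=2(N-1)\left\lfloor\frac{m}{2}\right\rfloor,
\end{align*}
which closes the induction.

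There is no real obstacle here. The only points needing care are checking the index shift $P_k\Delta=P_{k+2}$ at the boundary cases $k=0,1$, and making sure the induction hypothesis is applied to the new function $\Delta u$, which is legitimate because $\Delta u$ remains in $C^\oo_0\left(\RR^N\right)$.
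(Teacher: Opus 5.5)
Your proof is correct and is essentially the paper's argument. The paper also iterates Lemma \ref{integral identity 2} on $u,\Delta u,\Delta^2u,\dots$, with each step contributing $2(N-1)\int_{\RR^N}|P_{m-1}u|^2\md x$, though it writes out the even and odd cases of $m$ separately; your induction in steps of two, using the shift $P_k\Delta=P_{k+2}$, is the same computation handled uniformly in both parities.
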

\begin{proof}
The case $m=1$ is trivial. Therefore, we only need to consider $m\geq2$. Let $j=\lfloor m/2\rfloor$. If $m=2j$ for $j\geq 1$, by Lemma \ref{integral identity 2}, there holds
\begin{align*}
    \int_{\RR^N}|x|^2\left|\nabla^m u\right|^2\md x&=\int_{\RR^N}|x|^2\left|\nabla^{2j-2}(\Delta u)\right|^2\md x+2(N-1)\int_{\RR^N}\left|P_{m-1}u\right|^2\md x\\
    &=\int_{\RR^N}|x|^2\left|\nabla^{2j-4}\left(\Delta^2 u\right)\right|^2\md x+4(N-1)\int_{\RR^N}\left|P_{m-1}u\right|^2\md x\\
    &\cdots\\
    &=\int_{\RR^N}|x|^2\left|\Delta^j u\right|^2\md x+2j(N-1)\int_{\RR^N}\left|P_{m-1}u\right|^2\md x\\
    &=\int_{\RR^N}|x|^2\left|P_m u\right|^2\md x+2(N-1)\left\lfloor\dfrac{m}{2}\right\rfloor\int_{\RR^N}\left|P_{m-1}u\right|^2\md x.
\end{align*}
Similarly, if $m=2j+1$ for $j\geq1$, we obtain
\begin{align*}
    \int_{\RR^N}|x|^2\left|\nabla^m u\right|^2\md x&=\int_{\RR^N}|x|^2\left|\nabla^{2j-1}(\Delta u)\right|^2\md x+2(N-1)\int_{\RR^N}\left|P_{m-1}u\right|^2\md x\\
    &=\int_{\RR^N}|x|^2\left|\nabla^{2j-3}\left(\Delta^2 u\right)\right|^2\md x+4(N-1)\int_{\RR^N}\left|P_{m-1}u\right|^2\md x\\
    &\cdots\\
    &=\int_{\RR^N}|x|^2\left|\nabla\Delta^j u\right|^2\md x+2j(N-1)\int_{\RR^N}\left|P_{m-1}u\right|^2\md x\\
    &=\int_{\RR^N}|x|^2\left|P_m u\right|^2\md x+2(N-1)\left\lfloor\dfrac{m}{2}\right\rfloor\int_{\RR^N}\left|P_{m-1}u\right|^2\md x.
\end{align*}
\end{proof}
\section{Proof of Theorem \ref{thm m order HUP}}\label{sec 3}
In this section, we will prove Theorem \ref{thm m order HUP}. The key point is to analyze the following two operators, corresponding to the cases where $m$ is even and where $m$ is odd, respectively:
\begin{align}
    \mathcal{E}_{b,c}(u)&=\Delta u+b x\cdot \nabla u+bc u,\label{def of Ebc}\\
    \mathcal{O}_{b,c}(u)&=\nabla \Delta u+(b\Delta u) x+bc \nabla u,\label{def of Obc}
\end{align}
for $b,\,c\in \RR$ and $b>0$. For any  $u\in C^\oo_0\left(\RR^N\right)$, by integration by parts, we obtain
\begin{align}
\int_{\RR^N}\lvert\mathcal{E}_{b,c}(u)\rvert^2\md x&=\int_{\RR^N}|\Delta u|^2\md x+b^2\int_{\RR^N}(x\cdot \nabla u)^2\md x+b^2c^2\int_{\RR^N}u^2\md x\nonumber\\
&\hspace{1cm}+2b\int_{\RR^N}(x\cdot \nabla u)\Delta u\md x+2bc\int_{\RR^N}u\Delta u\md x+2b^2c\int_{\RR^N}(x\cdot \nabla u)u\md x\nonumber\\
&=\int_{\RR^N}|\Delta u|^2\md x+b^2\int_{\RR^N}(x\cdot \nabla u)^2\md x+b^2c^2\int_{\RR^N}u^2\md x\nonumber\\
&\hspace{1cm}+b(N-2)\int_{\RR^N}|\nabla u|^2\md x-2bc\int_{\RR^N}|\nabla u|^2\md x-b^2cN\int_{\RR^N}u^2\md x\nonumber\\
&=\int_{\RR^N}|\Delta u|^2\md x+b^2\int_{\RR^N}(x\cdot \nabla u)^2\md x+b(N-2-2c)\int_{\RR^N}|\nabla u|^2\md x\nonumber\\
&\hspace{1cm}+b^2\left(c^2-cN\right)\int_{\RR^N}u^2\md x.
    \label{expansion of E}
    \end{align}
    \begin{align}
 \label{expansion of O}
   \int_{\RR^N}\left|\mathcal{O}_{b,c} (u)\right|^2\md x&=\int_{\RR^N}\lvert \nabla \Delta u\rvert^2\md x+b^2\int_{\RR^N}|\Delta u|^2\lvert x\rvert^2\md x+b^2c^2\int_{\RR^N}\lvert \nabla u\rvert^2\md x\nonumber\\
    &\quad +2b\int_{\RR^N}(\nabla \Delta u\cdot x)\Delta u\md x+2bc\int_{\RR^N}\nabla\Delta u\cdot \nabla u\md x+2b^2c\int_{\RR^N}(\nabla u\cdot x)\Delta u\md x\nonumber\\
    &=\int_{\RR^N}\lvert \nabla \Delta u\rvert^2\md x+b^2\int_{\RR^N}|\Delta u|^2\lvert x\rvert^2\md x+b^2c^2\int_{\RR^N}\lvert \nabla u\rvert^2\md x\nonumber\\
    &\quad -b\int_{\RR^N}N |\Delta u|^2\md x-2bc\int_{\RR^N}|\Delta u|^2\md x+b^2c(N-2)\int_{\RR^N}|\nabla u|^2\md x\nonumber\\
    &=\int_{\RR^N}\lvert \nabla\Delta u\rvert^2\md x+b^2\int_{\RR^N}\lvert \Delta u\rvert^2\lvert x\rvert^2\md x+b(-N-2c)\int_{\RR^N}\lvert \Delta u\rvert^2\md x\nonumber\\
    &\quad+b^2\left(c^2+c(N-2)\right)\int_{\RR^N}\lvert \nabla u\rvert^2\md x.
\end{align}
\\
\begin{proof}[Proof of Theroem \ref{thm m order HUP}]
Denote $j=\lfloor m/2\rfloor$.
\vspace{0.5cm}
\\
\textbf{Case 1: }$m=2j$. Let $u\in C^\oo_0\left(\RR^N\right)\backslash\{0\}$ and $v=P_{m-2}u=\Delta^{j-1}u$. Using \eqref{expansion of E} and Lemma \ref{integral identity 3}, we have
\begin{align}
    0&\leq \int_{\RR^N}\lvert\mathcal{E}_{b,c}(v)\rvert^2\md x\nonumber\\
    &\leq\int_{\RR^N}\lvert\Delta v\rvert^2\md x+b^2\int_{\RR^N}\lvert x\rvert^2\lvert\nabla v\rvert^2\md x+b(N-2c-2)\int_{\RR^N}\lvert \nabla v\rvert^2\md x+b^2\left(c^2-cN\right)\int_{\RR^N}v^2\md x\nonumber\\
    &=\int_{\RR^N}|P_m u|^2\md x+b^2\int_{\RR^N}|x|^2\left|P_{m-1}u\right|^2\md x+b(N-2c-2)\int_{\RR^N}\left|P_{m-1}u\right|^2\md x\nonumber\\
    &\hspace{2cm}+b^2\left(c^2-cN\right)\int_{\RR^N}\left|P_{m-2} u\right|^2\md x\nonumber\\
    &=\int_{\RR^N}|P_m u|^2\md x+b^2\int_{\RR^N}|x|^2\left|\nabla^{m-1}u\right|^2\md x+b(N-2c-2)\int_{\RR^N}\left|P_{m-1}u\right|^2\md x\nonumber\\
    &\hspace{2cm}+b^2\left(c^2-cN-(m-2)(N-1)\right)\int_{\RR^N}\left|P_{m-2} u\right|^2\md x.
    \label{linearized even}
\end{align}
Now we take $c_e$ to be the positive root of the equation $c^2-cN-(m-2)(N-1)=0$, namely:
\begin{equation}
\label{def of ce}
    c_e=\frac{1}{2}\left(N+\sqrt{N^2+4(m-2)(N-1)}\right).
\end{equation}
It follows that
\begin{align*}
    b(2c_e-N+2)\int_{\RR^N}\left|P_{m-1}\right|^2\md x\leq \int_{\RR^N}\left|P_m u\right|^2\md x+b^2\int_{\RR^N}|x|^2\left|\nabla^{m-1}u\right|^2\md x,
\end{align*}
for any $b>0$. Since $u\not\equiv0$, we can choose
\begin{equation*}
    b=\left(\int_{\RR^N}\left|P_m u\right|^2\md x\right)^{\frac{1}{2}}\left(\int_{\RR^N}|x|^2\left|\nabla^{m-1}u\right|^2\md x\right)^{-\frac{1}{2}}.
\end{equation*}
Thus, we have
\begin{align*}
    \left(\frac{2c_e-N+2}{2}\right)^2\left(\int_{\RR^N}\left|P_{m-1}\right|^2\md x\right)^2\leq\int_{\RR^N}\left|P_mu\right|^2\md x\int_{\RR^N}|x|^2\left|\nabla^{m-1}u\right|^2\md x.
\end{align*}
\\
\textbf{Case 2: }$m=2j+1$. Let $u\in C^\oo_0\left(\RR^N\right)\backslash\{0\}$ and $v=\Delta^{j-1}u$ such that $\nabla v=P_{m-2}u$. By \eqref{expansion of O} and \eqref{integral identity 3}, we have
\begin{align}
    \int_{\RR^N}\left|\mathcal{O}_{b,c} (v)\right|^2\md x
    &=\int_{\RR^N}\lvert \nabla\Delta v\rvert^2\md x+b^2\int_{\RR^N}|\Delta v|^2\lvert x\rvert^2\md x-b(N+2c)\int_{\RR^N}\lvert \Delta v\rvert^2\md x\nonumber\\
    &\qquad+b^2\left(c^2+c(N-2)\right)\int_{\RR^N}\lvert \nabla v\rvert^2\md x\nonumber\\
    &=\int_{\RR^N}\left\lvert P_m u\right\rvert^2\md x+b^2\int_{\RR^N}\left\lvert \nabla^{m-1}u\right\rvert^2\lvert x\rvert^2\md x-b(N+2c)\int_{\RR^N}\left\lvert P_{m-1}u\right\rvert^2\md x\nonumber\\
    &\qquad+b^2\left(c^2+c(N-2)-(m-1)(N-1)\right)\int_{\RR^N}\left\lvert P_{m-2}u\right\rvert^2\md x.
    \label{linearized odd}
\end{align}
Similarly, we take $c_o$ to be the positive root of the equation $c^2+c(N-2)-(m-1)(N-1)=0$, that is,
\begin{equation}
\label{def of co}
    c_o=\frac{1}{2}\left(2-N+\sqrt{(N-2)^2+4(m-1)(N-1)}\right).
\end{equation}
It follows that
\begin{align*}
    b\left(N+2c_o\right)\int_{\RR^N}\left|P_{m-1}\right|^2\md x\leq \int_{\RR^N}\left|P_m u\right|^2\md x+b^2\int_{\RR^N}|x|^2\left|\nabla^{m-1}u\right|^2\md x,
\end{align*}
for any $b>0$. Since $u\not\equiv0$, we can choose
\begin{equation*}
    b=\left(\int_{\RR^N}\left|P_m u\right|^2\md x\right)^{\frac{1}{2}}\left(\int_{\RR^N}|x|^2\left|\nabla^{m-1}u\right|^2\md x\right)^{-\frac{1}{2}}.
\end{equation*}
Thus, we have
\begin{align*}
    \left(\frac{N+2c_o}{2}\right)^2\left(\int_{\RR^N}\left|P_{m-1}u\right|^2\md x\right)^2\leq\int_{\RR^N}\left|P_mu\right|^2\md x\int_{\RR^N}|x|^2\left|\nabla^{m-1}u\right|^2\md x.
\end{align*}
\\
\indent Finally, in both \textbf{Case\,1} and \textbf{Case\,2}, we have the following relation:
\begin{equation*}
    \mu_m(N)\coloneqq\left(\frac{2+\sqrt{N^2+4(m-2)(N-1)}}{2}\right)^2=\left(\frac{N+2c_o}{2}\right)^2=\left(\frac{2c_e-N+2}{2}\right)^2.
\end{equation*}
Consequently, by density argument, \eqref{eq m order HUP} holds for any $m\geq2$ and $u\in \mathcal{H}^m$.
\end{proof}

Now we have established the $m$-th order Heisenberg uncertainty principle. Next, we will focus on showing that the constant $\mu_m(N)$ is optimal and on proving the existence of the extremal functions. Motivated by \eqref{linearized even} and \eqref{linearized odd}, we will first seek solutions to $\mathcal{E}_{b,c_e}(v)=0$ and $\mathcal{O}_{b,c_o}(v)=0$ in the following section, where $c_e$ and $c_o$  are defined by \eqref{def of ce} and \eqref{def of co}, respectively. For notational simplicity, from now on, we denote $\mathcal{E}_b=\mathcal{E}_{b,c_e}$ and  $\mathcal{O}_b=\mathcal{O}_{b,c_o}$.
\vspace{1cm}
\section{Solutions of  \texorpdfstring{$\mathcal{E}_{b}(v)=0$}{Eb(v)=0} and \texorpdfstring{$\mathcal{O}_{b}(v)=0$}{Ob(v)=0} and Their Asymptotic Properties}\label{sec 4}
In this section, we prove that the equations $\mathcal{E}_{b}(v)=0$ and $\mathcal{O}_{b}(v)=0$ admit radially symmetric solutions for $N\geq2$. And we analyze in detail the corresponding asymptotic behavior of these solutions. For two functions $f$ and $g$, we say that $f(t)$ is of order $g(t)$ as $t\to+\infty$ and write $f(t)=O(g(t))$, if there exists some $t_0>0$ and $C>0$ such that $|f(t)|\leq C|g(t)|$ for all $t\geq t_0$.
\begin{proposition}
\label{expression of v}
For $N\geq 2$, $m\geq2$ and $b>0$, we denote $r=|x|$ and define
\begin{align*}
    v_e(x,\,b)=v_e(r,\,b)=&\mathrm{e}^{-\frac{br^2}{2}}{_1F_1}\left(\dfrac{N-c_e}{2},\,\dfrac{N}{2},\,\dfrac{br^2}{2}\right),\\
    v_o(x,\,b)=v_o(r,\,b)=&-\int_{r}^{+\oo}s\mathrm{e}^{-\frac{bs^2}{2}}{_1F_1}\left(\dfrac{2-c_o}{2},\,\dfrac{N+2}{2},\,\dfrac{bs^2}{2}\right)\md s,
\end{align*}
where $c_e$ is defined by \eqref{def of ce}  and $c_o$ is defined by \eqref{def of co}.  Then $v_e$ and $v_o$ are smooth with respect to $x$ such that $\mathcal{E}_{b}\left(v_e\right)=0$ and $\mathcal{O}_b\left(v_o\right)=0$ hold in $\RR^N$.
\end{proposition}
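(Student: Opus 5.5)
The plan is to reduce both equations to radial ODEs and then to Kummer's equation via the substitution $z=\frac{br^2}{2}$. Smoothness in $x$ is handled separately at the end.

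\textbf{Even case.} For radial $v=v(r)$ we have
\begin{equation*}
\Delta v=v''+\frac{N-1}{r}v',\qquad x\cdot\nabla v=rv'.
\end{equation*}
Hence $\mathcal{E}_b(v)=0$ reads
\begin{equation*}
v''+\Big(\frac{N-1}{r}+br\Big)v'+bc_e v=0.
\end{equation*}
Write $v=\mathrm{e}^{-\frac{br^2}{2}}w(r)$. The expected result is
\begin{equation*}
w''+\Big(\frac{N-1}{r}-br\Big)w'+b(c_e-N)w=0.
\end{equation*}
This follows from $v'=\mathrm{e}^{-br^2/2}(w'-brw)$ and $v''=\mathrm{e}^{-br^2/2}(w''-2brw'-bw+b^2r^2w)$. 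The $b^2r^2$ terms cancel, and the zeroth-order coefficient becomes $-b-(N-1)b+bc_e=b(c_e-N)$. Next set $w(r)=F(z)$ with $z=br^2/2$. Then $w'=brF'$ and $w''=bF'+b^2r^2F''$. Dividing by $2b$ gives
\begin{equation*}
zF''+\Big(\frac N2-z\Big)F'-\frac{N-c_e}{2}F=0,
\end{equation*}
which is the confluent hypergeometric equation with $\alpha=\frac{N-c_e}{2}$ and $\gamma=\frac N2$. So $F={_1F_1}(\alpha,\gamma,z)$, which is admissible since $\gamma=N/2>0$.

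\textbf{Odd case.} For radial $v$,
\begin{equation*}
\mathcal{O}_b(v)=\frac{x}{r}\Big[(\Delta v)'+br\,\Delta v+bc_o v'\Big].
\end{equation*}
It therefore suffices to find radial $v$ with $(\Delta v)'+br\Delta v+bc_ov'=0$. Put $g=v'/r$, so that $v'=rg$ and
\begin{equation*}
\Delta v=rg'+Ng,\qquad (\Delta v)'=rg''+(N+1)g'.
\end{equation*}
Dividing by $r$, the equation becomes
\begin{equation*}
g''+\Big(\frac{N+1}{r}+br\Big)g'+b(N+c_o)g=0.
\end{equation*}
This is exactly the even-type radial equation with $N$ replaced by $N+2$ and $c$ replaced by $N+c_o$. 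By the computation above, $g=\mathrm{e}^{-br^2/2}{_1F_1}\big(\frac{(N+2)-(N+c_o)}{2},\frac{N+2}{2},\frac{br^2}{2}\big)$. Since $\frac{(N+2)-(N+c_o)}{2}=\frac{2-c_o}{2}$, this is precisely the integrand of $v_o$ divided by $s$. The choice $v_o(r)=-\int_r^\infty s\,g(s)\,\mathrm{d}s$ gives $v_o'=rg$, as required.

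\textbf{Convergence and smoothness.} I need $\int_r^\infty$ to converge. By Lemma \ref{property of F}(2), $\mathrm{e}^{-z}{_1F_1}(\alpha,\gamma,z)=O(z^{\alpha-\gamma})$, or it is $\mathrm{e}^{-z}$ times a polynomial when $\alpha\in-\NN$. So $sg(s)=O(s^{1+2(\alpha-\gamma)})$ with $\alpha-\gamma=\frac{-c_o-N}{2}$. This gives decay like $s^{-1-2c_o-N+2}=s^{1-N-2c_o}$, which is integrable because $N\ge2$ and $c_o>0$.

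For smoothness, $v_e$ is an entire function of $r^2=|x|^2$ composed with $\mathrm{e}^{-br^2/2}$, hence smooth on $\RR^N$. For $v_o$, the integrand is $sG(s^2)$ with $G$ entire. Hence $v_o(r)=-\tfrac12\int_{r^2}^\infty G(t)\,\mathrm{d}t=H(r^2)$, where $H'=\tfrac12 G$ is smooth on $\RR$. Therefore $v_o(x)=H(|x|^2)$ is smooth.

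The radial reduction only holds away from $0$. However, both sides are smooth on $\RR^N$, so the identities extend to $x=0$ by continuity.

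I expect the main obstacle to be bookkeeping in the odd case: correctly identifying the shifted parameters $N\to N+2$, $c\to N+c_o$, and verifying the decay estimate that makes $v_o$ well defined.
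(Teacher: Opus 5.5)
Your proposal is correct and follows the paper's route. You reduce to the radial ODE, substitute $z=br^2/2$ to reach Kummer's equation, and get smoothness by writing $v_e,v_o$ as smooth functions of $|x|^2$; your reduction of the odd case to the even one via $g=v'/r$ with $N\to N+2$, $c\to N+c_o$ is just a tidier packaging of the paper's direct computation of $v_o',v_o'',v_o'''$. One small arithmetic slip: $1+2(\alpha-\gamma)=1-N-c_o$, not $1-N-2c_o$, but since $N+c_o>2$ the integrand of $v_o$ is still integrable at infinity, so the conclusion is unaffected.
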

\begin{proof}
We fix $b>0$. By abuse of notation, we write $v_e(x)=v_e(x,\,b)$ and $v_o(x)=v_o(x,\,b)$ whenever no confusion arises. From Lemma \ref{property of F}, we have
\begin{align*}
    s\mathrm{e}^{-\frac{bs^2}{2}}{_1F_1}\left(\frac{2-c_o}{2},\,\frac{N+2}{2},\,\frac{bs^2}{2}\right)=O\left(s^{1-N-c_o}\right)\text{ as }s\to+\oo,
\end{align*}
where $1-N-c_o<-1$. Thus, $v_o(x)$ is well-defined. Next, we rigorously verify that both $v_e$ and $v_o$ are smooth functions on the whole space $\RR^N$, particularly at the origin $x=0$. Note that a radial function belongs to $C^\oo\left(\RR^N\right)$ if it can be written as a smooth function of $|x|^2$. For $v_e$, since the map $z\mapsto \mathrm{e}^{-z} {_1F_1}\left(\frac{N-c_e}{2},\,\frac{N}{2},\,z\right)$ is an entire function on $\CC$, and $z(x)=\frac{b|x|^2}{2}$ is smooth in $x$, their composition is smooth. Thus $v_e\in C^\oo\left(\RR^N\right)$. For $v_o$, we use the change of variables $\tau=s^2/2$ and the integral can be rewritten as:
\begin{equation*}
    v_o(x)=-\int_{\frac{|x|^2}{2}}^{+\oo}\mathrm{e}^{-{b\tau}}{_1F_1}\left(\dfrac{2-c_o}{2},\,\dfrac{N+2}{2},\,{b\tau}\right)\md \tau.
\end{equation*}
The fundamental theorem of calculus ensures that the map $$t\mapsto -\int_{t}^{+\oo}\mathrm{e}^{-{b\tau}}{_1F_1}\left(\dfrac{2-c_o}{2},\,\dfrac{N+2}{2},\,{b\tau}\right)\md \tau$$ belongs to $C^\oo\left([0,\,+\oo)\right)$. Since $t(x)=\frac{|x|^2}{2}$ is smooth in $x$, it follows that $v_o(x)\in C^\oo\left(\RR^N\right)$. 

Next, we show that $\mathcal{E}_{b}(v_e)=0$. Let $z=\dfrac{br^2}{2}$ and $M_{e}(z)={_1F_1}\left(\dfrac{N-c_e}{2},\,\dfrac{N}{2},\,z\right)$. Then $v_e(r)=\mathrm{e}^{-z}M_e(z)$ and
\begin{align*}
    v_e'(r)&=\frac{\md v_e}{\md z}\frac{\md z}{\md r}=br\mathrm{e}^{-z}\left(M_e'(z)-M_e(z)\right),\\
    v_e''(r)&=b\mathrm{e}^{-z}\left(2zM_e''(z)+(1-4z)M_e'(z)+(2z-1)M_e(z)\right).
\end{align*}
It follows that
\begin{align*}
    \mathcal{E}_b(v_e)&=v_e''(r)+\left(\frac{N-1}{r}+br\right)v_e'(r)+bc_ev_e(r)\\
    &=b\mathrm{e}^{-z}\left[2zM_e''(z)+(N-2z)M_e'(z)+(c_e-N)M_e(z)\right].
\end{align*}
Notice that $M_e$ satisfies the confluent hypergeometric equation
\begin{equation*}
    zM_e''(z)+\left(\frac{N}{2}-z\right)M_e'(z)-\frac{N-c_e}{2} M_e(z)=0.
\end{equation*}
Hence, we have $\mathcal{E}_b\left(v_e\right)=0$. By a similar argument, one can prove that $\mathcal{O}_b\left(v_o\right)=0$. Denote $z=\dfrac{br^2}{2}$ and $M_o(z)={_1F_1}\left(\dfrac{2-c_o}{2},\,\dfrac{N+2}{2},\,z\right)$, then we have
\begin{align*}
    v_o'(r)&=r\mathrm{e}^{-z}M_o(z),\\
    v_o''(r)&=\mathrm{e}^{-z}\left((1-2z)M_o(z)+2zM_o'(z)\right),\\
    v_o''(r)&=br\mathrm{e}^{-z}\left(2zM_o''(z)+(3-4z)M_o'(z)+(2z-3)M_o(z)\right).
\end{align*}
By a direct computation, we have
\begin{align*}
    \mathcal{O}_b\left(v_o\right)&=\frac{x}{r}\left(v_o'''(r)+\dfrac{N-1}{r}v_o''(r)-\dfrac{N-1}{r^2}v_o'(r)+b r\left(v_o''(r)+\dfrac{N-1}{r}v_o'(r)\right)+bc_o v_o'(r)\right)\\
    &=2br\mathrm{e}^{-z}\left[zM_o''(z)+\left(\dfrac{N+2}{2}-z\right)M_o'(z)-\frac{2-c_o}{2}M_o(z)\right].
\end{align*}
As $M_o$ satisfies the confluent hypergeometric equation
\begin{equation*}
    zM_o''(z)+\left(\frac{N+2}{2}-z\right)M_o'(z)-\frac{2-c_o}{2} M_o(z)=0,
\end{equation*}
we have $\mathcal{O}_b\left(v_o\right)=0$.
\end{proof}

In the remainder of this section, we will  divide the analysis into the two cases  $N\geq2$ and $N=1$. Firstly, in the case $N=2$, we seek for some suitable function $\Phi_{b,m}\in C^{\oo}\left(\RR^N\right)$ such that $\Delta^{j-1}\Phi_{b,m}(x)=v_e(x,\,b)$ for $m=2j$ and $\Delta^{j-1}\Phi_{b,m}(x)=v_o(x,\,b)$ for $m=2j+1$. In particular, for $m=2$ and 
$m=3$, it suffices to take $\Phi_{b,2}(x)=v_e(|x|,\,b)$ and $\Phi_{b,3}(x)=v_o(|x|,\,b)$, respectively. For higher-order cases, we introduce the following operator:
\begin{align}
\label{def of I}
    \mathcal{I}[f](r)=\int_{0}^rs^{1-N}\int_0^st^{N-1}f(t)\md t\md s.
\end{align}
For any $f\in C^\oo\left([0,\,+\oo)\right)$, a direct computation shows that $\Delta \left(\mathcal{I}[f](|x|)\right)=f(|x|).$ In order to analyze the asymptotic behavior of the function after applying the operator $\mathcal{I}$, we define the following set.
\begin{definition}
For $a\in\RR$ and $\sigma\in \NN_+\cup\{0\}$, define
\begin{equation*}
    \mathcal{W}(a,\,\sigma)=\left\{f\in C^\oo\left((0,\,+\oo)\right):\,\text{for all }l\in\NN_+\cup\{0\},\,f^{(l)}(r)=O\left(r^{-a-l}\ln^\sigma r\right)\text{ as }r\to+\oo\right\}.
\end{equation*}
\end{definition}
\begin{remark}
By the definition of the set $\mathcal{W}(a,\,\sigma)$, the following conclusions can be easily verified:
\begin{enumerate}
    \item For any $\sigma_1,\,\sigma_2\in\NN_+\cup\{0\}$, if $a_1>a_2$, then $\mathcal{W}\left(a_1,\,\sigma_1\right)\subset\mathcal{W}\left(a_2,\,\sigma_2\right)$.
\item For any $a\in\RR$, if $\sigma_1\leq\sigma_2$, then $\mathcal{W}\left(a,\,\sigma_1\right)\subset\mathcal{W}\left(a,\,\sigma_2\right)$.
\item If $f(r)\in\mathcal{W}(a,\,\sigma)$ and $k\in\RR$, then we have $r^kf(r)\in \mathcal{W}(a-k,\,\sigma)$.
\end{enumerate}
\end{remark}
\begin{lemma}
Let $N\geq 2$, $a\in\RR$ and $\sigma\in\NN_+\cup\{0\}$. Define
\begin{equation*}
    H(r)=\begin{cases}
    r^{2-N},\qquad &\text{if }N\geq3,\\
    \ln r,&\text{if }N=2.
    \end{cases}
\end{equation*}
For $f\in\mathcal{W}(a,\,\sigma)\cap C([0,\,+\oo))$ and large $
r>0$, there holds:
\begin{align*}
\mathcal{I}[f](r)=C_1+C_2H(r)+R(r),
\end{align*}
where $\mathcal{I}[f]$ is defined by \eqref{def of I}, $C_1$ and $C_2$ are constants depending on $f$ and do not depend on $r$. Moreover, the remainder $R\in \mathcal{W}(a-2,\,\sigma+2)$.
\label{estimate of I}
\end{lemma}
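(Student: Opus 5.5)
The plan is to split $\mathcal{I}[f]$ into its two nested integrals, $g(s)=\int_0^s t^{N-1}f(t)\md t$ and then $\mathcal{I}[f](r)=\int_0^r s^{1-N}g(s)\md s$. At each stage I will peel off a constant and keep a remainder whose derivatives can be estimated. Throughout, fix $R_0>1$ large. I will use two elementary facts about integrals of $t^{-\beta}\ln^\sigma t$:
\begin{enumerate}
\item if $\beta>1$, then $\int_r^\infty t^{-\beta}\ln^\sigma t\md t=O(r^{1-\beta}\ln^\sigma r)$;
\item if $\beta<1$, then $\int_{R_0}^r t^{-\beta}\ln^\sigma t\md t=O(r^{1-\beta}\ln^\sigma r)$;
\item if $\beta=1$, then $\int_{R_0}^r t^{-1}\ln^\sigma t\md t=O(\ln^{\sigma+1}r)$.
\end{enumerate}

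\textbf{Step 1 (inner integral).} The integrand is $t^{N-1}f(t)=O(t^{N-1-a}\ln^\sigma t)$. There are three cases.
\begin{itemize}
\item If $N-a<0$, then $\int_0^\infty t^{N-1}f\,\md t$ converges. I write $g(s)=A-\int_s^\infty t^{N-1}f\,\md t=A+g_1(s)$, with $g_1=O(s^{N-a}\ln^\sigma s)$.
\item If $N-a>0$, I write $g(s)=\int_0^{R_0}+\int_{R_0}^s$. The first piece is a constant $A$, and the second satisfies $g_1=O(s^{N-a}\ln^\sigma s)$.
\item If $N=a$, the bound picks up an extra log: $g_1=O(\ln^{\sigma+1}s)$.
\end{itemize}
In every case $g_1'=s^{N-1}f$, so for $l\ge1$ we get $g_1^{(l)}=(s^{N-1}f)^{(l-1)}=O(s^{N-a-l}\ln^\sigma s)$, using the Remark and the Leibniz rule. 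Hence $g_1\in\mathcal{W}(a-N,\sigma+1)$.

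\textbf{Step 2 (outer integral).} Now $\mathcal{I}[f](r)=A\int_0^r s^{1-N}\md s+\int_0^r s^{1-N}g_1\md s$. I will not split $g$ near $0$; instead I keep the exact $g$ on $[0,R_0]$ and split only on $[R_0,r]$. Then
\[
A\int_{R_0}^r s^{1-N}\md s=\text{const}+C_2H(r),
\]
with $C_2=A/(2-N)$ if $N\ge3$ and $C_2=A$ if $N=2$. The remaining piece is $\int_{R_0}^r s^{1-N}g_1\md s$, whose integrand lies in $\mathcal{W}(a-1,\sigma+1)$, and I run the same trichotomy.
\begin{itemize}
\item If $a-1>1$, i.e.\ $a>2$, the integral converges at infinity. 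It equals a constant minus $\int_r^\infty$, and the tail is $O(r^{2-a}\ln^{\sigma+1}r)$.
\item If $a<2$, the integral itself is $O(r^{2-a}\ln^{\sigma+1}r)$.
\item If $a=2$, one more log appears and the bound is $O(\ln^{\sigma+2}r)$.
\end{itemize}
This explains why $\sigma+2$ appears in the statement. The constants from the convergent case and from $[0,R_0]$ combine into $C_1$. The remainder $R$ satisfies $R'=r^{1-N}g_1\in\mathcal{W}(a-1,\sigma+1)$, so $R^{(l)}=O(r^{2-a-l}\ln^{\sigma+2}r)$ for $l\ge1$. Together with the $l=0$ estimate, this gives $R\in\mathcal{W}(a-2,\sigma+2)$.

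\textbf{Main obstacle.} The delicate point is the bookkeeping of the borderline exponents $a=N$ and $a=2$. These can coincide when $N=2$, and that is exactly where the log powers accumulate; each such coincidence costs one extra power of $\ln$, and there are two integrations, hence at most $\sigma+2$. A second point needs care: in the case $N-a<0$ with $N\ge3$, the term $A\,s^{1-N}$ integrates to $C_2H(r)$ plus a constant. I must make sure this is genuinely separated from $R$ rather than absorbed into it, since for large $a$ the function $H$ decays more slowly than $r^{2-a}$. Continuity of $f$ on $[0,\infty)$ is used only to guarantee that the integrals from $0$ are finite, noting that $s^{1-N}g(s)=O(s)$ near $0$.
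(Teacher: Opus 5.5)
Your proposal is correct and follows essentially the paper's route. Both arguments integrate twice, first splitting on $a$ versus $N$ to control the inner integral and then on whether the outer tail converges ($a>2$, $a<2$, $a=2$), and both obtain the derivative bounds from $R'$ being the integrand.

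The only difference is bookkeeping. You always peel off $A s^{1-N}$ and integrate it to get $C_2H(r)$. The paper does this only when $a>N$. For $a\le N$ it absorbs the $s^{1-N}$ term into $\mathcal{J}[f]$, effectively taking $C_2=0$, which is legitimate because $H$ then already lies in $\mathcal{W}(a-2,\cdot)$.
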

\begin{proof}
Since $f\in\mathcal{W}(a,\,\sigma)$, there exists some $r_0>0$ and $C>0$ such that $|f(r)|\leq Ct^{-a}\ln^\sigma(r)$ for $r\geq r_0$. Denote $\mathcal{J}[f](s)=s^{1-N}\int_0^s t^{N-1}f(t)\md t$. We shall prove this lemma by considering the following three cases.
\\
\textbf{Case 1: }$a>N$. As $N-a<0$, the integral $\int_0^{+\oo} t^{N-1}f(t)\md t$ converges. It follows that
\begin{align}
    \mathcal{J}[f](s)&=s^{1-N}\left[\int_0^\oo t^{N-1}f(t)\md t-\int_s^\oo t^{N-1}f(t)\md t\right]\nonumber\\
    &=Cs^{1-N}-s^{1-N}\int_s^{+\oo}t^{N-1}f(t)\md t.
    \label{Jf in case 1}
\end{align}
Note that
\begin{align*}
    \left|-\int_s^{+\oo}t^{N-1}f(t)\md t\right|\leq C'\int_s^{+\oo}t^{N-1-a}\ln^\sigma(t)\md t\leq C' s^{N-a}\ln^\sigma(s)\qquad\text{for large } s.
\end{align*}
It follows that 
\begin{align*}
&\frac{\md }{\md s}\left(-\int_s^{+\oo}t^{N-1}f(t)\md t\right)=s^{N-1}f(s)\in\mathcal{W}(a-N+1,\,\sigma),\\
& -\int_s^{+\oo}t^{N-1}f(t)\md t\in\mathcal{W}(a-N,\,\sigma),\\
&R_1(s)\coloneqq -s^{1-N}\int_s^{+\oo}t^{N-1}f(t)\md t\in\mathcal{W}(a-1,\,\sigma).
\end{align*}
Then, by \eqref{Jf in case 1}, for large $r>0$, we have
\begin{align}
    \mathcal{I}[f](r)&=\int_0^1\mathcal{J}[f](s)\md s+C\int_1^r s^{1-N}\md s+\int_1^{+\oo}R_1(s)\md s-\int_r^{+\oo}R_1(s)\md s\nonumber\\
    &=C_1+C_2 H(r)-\int_r^{+\oo}R_1(s)\md s.
    \label{If in case 1}
\end{align}
Similarly, $R(r)=-\int_r^{+\oo}R_1(s)\md s$ satisfies:
\begin{align*}
    &\left|-\int_r^{+\oo}R_1(s)\md s\right|\leq C'\int_r^{+\oo}s^{1-a}\ln^\sigma(s)\md s\leq C'r^{2-a}\ln^\sigma(r),\\
    &R'(r)=R_1(r)\in \mathcal{W}(a-1,\,\sigma).
\end{align*}
Hence, $R(r)\in\mathcal{W}(a-2,\,\sigma)\subset\mathcal{W}(a-2,\,\sigma+2)$.
\\
\textbf{Case 2: }$a=N$. For large $s>r_0$, there holds:
\begin{align*}
    \left|\int_{r_0}^st^{N-1}f(t)\md t\right|\leq C\int_{r_0}^st^{N-1}\cdot t^{-N}\ln^{\sigma}(t)\md t\leq C\ln^{\sigma+1}(s).
\end{align*}
Notice that
\begin{align*}
    \frac{\md}{\md s}\left(\int_{r_0}^st^{N-1}f(t)\md t\right)&=s^{N-1}f(s)\in\mathcal{W}(1,\,\sigma)\subset\mathcal{W}(1,\,\sigma+1).
\end{align*}
Hence, we have
\begin{align*}
    \int_{0}^st^{N-1}f(t)\md t&=\int_{0}^{r_0}t^{N-1}f(t)\md t+\int_{r_0}^st^{N-1}f(t)\md t\in\mathcal{W}(0,\,\sigma+1)
\end{align*}
Then $\mathcal{J}[f](s)=s^{1-N}\int_0^s t^{N-1}f(t)\md t\in\mathcal{W}(N-1,\,\sigma+1)$. There exist some $r_1>0$ and $C>0$ such that
\begin{equation*}
    \left|\mathcal{J}[f](r)\right|\leq Cr^{1-N}\ln^{\sigma+1}(r)\qquad\text{for }r\geq r_1.
\end{equation*}

If $N>2$, then $\int_0^{+\oo}\mathcal{J}[f](s)\md s$ converges and we denote its value by $C_1$. Therefore
\begin{align*}
    \mathcal{I}[f](r)=C_1+R(r)\qquad\text{for }R(r)=-\int_{r}^{+\oo}\mathcal{J}[f](s)\md s.
\end{align*}
For sufficiently large $r>r_1$, we obtain
\begin{align*}
    &|R(r)|\leq C\int_{r}^{+\oo}s^{1-N}\ln^{\sigma+1}(s)\md s\leq C r^{2-N}\ln^{\sigma+1}(r),\\
    &R'(r)=\mathcal{J}[f](s)=s^{1-N}\int_0^s t^{N-1}f(t)\md t\in\mathcal{W}(N-1,\,\sigma+1).
\end{align*}
That is, $R(r)\in\mathcal{W}(N-2,\,\sigma+1)\subset\mathcal{W}(a-2,\,\sigma+2)$.

If $N=2$, for sufficiently large $r>r_1$, we have
\begin{align*}
    \left|\mathcal{I}[f](r)\right|&\leq \left|\int_0^{r_1}\mathcal{J}[f](s)\md s\right|+C\int_{r_1}^rs^{-1}\ln^{\sigma+1}(s)\md s\leq C'\ln^{\sigma+2}(r),\\
    \mathcal{I}[f]'(r)&=\mathcal{J}[f](r)\in\mathcal{W}(1,\,\sigma+1)\subset\mathcal{W}(1,\,\sigma+2).
\end{align*}
Consequently, $\mathcal{I}[f](r)\in\mathcal{W}(0,\,\sigma+2)=\mathcal{W}(a-2,\,\sigma+2)$.
\\
\textbf{Case 3: }$a<N$. Since $f\in\mathcal{W}(a,\,\sigma)$, for large $s$, there holds:
\begin{align*}
    \left|\int_0^st^{N-1}f(t)\md t\right|&\leq \left|\int_0^{r_0}t^{N-1}f(t)\md t\right|+C\int_{r_0}^st^{N-1-a}\ln^{\sigma}(t)\md t\\
    &\leq C's^{N-a}\ln^{\sigma}(s).
 \end{align*}
It follows that
\begin{align*}
&\frac{\md}{\md s}\left(\int_0^st^{N-1}f(t)\md t\right)=s^{N-1}f(s)\in\mathcal{W}(a-N+1,\,\sigma),\\
&\int_0^st^{N-1}f(t)\md t\in \mathcal{W}(a-N,\,\sigma).
\end{align*}
Then we have
\begin{align*}
    &\mathcal{J}[f](s)=s^{1-N}\int_0^st^{N-1}f(t)\md t\md s\in\mathcal{W}(a-1,\,\sigma).
\end{align*}
\\
Thus, there exist some $r_2>0$ and $C>0$ such that
\begin{equation*}
    \left|\mathcal{J}[f](r)\right|\leq Cr^{1-a}\ln^{\sigma}(r)\qquad\text{for }r\geq r_2.
\end{equation*}

If $2<a<N$, then $a-1>1$,  $\int_0^{+\oo}\mathcal{J}[f](s)\md s$ converges and we denote its value by $C_1$. Therefore, we arrive at
\begin{align*}
    \mathcal{I}[f](r)=C_1+R(r)\qquad\text{for }R(r)=-\int_{r}^{+\oo}\mathcal{J}[f](s)\md s.
\end{align*}
For sufficiently large $r>r_2$, we obtain
\begin{align*}
    &|R(r)|\leq C\int_{r}^{+\oo}s^{1-a}\ln^{\sigma}(s)\md s\leq C r^{2-a}\ln^{\sigma}(r),\\
    &R'(r)=\mathcal{J}[f](s)=s^{1-N}\int_0^s t^{N-1}f(t)\md t\in\mathcal{W}(a-1,\,\sigma).
\end{align*}
That is, $R(r)\in\mathcal{W}(a-2,\,\sigma)\subset\mathcal{W}(a-2,\,\sigma+2)$.

If $a=2<N$ or $a<2$, for sufficiently large $r>r_2$, we have
\begin{align*}
    \left|\mathcal{I}[f](r)\right|&\leq \left|\int_0^{r_2}\mathcal{J}[f](s)\md s\right|+C\int_{r_2}^rs^{1-a}\ln^{\sigma}(s)\md s\leq C'r^{2-a}\ln^{\sigma+1}(r),\\
    \mathcal{I}[f]'(r)&=\mathcal{J}[f](r)\in \mathcal{W}(a-1,\,\sigma)\subset\mathcal{W}(a-1,\,\sigma+1).
\end{align*}
This implies that $\mathcal{I}[f](r)\in\mathcal{W}(a-2,\,\sigma+1)\subset\mathcal{W}(a-2,\,\sigma+2)$.
\end{proof}
\vspace{0.5cm}
Using the explicit forms of $v_e$ and $v_o$  given in Proposition \ref{expression of v},  we obtain the following asymptotic estimates.
\begin{lemma}
\label{asymptotic of v}
For $N\geq2$, $m\geq2$ and $b>0$, we have
\begin{align*}
v_e(r,\,b)\in\mathcal{W}\left(c_e,\,0\right),\qquad v_o(r,\,b)\in\mathcal{W}\left(c_o+N-2,\,0\right).
\end{align*}
\end{lemma}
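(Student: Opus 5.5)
The plan is to read off the decay of $v_e$ and $v_o$ from the large-$z$ asymptotics of ${_1F_1}$ in Lemma \ref{property of F}(2). All derivatives will be controlled through the differentiation formula (1), which keeps every derivative inside the same family of ${_1F_1}$'s. We fix $b>0$ and set $z=br^2/2$.

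\textbf{The function $v_e$.} Here $\alpha=(N-c_e)/2$ and $\gamma=N/2$. Since $c_e>N$ when $m\geq3$ (and $c_e=N$ when $m=2$), we have $\alpha\leq0$.

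If $\alpha$ is a nonpositive integer, ${_1F_1}$ is a polynomial. Then $v_e=\mathrm{e}^{-z}\cdot$polynomial, which decays faster than any power together with all its derivatives, so it lies in every $\mathcal{W}(a,0)$.

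Otherwise we apply Kummer's transformation (3):
\[
v_e=\mathrm{e}^{-z}{_1F_1}(\alpha,\gamma,z)={_1F_1}(\gamma-\alpha,\gamma,-z).
\]
We then need the asymptotic expansion on the negative real axis. The form of Lemma \ref{property of F}(2) only covers $|\arg z|\le \pi/2-\delta$, so I would instead use the companion standard expansion (from the same reference \cite{MR174795}):
\[
{_1F_1}(\alpha',\gamma,-z)=\frac{\Gamma(\gamma)}{\Gamma(\gamma-\alpha')}\,z^{-\alpha'}\bigl(1+O(z^{-1})\bigr)+\text{exponentially small terms}.
\]
With $\alpha'=\gamma-\alpha$ this gives $|v_e|\le C z^{\alpha-\gamma}=Cz^{-c_e/2}\sim C r^{-c_e}$.

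An alternative that stays inside Lemma \ref{property of F} as stated: apply (2) directly to ${_1F_1}(\alpha,\gamma,z)$ with $z>0$ real. This gives ${_1F_1}\sim \frac{\Gamma(\gamma)}{\Gamma(\alpha)}\mathrm{e}^{z}z^{\alpha-\gamma}$, so $v_e=O(z^{\alpha-\gamma})=O(r^{-c_e})$. This route is cleaner and I would use it.

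For the derivatives, the key observation is the identity
\[
\frac{\md}{\md z}\Bigl(\mathrm{e}^{-z}{_1F_1}(\alpha,\gamma,z)\Bigr)=\mathrm{e}^{-z}\Bigl(\tfrac{\alpha}{\gamma}{_1F_1}(\alpha+1,\gamma+1,z)-{_1F_1}(\alpha,\gamma,z)\Bigr).
\]
The leading $\mathrm{e}^{z}z^{\alpha-\gamma}$ terms cancel, because $\Gamma(\gamma)/\Gamma(\alpha)\cdot\alpha/\gamma\cdot\Gamma(\gamma+1)/\Gamma(\alpha+1)$ matches. To avoid that bookkeeping, a more robust route is to note that $w(z)=\mathrm{e}^{-z}{_1F_1}(\alpha,\gamma,z)$ equals ${_1F_1}(\gamma-\alpha,\gamma,-z)$. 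This function satisfies the Kummer-type ODE with algebraic solutions at infinity, and its $z$-derivatives are ${_1F_1}(\gamma-\alpha+k,\gamma+k,-z)$ up to constants, again by (1). Each such function is $O(z^{-(\gamma-\alpha)-k})$ by the same argument applied with parameters $(\alpha,\gamma+k)$ after Kummer back.

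Hence
\[
\frac{\md^k}{\md z^k}v_e=O\bigl(z^{-c_e/2-k}\bigr).
\]
By the chain rule with $z=br^2/2$ (Faà di Bruno: each $r$-derivative lowers the power of $r$ by one), we get $v_e^{(l)}(r)=O(r^{-c_e-l})$, i.e. $v_e\in\mathcal{W}(c_e,0)$.

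\textbf{The function $v_o$.} By Proposition \ref{expression of v}, $v_o'(r)=r\,\mathrm{e}^{-z}M_o(z)$ with $M_o={_1F_1}((2-c_o)/2,(N+2)/2,z)$. The same argument (polynomial case trivial; otherwise (2) together with the Kummer/derivative device) gives
\[
\mathrm{e}^{-z}M_o=O\bigl(z^{-(N+c_o)/2}\bigr)
\]
with all $z$-derivatives gaining $z^{-1}$. Therefore $v_o'\in\mathcal{W}(N+c_o-1,0)$. Integrating from $r$ to $\infty$ gives $|v_o|\le Cr^{2-N-c_o}$, and since the derivatives of $v_o$ are those of $v_o'$, we conclude $v_o\in\mathcal{W}(c_o+N-2,0)$.

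\textbf{Main obstacle.} The delicate step is the derivative bounds. Naive termwise differentiation of the asymptotic expansion is not justified. Moreover, the leading exponential parts cancel, so one must work with the Kummer-transformed function, whose derivatives are again ${_1F_1}$'s with shifted parameters. If some shifted parameter hits a nonpositive integer, that function is a polynomial times $\mathrm{e}^{-z}$ and is harmless.
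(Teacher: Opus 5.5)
Your proposal is correct and follows essentially the paper's route. You rewrite via Kummer's transformation as ${_1F_1}(\cdot,\cdot,-z)$, generate derivatives with the differentiation formula, transform back to $\mathrm{e}^{-z}{_1F_1}(\alpha,\gamma+k,z)$, and apply the large-$z$ asymptotics of Lemma~\ref{property of F}(2) together with Fa\`a di Bruno for $z=br^2/2$; the only differences are that you bound the $z$-derivatives before applying the chain rule, and you explicitly integrate $v_o'$ from $r$ to $\infty$ to control $v_o$ itself, which the paper leaves implicit.
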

\begin{proof}
By Kummer’s transformation formula given in Lemma \ref{property of F}, we have
\begin{align*}
v_e(r,\,b)={_1F_1}\left(\dfrac{c_e}{2},\,\dfrac{N}{2},\,-z\right)\qquad\text{for } z=\dfrac{br^2}{2}.
\end{align*}
Together with the differentiation formula from Lemma \ref{property of F} and Fa\`a di Bruno's formula for the derivative of composite functions (see \cite{faadibruno1857}), for $k\in\NN_+\cup\{0\}$, there holds
\begin{align*}
    \frac{\md^k}{\md^k r}v_e(r,\,b)&=\sum_{l=0}^{\lfloor k/2\rfloor}C(k,\,l)r^{k-2l}{_1F_1}\left(\dfrac{c_e}{2}+k-l,\,\dfrac{N}{2}+k-l,-z\right)\\
    &=\sum_{l=0}^{\lfloor k/2\rfloor}C(k,\,l)r^{k-2l}\mathrm{e}^{-z}{_1F_1}\left(\dfrac{N-c_e}{2},\,\dfrac{N}{2}+k-l,z\right)\\
    &=O\left(r^{-c_e-k}\right)\text{  as }r\to+\oo,
\end{align*}
where the last equality follows from the asymptotic estimate given in Lemma \ref{property of F}. This implies that $v_e(r,\,b)\in\mathcal{W}\left(c_e,\,0\right)$. Similarly, we have
\begin{align*}
\frac{\md}{\md r}v_o(r,\,b)&=r\mathrm{e}^{-z}{_1F_1}\left(\dfrac{2-c_o}{2},\,\dfrac{N+2}{2},\,z\right)=r\cdot{_1F_1}\left(\dfrac{N+c_o}{2},\,\dfrac{N+2}{2},\,-z\right),\\
\frac{\md^{k+1}}{\md^{k+1} r}v_o(r,\,b)&=r\sum_{l=0}^{\lfloor k/2\rfloor}C(k,\,l)r^{k-2l}{_1F_1}\left(\dfrac{N+c_o}{2}+k-l,\,\dfrac{N+2}{2}+k-l,-z\right)\\
&\qquad+k\sum_{l=0}^{\lfloor (k-1)/2\rfloor}C(k,\,l)r^{k-1-2l}{_1F_1}\left(\dfrac{N+c_o}{2}+k-1-l,\,\dfrac{N+2}{2}+k-1-l,-z\right)\\
&=O\left(r^{1-N-c_o-k}\right) \text{  as }r\to+\oo.
\end{align*}
Hence, we obtain $v_o(r,\,b)\in\mathcal{W}\left(N+c_o-2,\,0\right)$.
\end{proof}

\begin{proposition}
\label{expression of Ik}
Let $N\geq2,\,j\in \NN_+$ and $j>1$. Denote by  $\mathcal{I}^k$ the $k$-th iteration of $\mathcal{I}$ for $k\in\NN_+$, i.e. $\mathcal{I}^k[\cdot]=\mathcal{I}\left[\mathcal{I}^{k-1}[\cdot]\right]$. For  large $r>0$, we have
\begin{align*}
    \mathcal{I}^k\left[v_e\right](r)&=\sum_{i=1}^{n_k}f_i(r)+Q_{2k-2}(r)\qquad\text{   for }m=2j,\\
    \mathcal{I}^{k}\left[v_o\right](r)&=\sum_{i=1}^{\tilde n_k}\tilde{f}_i(r)+\tilde{Q}_{2k-2}(r)\qquad\text{   for }m=2j+1,
\end{align*}
where $f_i\in \mathcal{W}\left(N-2k,\,\sigma_i\right)$  and $\tilde{f}_i\in \mathcal{W}\left(N-2k,\,\tilde{\sigma}_i\right)$ with $\sigma_i,\,\tilde{\sigma}_i\in\NN_+\cup\{0\}$. $Q_{2k-2}$ and $\tilde{Q}_{2k-2}$ are polynomials of degree at most $(2k-2)$  containing only even powers.
\end{proposition}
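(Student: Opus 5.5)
We argue by induction on $k$, using Lemma \ref{estimate of I} as the inductive engine and Lemma \ref{asymptotic of v} as the base input. The one extra fact we need is the action of $\mathcal{I}$ on even monomials. For $p\in\NN_+\cup\{0\}$, a direct computation from \eqref{def of I} gives
\begin{equation*}
\mathcal{I}[r^{2p}](r)=\frac{r^{2p+2}}{(2p+2)(2p+N)},
\end{equation*}
with no lower-order terms, since the inner integral $\int_0^s t^{N-1+2p}\md t$ has no constant of integration. Hence $\mathcal{I}$ maps an even polynomial of degree at most $2k-2$ to an even polynomial of degree at most $2k$. By linearity of $\mathcal{I}$, at each step it suffices to treat the $\mathcal{W}$-pieces and the polynomial piece separately.

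\textbf{Base case $k=1$.} By Lemma \ref{asymptotic of v}, $v_e\in\mathcal{W}(c_e,0)$, and $v_e$ is smooth on $[0,+\oo)$. Since $c_e\geq N$, we have $v_e\in\mathcal{W}(N,0)$ by the inclusion remark. Applying Lemma \ref{estimate of I} gives
\begin{equation*}
\mathcal{I}[v_e]=C_1+C_2H+R,\qquad R\in\mathcal{W}(N-2,2).
\end{equation*}
We place the constant $C_1$ into $Q_0$. The function $H$ is either $r^{2-N}\in\mathcal{W}(N-2,0)$ or, when $N=2$, $\ln r\in\mathcal{W}(0,1)=\mathcal{W}(N-2,1)$. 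So the claim holds with $n_1=2$. For $v_o$ we have $c_o+N-2\geq N$ when $c_o\geq 2$; in fact $c_o\geq1$ suffices, giving $v_o\in\mathcal{W}(N-1,0)\subset\mathcal{W}(N-2,0)$ when $c_o=1$. Either way the lemma applies with $a\geq N-2$, and the remainder lands in $\mathcal{W}(N-2,\cdot)$. In general we only need $a\geq N-2k$ at stage $k$, using the inclusion $\mathcal{W}(a_1,\sigma_1)\subset\mathcal{W}(a_2,\sigma_2)$ for $a_1>a_2$ together with the $\sigma$-monotonicity. Mixing the strict inclusion in $a$ with a change in $\sigma$ is harmless, because a strictly smaller $a$ absorbs any logarithmic factor.

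\textbf{Inductive step.} Suppose
\begin{equation*}
\mathcal{I}^k[v]=\sum_i f_i+Q_{2k-2},\qquad f_i\in\mathcal{W}(N-2k,\sigma_i).
\end{equation*}
Write $\mathcal{I}^{k+1}[v]=\mathcal{I}[\mathcal{I}^k[v]]$. One subtlety is that the decomposition holds only for large $r$, while $\mathcal{I}$ integrates from $0$. We handle this by replacing each $f_i$ by $\chi f_i$ for a cutoff $\chi$ vanishing near $0$ and equal to $1$ for large $r$. We then set $g=\mathcal{I}^k[v]-Q_{2k-2}-\sum_i\chi f_i$, which is continuous and compactly supported. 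For such $g$, $\mathcal{I}[g](r)=C+C'H(r)$ exactly for large $r$. Each $\chi f_i$ lies in $\mathcal{W}(N-2k,\sigma_i)\cap C([0,+\oo))$, so Lemma \ref{estimate of I} gives
\begin{equation*}
\mathcal{I}[\chi f_i]=C_1+C_2H+R_i,\qquad R_i\in\mathcal{W}(N-2k-2,\sigma_i+2).
\end{equation*}
Since $H\in\mathcal{W}(N-2,1)\subset\mathcal{W}(N-2k-2,1)$, the term $C_2H$ is also of the required type. All constants are collected into the even polynomial. Finally, $\mathcal{I}[Q_{2k-2}]$ is an even polynomial of degree at most $2k$ by the monomial formula. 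This yields the statement for $k+1$.

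\textbf{Expected obstacle.} The main difficulty is the bookkeeping that Lemma \ref{estimate of I} requires $f\in C([0,+\oo))$ while the asymptotic splitting is only valid for large $r$. The cutoff argument above resolves this: it shows that the discrepancy contributes only constants and multiples of $H$. We must also check that $\mathcal{I}^k[v]$ stays continuous at $0$, which holds since $\mathcal{I}$ preserves $C([0,+\oo))$. Apart from this, the argument requires only the exponent inequalities $c_e\geq N$ and $c_o\geq1$, both of which follow directly from \eqref{def of ce} and \eqref{def of co} when $m\geq2$.
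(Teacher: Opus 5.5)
Your overall scheme matches the paper's. You run an induction on $k$, using Lemma~\ref{asymptotic of v} and Lemma~\ref{estimate of I} for the base case. The inductive step separates the $\mathcal{W}$-pieces, the even polynomial, and the leftover constants and multiples of $H$. Your cutoff $\chi$ is a tidier substitute for the paper's splitting of both integrals at $r_0$, and your formula for $\mathcal{I}[r^{2p}]$ is correct. The even base case and the inductive step are fine.

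\textbf{The gap is the odd base case.} Lemma~\ref{estimate of I} sends $f\in\mathcal{W}(a,\sigma)$ to a remainder in $\mathcal{W}(a-2,\sigma+2)$, so it loses two powers of $r$. For $\mathcal{I}[v_o]$ to be a constant plus terms in $\mathcal{W}(N-2,\cdot)$, you therefore need $v_o\in\mathcal{W}(a,0)$ with $a\geq N$, that is $c_o\geq 2$. Your claim that $c_o\geq 1$ suffices is wrong: with $a\geq N-1$ (let alone $a\geq N-2$), the lemma only places the remainder in $\mathcal{W}(N-3,\cdot)$.

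This is not merely a lossy estimate. For $m=3$ one has $1<c_o<2$, so the parameter $\frac{2-c_o}{2}$ is not a nonpositive integer. Lemma~\ref{property of F} then gives $v_o(r)\sim Cr^{2-N-c_o}$ with $C\neq 0$. Consequently $\mathcal{I}[v_o]$ contains a term of exact order $r^{4-N-c_o}$, which grows like $r^{2-c_o}$ when $N=2$. That term cannot be absorbed into a constant $\tilde Q_0$ plus terms in $\mathcal{W}(N-2,\sigma)$. So the statement is false for $j=1$, and any correct proof must use the hypothesis $j>1$, which yours never does.

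The repair is short:
\begin{itemize}
\item For $m=2j+1$ with $j\geq 2$, the inequality $c_o\geq 2$ is equivalent to $(N-2)^2+4(m-1)(N-1)\geq (N+2)^2$, i.e.\ $(m-1)(N-1)\geq 2N$.
\item This holds because $m-1\geq 4$ and $N\geq 2$.
\item Hence $v_o\in\mathcal{W}(c_o+N-2,0)\subset\mathcal{W}(N,0)$, and the odd base case goes through exactly like the even one.
\end{itemize}
This is precisely the inequality $N+c_o-2\geq N$ that the paper invokes at this step.
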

\begin{proof}
We proceed by induction. For the base case $k=1$, by Lemma \ref{asymptotic of v}, we have $v_e\in\mathcal{W}\left(c_e,\,0\right)$ and $v_o\in\mathcal{W}\left(N+c_o-2,\,0\right)$. From Lemma \ref{estimate of I}, for large $r>0$, there holds:
\begin{align*}
    \mathcal{I}\left[v_e\right](r)&=C_1+C_2H(r)+R_1(r),\\
    \mathcal{I}\left[v_o\right](r)&=\tilde{C}_1+\tilde{C}_2H(r)+\tilde{R}_1(r),
\end{align*}
where $H\in\mathcal{W}\left(N-2,\,1\right),\,R_1\in\mathcal{W}\left(c_e-2,\,2\right)$ and $\tilde{R}_1\in\mathcal{W}\left(N+c_o-4,\,2\right)$. Since $c_e\geq N$ for $m=2j$ and $N+c_o-2\geq N$ for $m=2j+1$, we have $R_1\in\mathcal{W}\left(N-2,\,2\right)$ and $\tilde{R}_1\in\mathcal{W}(N-2,\,2)$. Thus, the case $k=1$ is proved.

Now, assuming the statement holds for $1\leq k\leq j-2$, we prove it for $k+1$. In the case $m=2j$, suppose that for some $r_0>0$, we have $\mathcal{I}^k\left[v_e\right](r)=\sum_{i=1}^{n_k} f_i(r)+Q_{2k-2}(r)$ for $r\geq r_0$. It follows that
\begin{align*}
    \mathcal{I}^{k+1}\left[v_e\right](r)&=\int_{0}^{r_0}s^{1-N}\int_0^s t^{N-1}\mathcal{I}^k\left[v_e\right](t)\md t\md s\\
    &\qquad+\int_{r_0}^rs^{1-N}\left[\int_0^{r_0}t^{N-1}\mathcal{I}^k\left[v_e\right](t)\md t+\int_{r_0}^st^{N-1}\mathcal{I}^k\left[v_e\right](t)\md t\right]\md s\\
    &=C'+\int_{r_0}^rs^{1-N}\left[C+\int_{r_0}^st^{N-1}\mathcal{I}^k\left[v_e\right](t)\md t\right]\md s\\
    &=C'+CH(r)+\int_{r_0}^rs^{1-N}\int_{r_0}^st^{N-1}\mathcal{I}^k\left[v_e\right](t)\md t\md s.
\end{align*}
Now substitute $\mathcal{I}^k\left[v_e\right](r)=\sum_{i=1}^{n_k} f_i(r)+Q_{2k-2}(r)$ into the above integral. Notice that $Q_{2k-2}$ is a polynomial of degree at most $(2k-2)$  containing only even powers, then $$\int_{r_0}^rs^{1-N}\int_{r_0}^st^{N-1} Q_{2k-2}(t)\md t\md s$$ is the sum of a polynomial of degree at most $2k$  containing only even powers and a constant multiple of $H(r)$. Moreover, for $f_i\in\mathcal{W}\left(N-2k,\,\sigma_i\right)$, repeating the proof of Lemma  \ref{estimate of I}, except that the lower limits of the two integrals in $\mathcal{I}$ are replaced by $r_0$, we obtain
\begin{align*}
    \int_{r_0}^rs^{1-N}\int_{r_0}^st^{N-1}f_i(t)\md t\md s
    &=C'+CH(r)+R_{f_i}(r),
\end{align*}
for large $r>0$, where $R_{f_i}\in \mathcal{W}\left(N-2k-2,\,\sigma_i+2\right)$. Therefore, for sufficiently large $r>0$, we obtain
\begin{align*}
    \mathcal{I}^{k+1}\left[v_e\right](r)&=\sum_{i=1}^{n_{k+1}}\bar f_i(r)+Q_{2k}(r),
\end{align*}
for $\bar f_i\in\mathcal{W}(N-2(k+1),\,\sigma_i+2)$ and $Q_{2k}$ is a polynomial of degree at most $2k$  containing only even powers. For the case $m=2j+1$, the computation for $\mathcal{I}^{k+1}\left[v_o\right]$ is similar to the above, so we omit it here. Thus we complete the proof of this proposition.
\end{proof}
\begin{remark}
Let $r=|x|$, then $\mathcal{I}^k\left[v_e\right](r)$ and $\mathcal{I}^k\left[v_o\right](r)$ are smooth functions in $\RR^N$. In particular, since $v_e(r,\,b)$ and $v_o(r,\,b)$ can be expanded as power series containing only even powers in the neighborhood of 0, $\mathcal{I}^k\left[v_e\right](r)$ and $\mathcal{I}^k\left[v_o\right](r)$ also admit expansions in some neighborhood of 0 containing only even powers, which implies that $\mathcal{I}^k\left[v_e\right](r)$ and $\mathcal{I}^k\left[v_o\right](r)$ are smooth at the origin.
\end{remark}
\begin{proposition}
\label{estimate of Phi radially}
For $N\geq2$, $m\geq2$ and $b>0$, we denote  $j=\lfloor m/2\rfloor$ and $r=|x|$. Define
\begin{align}
\label{def of extremal}
    \Phi_{b,m}(x)=\Phi_{b,m}(r)=\begin{cases}
    v_e(r,\,b),&\text{ if }m=2,\\
    v_o(r,\,b),&\text{ if }m=3,\\
    \mathcal{I}^{j-1}\left[v_e\right](r)-Q_{2j-4}(r),&\text{ if } m=2j\quad\text{ for }j>1,\\
    \mathcal{I}^{j-1}\left[v_o\right](r)-\tilde{Q}_{2j-4}(r),&\text{ if } m=2j+1\quad\text{ for }j>1,
    \end{cases}
\end{align}
where $Q_{2j-4}$ and $\tilde{Q}_{2j-4}$ are polynomials of degree at most $(2j-4)$  containing only even powers given in Proposition \ref{expression of Ik}. Then the following equation holds on $\RR^N$:
\begin{equation}
\label{relationship between Phi and v}
    \Delta^{j-1}\Phi_{b,m}=\begin{cases}
    v_e\,,\qquad&\text{if $m=2j$},\\
    v_o\,,&\text{if $m=2j+1$}.
    \end{cases}
\end{equation}
Furthermore, for all $m\geq2$, we have $\Phi_{b,m}(r)\in\mathcal{W}(N-m+2,\,\sigma)$ for some $\sigma\in\NN_+\cup\{0\}$ depending on $m$.
\end{proposition}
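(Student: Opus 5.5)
The plan has two parts: first the identity \eqref{relationship between Phi and v}, then the membership $\Phi_{b,m}\in\mathcal{W}(N-m+2,\,\sigma)$.

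\textbf{The identity.} For $m=2,3$ we have $j=1$, so $\Delta^{0}\Phi_{b,m}=\Phi_{b,m}$ and the identity holds by definition. For $j>1$ I will use that $\Delta\left(\mathcal{I}[f](|x|)\right)=f(|x|)$ for smooth $f$ on $[0,\infty)$; by the Remark after Proposition \ref{expression of Ik}, each iterate $\mathcal{I}^k[v_e]$, $\mathcal{I}^k[v_o]$ is again smooth and even near $0$, so this can be iterated. It gives $\Delta^{j-1}\mathcal{I}^{j-1}[v_e]=v_e$, and likewise for $v_o$. The polynomial $Q_{2j-4}(r)$ is a polynomial in $|x|^2$ of degree at most $j-2$. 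Since $\Delta$ lowers the degree in $|x|^2$ by one, we get $\Delta^{j-1}Q_{2j-4}=0$. Subtracting therefore does not change the identity. This also shows $\Phi_{b,m}\in C^\infty(\RR^N)$.

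\textbf{Asymptotics, $j=1$.} By Lemma \ref{asymptotic of v}, $v_e\in\mathcal{W}(c_e,0)$. For $m=2$ we have $c_e=N=N-m+2$, so the claim holds. For $m=3$, $v_o\in\mathcal{W}(c_o+N-2,0)$ and $c_o\ge 1$, because $c_o^2+c_o(N-2)=2(N-1)$ is satisfied by $c_o=1$ when $m=3$. Hence $c_o+N-2\ge N-1=N-m+2$, and the monotonicity in the Remark on $\mathcal{W}$ gives the claim.

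\textbf{Asymptotics, $j>1$.} Apply Proposition \ref{expression of Ik} with $k=j-1$. For large $r$,
\[
\Phi_{b,m}=\sum_i f_i
\]
with $f_i\in\mathcal{W}(N-2j+2,\sigma_i)$. Setting $\sigma=\max_i\sigma_i$, this lies in $\mathcal{W}(N-2j+2,\sigma)$. For even $m=2j$, $N-2j+2=N-m+2$ exactly. For odd $m=2j+1$, $N-2j+2=N-m+3>N-m+2$, so the inclusion part (1) of the Remark on $\mathcal{W}$ gives $\Phi_{b,m}\in\mathcal{W}(N-m+2,\sigma)$. Since $\mathcal{W}$ is defined by behavior on $(0,\infty)$ with bounds only as $r\to\infty$, and $\Phi_{b,m}$ is smooth on $(0,\infty)$, the large-$r$ representation is all that is needed.

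\textbf{Main obstacle.} The delicate point is that the subtracted polynomial in \eqref{def of extremal} is exactly the polynomial part $Q_{2k-2}$ of Proposition \ref{expression of Ik} at level $k=j-1$, so the degrees $2j-4=2k-2$ match. Without this cancellation the growing polynomial would destroy the decay. I also need to check that the pieces $C_1$ and $C_2H(r)$ produced at each induction step were absorbed into either $Q$ or the $f_i$. Constants are degree-zero even polynomials, and $H\in\mathcal{W}(N-2,1)\subset\mathcal{W}(N-2k,1)$ for $k\ge1$, so both are indeed absorbed in Proposition \ref{expression of Ik}, which I take as given.
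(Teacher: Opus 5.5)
Your argument follows the paper's proof essentially step for step. The identity comes from $\Delta\mathcal{I}[f]=f$ together with $\Delta^{j-1}Q_{2j-4}=0$. The cases $m=2,3$ come from Lemma \ref{asymptotic of v}. The case $j>1$ comes from Proposition \ref{expression of Ik} at level $k=j-1$, followed by the inclusion $\mathcal{W}(N-2j+2,\sigma)\subset\mathcal{W}(N-m+2,\sigma)$.

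There is one slip in the $m=3$ case. The value $c=1$ does not solve $c^2+c(N-2)=2(N-1)$: the left side equals $N-1$ there, and $c_o=1$ is actually the $m=2$ value. So your stated reason for $c_o\ge 1$ is false. The correct reason is that $c^2+c(N-2)-2(N-1)$ equals $1-N<0$ at $c=1$ and is increasing for $c>0$. Hence $c_o\in(1,2)$, as the paper notes, and your conclusion $c_o+N-2\ge N-1=N-m+2$ stands.
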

\begin{proof}
Notice that
\begin{equation*}
    \Delta \left(\mathcal{I}\left[v_e\right]\right)= \frac{\md^2 \mathcal{I}\left[v_e\right]}{\md^2 r}(r)+\frac{N-1}{r}\frac{\md \mathcal{I}\left[v_e\right]}{\md r}(r)=v_e.
\end{equation*}
Additionally, for $j>1$, $\Delta^{j-1}Q_{2j-4}=0$ since the degree of  $Q_{2j-4}$ is less than $2j-2$. Therefore, we obtain $\Delta^{j-1}\Phi_{b,m}=v_e$ for $m=2j$. By a similar argument, we have $\Delta^{j-1}\Phi_{b,m}=v_o$ for $m=2j+1$.

As for the asymptotic properties of $\Phi_{b,m}(r)$, by Lemma \ref{asymptotic of v}, we obtain:
\begin{align*}
    \Phi_{b,2}(r)&=v_e(r,\,b)\in \mathcal{W}\left(c_e,\,0\right)=\mathcal{W}\left(N,\,0\right),\\
    \Phi_{b,3}(r)&=v_o\left(r,\,b\right)\in \mathcal{W}\left(c_o+N-2,\,0\right)\subset \mathcal{W}\left(N-1,\,0\right).
\end{align*}
Here we used the fact that $c_e=N$ and $c_o\in(1,2)$ when $j=1$ and $N\geq2$. In addition, by Lemma \ref{expression of Ik}, for sufficiently large $r>0$, we have the following result for $j>1$:
\begin{align*}
    \Phi_{b,m}(r)=\begin{cases}
    \sum_{i=1}\limits^{n}f_i(r),\qquad&\text{if }m=2j,\\
    \sum_{i=1}\limits^{\tilde n}\tilde{f}_i(r),&\text{if }m=2j+1,
    \end{cases}
\end{align*}
where $f_i\in\mathcal{W}\left(N-2j+2,\,\sigma_i\right)$ and $\tilde{f}_i\in\mathcal{W}\left(N-2j+2,\,\tilde{\sigma}_i\right)$ for some $\sigma_i,\,\tilde{\sigma}_i\in\NN_+\cup\{0\}$. Thus, we can choose $\sigma=\max\{\sigma_1,\,\dots,\,\sigma_n\}\text{ for }m=2j\text{ and }\sigma= \{\tilde{\sigma}_1,\,\dots,\,\tilde{\sigma}_{\tilde{n}}\}$ for $m=2j+1$ such that
\begin{align*}
\Phi_{b,m}(r)\in\mathcal{W}(N-2j+2,\,\sigma)\subset \mathcal{W}\left(N-m+2,\,\sigma\right).
\end{align*}
\end{proof}

As an ending to this section, we provide  the analysis for the case  $N=1$. We shall prove that there exists
 $\Phi_{b,m}\in C^\oo\left(\RR\right)$ such that $\Phi_{b,m}^{(m-2)}(x)=\mathrm{e}^{-bx^2}$. Define
 \begin{equation*}
     \mathcal{L}_k(x)=\frac{1}{(k-1)!}\int_0^x(x-t)^{k-1} \mathrm{e}^{-bt^2}\md t\qquad\text{for }k\in\NN_+.
 \end{equation*}
 By a direct computation, we readily obtain that
 \begin{align}
 \label{derivative of Lk}
     \frac{\md}{\md x}\mathcal{L}_{k+1}(x)=\mathcal{L}_k(x).
 \end{align}
 It follows that $\mathcal{L}_k\in C^\oo(\RR)$. Additionally, we have the following estimate.
 \begin{lemma}
 \label{estimate of Lk}
 For any $k\in\NN_+$, there holds $\mathcal{L}_k(x)=O\left(|x|^{k-1}\right)$ as $|x|\to+\oo$.
 \end{lemma}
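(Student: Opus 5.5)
We bound the integrand directly; no induction is needed, although the recursion \eqref{derivative of Lk} gives an alternative route. For $k=1$ we have $\mathcal{L}_1(x)=\int_0^x \mathrm{e}^{-bt^2}\md t$. Hence $|\mathcal{L}_1(x)|\le \int_0^{+\oo}\mathrm{e}^{-bt^2}\md t=\frac12\sqrt{\pi/b}$ for all $x$, which is exactly $O(|x|^0)$.

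For general $k\ge1$ and $x>0$, the integration variable satisfies $0\le t\le x$, so $0\le x-t\le x$ and $(x-t)^{k-1}\le x^{k-1}$. Therefore
\begin{align*}
|\mathcal{L}_k(x)|\le \frac{x^{k-1}}{(k-1)!}\int_0^{x}\mathrm{e}^{-bt^2}\md t\le \frac{x^{k-1}}{(k-1)!}\cdot\frac12\sqrt{\frac{\pi}{b}}.
\end{align*}
For $x<0$, substitute $t=-s$. This gives $\mathcal{L}_k(x)=-\frac{1}{(k-1)!}\int_0^{|x|}(x+s)^{k-1}\mathrm{e}^{-bs^2}\md s$. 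Here $|x+s|=|x|-s\le |x|$, so the same bound holds with $|x|^{k-1}$ in place of $x^{k-1}$. Combining the two signs, $|\mathcal{L}_k(x)|\le C_{k,b}|x|^{k-1}$ for all $x\neq0$, and in particular as $|x|\to+\oo$.

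As an alternative that uses \eqref{derivative of Lk}, assume $|\mathcal{L}_k(x)|\le C|x|^{k-1}$ for $|x|\ge1$. Since $\mathcal{L}_{k+1}(0)=0$, we can write $\mathcal{L}_{k+1}(x)=\int_0^x\mathcal{L}_k$. Splitting the integral at $\pm1$ and using continuity on $[-1,1]$ gives $|\mathcal{L}_{k+1}(x)|\le C'+C|x|^{k}/k$, which closes the induction. There is no real obstacle here. The only point needing care is the sign bookkeeping for negative $x$, which the substitution $t=-s$ (or the evenness of the Gaussian) handles.
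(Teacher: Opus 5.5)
Your proof is correct, and it takes a more direct route than the paper's. The paper first proves the parity relation $\mathcal{L}_k(x)=(-1)^k\mathcal{L}_k(-x)$ to reduce to $x\to+\oo$; this is the same substitution $t\mapsto -t$ you use. It then expands $(x-t)^{k-1}$ binomially. Each $\int_0^x t^l\mathrm{e}^{-bt^2}\md t$ is written as the full Gaussian moment minus the tail $\int_x^{+\oo}t^l\mathrm{e}^{-bt^2}\md t$. An integration-by-parts induction on $l$ shows these tails are $O\left(x^{l-1}\mathrm{e}^{-bx^2}\right)$.

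You avoid the tail analysis entirely. The pointwise bound $|x-t|\le|x|$ on the interval of integration, together with $\int_0^{+\oo}\mathrm{e}^{-bt^2}\md t<\oo$, gives the explicit global estimate $|\mathcal{L}_k(x)|\le \frac{1}{2(k-1)!}\sqrt{\pi/b}\,|x|^{k-1}$ for every $x$, not only asymptotically.

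The paper's longer route gives finer information in return. It shows that $\mathcal{L}_k$ equals a polynomial of degree at most $k-1$, with coefficients built from Gaussian moments, up to an error $O\left(x^{k-2}\mathrm{e}^{-bx^2}\right)$. That refinement is not needed afterwards: only the $O\left(|x|^{k-1}\right)$ bound enters the $N=1$ sharpness argument, combined with \eqref{derivative of Lk} to control the derivatives of $\Phi_{b,m}$.

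Your alternative induction through $\mathcal{L}_{k+1}(x)=\int_0^x\mathcal{L}_k(t)\md t$ is also valid, since $\mathcal{L}_{k+1}(0)=0$ and $\mathcal{L}_k$ is bounded on $[-1,1]$.
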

 \begin{proof}
 Notice that
 \begin{align*}
     \mathcal{L}_k(x)&=\frac{1}{(k-1)!}\int_{0}^x(x-t)^{k-1}\mathrm{e}^{-bt^2}\md t\\
     &=\frac{1}{(k-1)!}\int_{0}^{-x}(x+t)^{k-1}\mathrm{e}^{-bt^2}(-1)\md t\\
     &=\frac{(-1)^k}{(k-1)!}\int_{0}^{-x}(-x-t)^{k-1}\mathrm{e}^{-bt^2}\md t\\
     &=(-1)^k \mathcal{L}_k(-x).
 \end{align*}
 Therefore, we only need to consider the case $x\to+\oo$. Expanding $(x-t)^{k-1}$, we obtain
 \begin{align}
 \label{expansion of Lk}
     \mathcal{L}_k(x)&=\sum_{l=0}^{k-1}C(k,\,l)x^{k-1-l}\int_0^x t^l\mathrm{e}^{-bt^2}\md t\nonumber\\
     &=\sum_{l=0}^{k-1}C(k,\,l)x^{k-1-l}\int_0^{+\oo} t^l\mathrm{e}^{-bt^2}\md t-\sum_{l=0}^{k-1}C(k,\,l)x^{k-1-l}\int_x^{+\oo} t^l\mathrm{e}^{-bt^2}\md t.
 \end{align}
 Next, we will prove $\int_x^{+\oo} t^l\mathrm{e}^{-bt^2}\md t=O\left(x^{l-1}\mathrm{e}^{-bx^2}\right)$ as $x\to+\oo$ by induction. For $l=0$ and $l=1$, we have
 \begin{align*}
     \int_x^{+\oo}\mathrm{e}^{-bt^2}\md t&\leq\frac{1}{2bx}\int_x^{+\oo}2bt\cdot\mathrm{e}^{-bt^2}\md t=\frac{1}{2bx}\mathrm{e}^{-bx^2},\\
     \int_x^{+\oo}t\mathrm{e}^{-bt^2}\md t&=-\frac{1}{2b}\int_x^{+\oo}\md \left(\mathrm{e}^{-bt^2}\right)=\frac{1}{2b}\mathrm{e}^{-bx^2}.
 \end{align*}
 Combining the fact that
 \begin{align*}
     \int_{x}^{+\oo}t^l\mathrm{e}^{-bt^2}\md t&=-\frac{1}{2b}\int_x^{+\oo}t^{l-1}\md \left(\mathrm{e}^{-bt^2}\right)=\frac{1}{2b}x^{l-1}\mathrm{e}^{-bx^2}+\frac{l-1}{2b}\int_x^{+\oo}t^{l-2}\mathrm{e}^{-bt^2}\md t,
 \end{align*}
 we conclude that $\int_x^{+\oo} t^l\mathrm{e}^{-bt^2}\md t=O\left(x^{l-1}\mathrm{e}^{-bx^2}\right)$. Substituting this estimate into \eqref{expansion of Lk}, we obtain $\mathcal{L}_k(x)=O\left(|x|^{k-1}\right)$.
 \end{proof}

Hence, for the special case $N=1$, we set  $\Phi_{b,m}$ as
\begin{align}
\label{def of Phi when N=1}
    \Phi_{b,m}(x)=\begin{cases}
    \mathrm{e}^{-bx^2},\qquad&\text{if }m=2,\\
    \mathcal{L}_{m-2}(x),&\text{if }m\geq3.
    \end{cases}
\end{align}
It follows that $\Phi_{b,m}^{(m-2)}(x)=\mathrm{e}^{-bx^2}$ and $\Phi_{b,m}^{(k)}(x)=O\left(|x|^{m-3-k}\right)$ as $|x|\to+\oo$ for all $k\in\NN_+\cup\{0\}$.
\section{Proofs of Theorem \ref{extremal}, Corollary \ref{cor m order HUP} and Corollary \ref{cor Mazya}}\label{sec 5}
In this section, we first prove Theorem \ref{extremal}, namely that the constant $\mu_m(N)$ in the $m$-th order uncertainty principle \eqref{eq m order HUP} is optimal, and we establish the existence of extremal functions. Then we prove Corollary \ref{cor m order HUP} and  Corollary \ref{cor Mazya}. We introduce the multi-index notation. For $\beta=\left(\beta_1,\,\dots,\,\beta_N\right)$ with each $\beta_i\in \NN_+\cup\{0\}$, define
\begin{equation*}
    |\beta|=\sum_{i=1}^N\beta_i,\qquad D^\beta f=\frac{\pt^{|\beta|}f}{\pt x^{\beta_1}_{1}\cdots\pt x^{\beta_N}_N}\qquad\text{for }f\in C^\oo\left(\RR^N\right).
\end{equation*}
\begin{lemma}
Let $N\geq2$, $m\geq2$ and $b>0$. Then, for any multi-index $\beta$ and $\Phi_{b,m}(x)$ given by \eqref{def of extremal}, as $r=|x|\to+\oo$, we have
\begin{equation*}
    D^\beta\Phi_{b,m}(x)=O\left(r^{m-2-N-|\beta|}\ln^{\sigma}(r)\right),
\end{equation*}
where $\sigma\in\NN_+\cup\{0\}$ depending on $m$.
\label{estimate of Phi}
\end{lemma}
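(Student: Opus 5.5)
Since $\Phi_{b,m}(x)=\Phi_{b,m}(r)$ is radial, the natural route is to turn the radial estimate of Proposition \ref{estimate of Phi radially} into a Cartesian one. That proposition gives $\Phi_{b,m}\in\mathcal{W}(N-m+2,\,\sigma)$, which means
\[
\phi^{(l)}(r)=O\left(r^{m-2-N-l}\ln^\sigma r\right)\quad\text{for every } l\geq0,
\]
where $\phi=\Phi_{b,m}$ as a function of $r$. It therefore suffices to express $D^\beta$ of a radial function through its radial derivatives, with coefficients homogeneous of the correct degree.

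\textbf{Step 1: structural formula.} I will show by induction on $|\beta|$ that, for $x\neq0$,
\[
D^\beta\left[\phi(|x|)\right]=\sum_{l=1}^{|\beta|}\phi^{(l)}(r)\,p_{\beta,l}(x),
\]
for $|\beta|\geq1$, where each $p_{\beta,l}$ is smooth on $\RR^N\setminus\{0\}$ and homogeneous of degree $l-|\beta|$. For $|\beta|=0$ the formula is just $\phi$ itself.
\begin{itemize}
\item The base case is $\partial_i\phi(|x|)=\phi'(r)\,x_i/r$, and $x_i/r$ is homogeneous of degree $0$.
\item For the inductive step, apply $\partial_i$ to a term $\phi^{(l)}(r)p_{\beta,l}(x)$. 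This produces $\phi^{(l+1)}(r)(x_i/r)p_{\beta,l}$, which is homogeneous of degree $(l+1)-(|\beta|+1)$, together with $\phi^{(l)}\partial_ip_{\beta,l}$, which is homogeneous of degree $l-(|\beta|+1)$.
\end{itemize}
So the degrees match at every step. Homogeneity and smoothness on the unit sphere then give $|p_{\beta,l}(x)|\leq C_{\beta,l}\,r^{l-|\beta|}$ for $r\geq1$.

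\textbf{Step 2: combine.} For large $r$, each term satisfies
\[
\left|\phi^{(l)}(r)p_{\beta,l}(x)\right|\leq C\,r^{m-2-N-l}\ln^\sigma(r)\cdot r^{l-|\beta|}=C\,r^{m-2-N-|\beta|}\ln^\sigma r.
\]
Summing the finitely many terms gives the claim. The same $\sigma$ from Proposition \ref{estimate of Phi radially} works for every $\beta$; if needed, enlarge it harmlessly using the inclusion $\mathcal{W}(a,\sigma_1)\subset\mathcal{W}(a,\sigma_2)$.

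\textbf{Where the real content lies.} The one place needing care is that Proposition \ref{estimate of Phi radially} really controls all radial derivatives $\phi^{(l)}$, not just $\phi$ itself. This holds because membership in $\mathcal{W}$ is defined through all derivatives, and the radial expansions in Lemma \ref{estimate of I} and Proposition \ref{expression of Ik} were tracked in $\mathcal{W}$ classes.

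One subtlety is that for $m\geq4$ the function $\Phi_{b,m}$ equals $\sum f_i$ only for large $r$, after the polynomial part has been subtracted. This causes no problem, since only large $r$ matters. The small cases $m=2,3$ follow directly from Lemma \ref{asymptotic of v}. Once these points are settled, the multi-index computation in Steps 1 and 2 is routine bookkeeping, so I expect no genuine obstacle.
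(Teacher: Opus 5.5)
Your proposal is correct and follows essentially the same route as the paper. The paper likewise writes $D^\beta\Phi_{b,m}(x)=\sum_{l=1}^{|\beta|}f_{\beta,l}(x)\Phi_{b,m}^{(l)}(r)$ with $|f_{\beta,l}(x)|\le C|x|^{l-|\beta|}$ (asserted by induction) and combines this with $\Phi_{b,m}\in\mathcal{W}(N-m+2,\sigma)$ from Proposition \ref{estimate of Phi radially}; your homogeneity argument supplies the details of that induction, which the paper leaves to the reader.
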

\begin{proof}
When $|\beta|=0$, the estimate of $\Phi_{b,m}$ follows directly from  Proposition  \ref{estimate of Phi radially}. When $|\beta|\geq1$, for radially symmetric function $\Phi_{b,m}$, it is easy to prove by induction that
\begin{align}
    D^\beta\Phi_{b,m}(x)=\sum_{l=1}^{|\beta|}f_{\beta,l}(x)\Phi_{b,m}^{(l)}(r).
    \label{derivative of radial function}
\end{align}
In addition, $f_{\beta,l}$ satisfies
\begin{equation*}
    |f_{\beta,l}(x)|\leq C(\beta,\,l)|x|^{l-|\beta|}.
\end{equation*}
The proposition is proved by the fact  that $\Phi_{b,m}(r)\in\mathcal{W}(N-m+2,\,\sigma)$ for some $\sigma\in\NN_+\cup\{0\}$.
\end{proof}

Next, we consider the truncation of $\Phi_{b,m}$. Let $\eta\in C^\oo_0\left(\RR;\,[0,\,1]\right)$ be a cut-off function such that $\eta(x)=1$ for $|x|\leq1$ and $\eta(x)=0$ for $|x|\geq2$. Define 
\begin{equation}
    \eta_L(x)=\eta(|x|/L)\qquad \text{for }L>0,
    \label{def of eta L}
\end{equation}
then we have
\begin{equation}
\label{estimate of eta}
    \lvert D^\beta \eta_L(x)\rvert\leq {C}{L^{-|\beta|}},
\end{equation}
for some constant $C$ which is independent of $L$.
\begin{lemma}
\label{cut off of Phi}
Let $N\geq2$, $m\geq2$ and $b>0$. For $L>0$, we denote $u_L(x)=\eta_{L}(x)\Phi_{b,m}(x)$, where $\Phi_{b,m}$ is given by \eqref{def of extremal} and $\eta_L$ is given by \eqref{def of eta L}. Define $j=\lfloor m/2\rfloor$ and $v_L=\Delta^{j-1}u_L$. If $m$ is even, then
\begin{equation*}
    \lim_{L\to+\oo}\int_{\RR^N}\lvert\mathcal{E}_b\left( v_L\right)\rvert^2\md x=0.
\end{equation*}
If $m$ is odd, then
\begin{equation*}
    \lim_{L\to+\oo}\int_{\RR^N}\lvert\mathcal{O}_b \left(v_L\right)\rvert^2\md x=0.
\end{equation*}
\end{lemma}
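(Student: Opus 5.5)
The plan is to use that $\mathcal{E}_b(v)=0$ (resp.\ $\mathcal{O}_b(v)=0$) holds exactly for $v=\Delta^{j-1}\Phi_{b,m}$, so on $\{|x|\le L\}$ we have $\mathcal{E}_b(v_L)=0$, since $u_L=\Phi_{b,m}$ there and hence $v_L=v_e$ by \eqref{relationship between Phi and v}. The same holds for $\mathcal{O}_b(v_L)$ when $m$ is odd, with $v_L$ then agreeing with $v_o$. Also $u_L$ is supported in $\{|x|\le 2L\}$, so the whole integral lives on the annulus $A_L=\{L\le |x|\le 2L\}$. There we expand $v_L=\Delta^{j-1}(\eta_L\Phi_{b,m})$ by the Leibniz rule as a finite sum $\sum_{\beta,\gamma} c_{\beta\gamma}D^{\beta}\eta_L\, D^{\gamma}\Phi_{b,m}$ with $|\beta|+|\gamma|=2j-2$. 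Then $\mathcal{E}_b(v_L)=\Delta v_L+b\,x\cdot\nabla v_L+bc_e v_L$, and $\mathcal{O}_b(v_L)$ similarly, becomes a finite sum of terms of the form $x^{\alpha}D^{\beta}\eta_L D^{\gamma}\Phi_{b,m}$ with $|\alpha|\in\{0,1\}$ and $|\beta|+|\gamma|\le m-1+|\alpha|$.

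Each such term is estimated on $A_L$ using \eqref{estimate of eta} and Lemma \ref{estimate of Phi}. Since $r\sim L$ on $A_L$, a term with total derivative count $k=|\beta|+|\gamma|$ and weight $|x|^{|\alpha|}$ is $O\bigl(L^{|\alpha|-k}\,L^{m-2-N}\ln^\sigma L\bigr)$. The worst terms are the ones with the fewest derivatives. The order of $v_L$ is $m-2$ for even $m$ ($2j-2$), and the operator adds either zero derivatives with no weight ($bc_e v_L$) or one derivative with weight $x$. Hence every term is bounded by $C L^{-(m-2)}L^{m-2-N}\ln^\sigma L=CL^{-N}\ln^\sigma L$.

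For odd $m$, $\mathcal{O}_b(v_L)$ involves $\nabla v_L$ (order $2j-1=m-2$), $x\Delta v_L$ (order $m-1$ with weight $x$), and $\nabla\Delta v_L$. The same bound $CL^{-N}\ln^\sigma L$ results. Squaring and integrating over $A_L$, whose volume is $\sim L^N$, gives
\begin{equation*}
\int_{\RR^N}|\mathcal{E}_b(v_L)|^2\md x\le C L^{N}\cdot L^{-2N}\ln^{2\sigma}L=CL^{-N}\ln^{2\sigma}L\to0 .
\end{equation*}

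The main obstacle is that this crude counting must really be sharp enough: the bound uses Lemma \ref{estimate of Phi} uniformly for all multi-indices $D^\gamma\Phi_{b,m}$ with $|\gamma|\le m$, including $|\gamma|=0$. The growth $r^{m-2-N}$ of $\Phi_{b,m}$ itself is exactly compensated by the $m-2$ derivatives forced onto $\eta_L$ or $\Phi_{b,m}$. This works because every term in $\mathcal{E}_b(v_L)$, or in $\mathcal{O}_b(v_L)$, carries at least $m-2$ derivatives in total, counting the weight $x$ as $-1$. I would verify this count carefully for both parities, noting that the $x\cdot\nabla$ term has net order $m-2$ and not less. For safety I would also check that $N\ge2$ makes $L^{-N}\ln^{2\sigma}L\to0$, which is immediate.
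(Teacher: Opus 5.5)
Your proof is correct and follows essentially the paper's route: reduce to the annulus $L\le|x|\le 2L$, expand by the Leibniz rule, and bound each term there using \eqref{estimate of eta} and Lemma \ref{estimate of Phi}. The only difference is that the paper splits off $\eta_L\Delta^{j-1}\Phi_{b,m}$ and handles it with $\mathcal{E}_b(v_e)=0$ (resp.\ $\mathcal{O}_b(v_o)=0$) and the sharper decay of Lemma \ref{asymptotic of v}, whereas you use the equation only to kill the inner ball and apply the uniform count to every term, which suffices; your stated upper bound $|\beta|+|\gamma|\le m-1+|\alpha|$ is a harmless slip, since it fails for $\Delta v_L$ and $\nabla\Delta v_L$ (where the count is $m$), but only the lower bound $|\beta|+|\gamma|-|\alpha|\ge m-2$ enters your estimate.
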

\begin{proof}
A direct computation shows that
\begin{align}
    v_L=\begin{cases}
    u_L,&\text{if }j=1,\\
    \eta_L\Delta^{j-1}\Phi_{b,m}+w_L,\quad&\text{if }j>1,
    \end{cases}
        \label{expression of vL}
\end{align}
where $w_L$ is a linear combination of products in the form $D^{\beta_1}\eta_L\cdot D^{\beta_2}\Phi_{b,m}$ for $|\beta_1|+|\beta_2|=2j-2$ and $|\beta_1|\geq1$.
\\
\textbf{Case 1: }$m$ is even. In this case,  we have $\Delta^{j-1}\Phi_{b,m}(x)=v_e(x,\,b)$. By Proposition \ref{expression of v} and Lemma \ref{asymptotic of v}, we obtain
\begin{align*}
    \mathcal{E}_b\left(\eta_L v_e\right)&=\eta_L\mathcal{E}_b\left(v_e\right)+v_e\Delta\eta_L+2\nabla v_e\nabla\eta_L+bv_e\left(x\cdot\nabla\eta_L\right)\\
    &=v_e\Delta\eta_L+2\nabla v_e\nabla\eta_L+bv_e\left(x\cdot\nabla\eta_L\right)\\
    &=O\left(|x|^{-c_e}\right),\qquad\text{as }|x|\to+\oo.
\end{align*}
It follows that
\begin{align}
    \int_{\RR^N}\left|\mathcal{E}_b\left(\eta_L v_e\right)\right|^2\md x&=\int_{\left\{x\in\RR^N:\,L\leq|x|\leq2 L\right\}}\left|\mathcal{E}_b\left(\eta_L v_e\right)\right|^2\md x\nonumber\\
    &\leq C L^{-2c_e}\int_L^{2L}t^{N-1}\md t\nonumber\\
    &=C L^{N-2c_e}\nonumber\\
    &\to 0,\qquad\text{as }L\to+\oo.
    \label{cut-off for even case 1}
\end{align}
Here we use the fact that $c_e>N/2$. Moreover, when $j>1$, we need to consider the term $w_L$. Since $w_L$ is a linear combination of products of partial derivatives of $\eta_L$ and $\Phi_{b,m}$, from \eqref{estimate of eta} and Lemma \ref{estimate of Phi} we obtain the following estimate:
\begin{align}
    \left|w_L\right|\leq CL^{-N}\ln^{\sigma}(L),\qquad\left|\nabla w_L\right|\leq CL^{-N-1}\ln^{\sigma}(L),\qquad\left|\Delta w_L\right|\leq CL^{-N-2}\ln^{\sigma}(L),
    \label{estimate of wL}
\end{align}
for sufficiently large $L>0$, where $C>0$ and $\sigma\in\NN_+\cup\{0\}$ are constants independent of $L$. Therefore, we have
\begin{align}
        \int_{\RR^N}\left|\mathcal{E}_b\left(w_L\right)\right|^2\md x&=\int_{\left\{x\in\RR^N:\,L\leq|x|\leq2 L\right\}}\left|\mathcal{E}_b\left(w_L\right)\right|^2\md x\nonumber\\
    &\leq C L^{-2N}\ln^{2\sigma}(L)\int_L^{2L}t^{N-1}\md t\nonumber\\
    &=C L^{-N}\ln^{2\sigma}(L)\nonumber\\
    &\to 0,\qquad\text{as }L\to+\oo.
    \label{cut-off for even case 2}
\end{align}
Combining \eqref{cut-off for even case 1} and \eqref{cut-off for even case 2}, we conclude that
\begin{equation*}
    \lim_{L\to+\oo}\int_{\RR^N}\lvert\mathcal{E}_b\left( v_L\right)\rvert^2\md x=0.
\end{equation*}
\\
\textbf{Case 2:} $m$ is odd. In this case, we have $\Delta^{j-1}\Phi_{b,m}(x)=v_o(x,\,b)$.  Then we deduce that
\begin{align*}
    \mathcal{O}_b\left(\eta_Lv_o\right)&=\eta_L\mathcal{O}_b\left(v_o\right)+\Delta v_o\nabla\eta_L+2\nabla\left(\nabla\eta_L\cdot\nabla v_o\right)+\nabla\left(v_o\Delta\eta_L\right)\\
    &\qquad\qquad+b\left(2\nabla v_o\cdot\nabla\eta_L+v_o\Delta\eta_L\right)x+bc v_o\nabla\eta_L.
\end{align*}
From Proposition \ref{expression of v} and Lemma \ref{asymptotic of v}, we get $\mathcal{O}_b\left(v_o\right)=0$ and 
\begin{align*}
    \left|\mathcal{O}_b\left(\eta_Lv_o\right)\right|=O\left(|x|^{1-N-c_o}\right),\qquad\text{as }|x|\to+\oo.
\end{align*}
Notice that $N+2c_o>2$, We deduce
\begin{align}
        \int_{\RR^N}\left|\mathcal{O}_b\left(\eta_L v_o\right)\right|^2\md x&=\int_{\left\{x\in\RR^N:\,L\leq|x|\leq2 L\right\}}\left|\mathcal{O}_b\left(\eta_L v_o\right)\right|^2\md x\nonumber\\
    &\leq C L^{2-2N-2c_o}\int_L^{2L}t^{N-1}\md t\nonumber\\
    &=C L^{2-N-2c_o}\nonumber\\
    &\to 0,\qquad\text{as }L\to+\oo.
    \label{cut-off for odd case 1}
\end{align}
As for the term $w_L$, which needs to be considered when $j>1$, we have the estimate \eqref{estimate of wL} together with the following estimate
\begin{equation*}
    \left|\nabla\Delta w_L\right|\leq CL^{-N-3}\ln^{\sigma}(L),
\end{equation*}
for large $L>0$. Then we obtain that
\begin{align}
    \int_{\RR^N}\left|\mathcal{O}_b\left(w_L\right)\right|^2\md x&=\int_{\left\{x\in\RR^N:\,L\leq|x|\leq2 L\right\}}\left|\mathcal{O}_b\left(w_L\right)\right|^2\md x\nonumber\\
    &\leq C L^{-2N-2}\ln^{2\sigma}(L)\int_L^{2L}t^{N-1}\md t\nonumber\\
    &=C L^{-N-2}\ln^{2\sigma}(L)\nonumber\\
    &\to 0,\qquad\text{as }L\to+\oo.
    \label{cut-off for odd case 2}
\end{align}
From \eqref{cut-off for odd case 1} and \eqref{cut-off for odd case 2} we get that 
\begin{equation*}
    \lim_{L\to+\oo}\int_{\RR^N}\lvert\mathcal{O}_b\left( v_L\right)\rvert^2\md x=0.
\end{equation*}
\end{proof}
We now establish that the constant $\mu_m(N)$ in \eqref{eq m order HUP} is optimal. For $u
\in C^\oo_0\left(\RR^N\right)\backslash\{0\}$, define the quotient $\mathcal{Q}_m$ as 
\begin{equation}
\label{def of Q}
    \mathcal{Q}_m(u)\coloneqq\dfrac{\sqrt{\int_{\RR^N}\lvert P_m u\rvert^2\md x\int_{\RR^N}\lvert x\rvert^2\lvert \nabla^{m-1} u\rvert^2\md x}}{\int_{\RR^N}\lvert P_{m-1}u\rvert^2\md x}.
\end{equation}
\vspace{0.5cm}
\begin{proposition}
\label{sharpness}
Let $N\geq2$, $m\geq2$ and $b>0$. For $u_L\in C^\oo_0\left(\RR^N\right)$ defined in Lemma \ref{cut off of Phi} and $\mathcal{Q}_m$ defined in \eqref{def of Q}, there holds
\begin{equation*}
    \lim_{L\to+\oo}\mathcal{Q}_m(u_L)=\sqrt{\mu_m(N)}.
\end{equation*}
\end{proposition}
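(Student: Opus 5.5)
We combine the identities \eqref{linearized even} and \eqref{linearized odd} with Lemma \ref{cut off of Phi}. Take the case $m=2j$; the odd case is identical with $\mathcal{O}_b$, $c_o$ in place of $\mathcal{E}_b$, $c_e$. Set $v_L=\Delta^{j-1}u_L$ and
\begin{align*}
A_L=\int_{\RR^N}|P_mu_L|^2\md x,\qquad B_L=\int_{\RR^N}|x|^2|\nabla^{m-1}u_L|^2\md x,\qquad D_L=\int_{\RR^N}|P_{m-1}u_L|^2\md x.
\end{align*}
Since $u_L\in C^\oo_0(\RR^N)$, the computation in \eqref{linearized even} with $c=c_e$ is an exact identity. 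Here the middle inequality in \eqref{linearized even} is an equality, because $v$ is a function and not a vector, so $|x|^2|\nabla v|^2$ is exact. Hence
\begin{align*}
\int_{\RR^N}|\mathcal{E}_b(v_L)|^2\md x=A_L+b^2B_L-b(2c_e-N+2)D_L .
\end{align*}
By Lemma \ref{cut off of Phi}, the left side tends to $0$ as $L\to\oo$. Write $\kappa=\frac{2c_e-N+2}{2}=\sqrt{\mu_m(N)}$, so that $A_L+b^2B_L-2\kappa bD_L\to0$.

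Next we need $D_L$ to converge to a finite positive limit $D_\oo$. Positivity holds because $P_{m-1}\Phi_{b,m}\not\equiv0$: it equals $\nabla v_e$ (respectively $\Delta v_o$), which is nonzero since $v_e$ (respectively $v_o$) is a nonconstant function decaying at infinity. For convergence, expand $P_{m-1}u_L=\eta_LP_{m-1}\Phi_{b,m}+(\text{terms with } D^{\beta_1}\eta_L,\ |\beta_1|\ge1)$. The first term converges in $L^2$ to $P_{m-1}\Phi_{b,m}$. By Lemma \ref{asymptotic of v}, $\nabla v_e=O(r^{-c_e-1})$ and $\Delta v_o=O(r^{-c_o-N})$, and both are in $L^2$ since $c_e\ge N$ and $c_o>0$. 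The error terms are bounded on the annulus $L\le|x|\le2L$ by Lemma \ref{estimate of Phi} and \eqref{estimate of eta} as $CL^{m-2-N-(m-1)}\ln^\sigma L=CL^{-N-1}\ln^\sigma L$. Their squared $L^2$ norm is therefore $O(L^{-N-2}\ln^{2\sigma}L)\to0$.

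With $D_L\to D_\oo>0$, the two terms $A_L$ and $b^2B_L$ are nonnegative and sum to something bounded, so both stay bounded. By AM--GM, $2b\sqrt{A_LB_L}\le A_L+b^2B_L=2\kappa bD_L+o(1)$, which gives $\limsup_L\mathcal{Q}_m(u_L)\le\kappa$. The reverse inequality $\mathcal{Q}_m(u_L)\ge\kappa$ is Theorem \ref{thm m order HUP}, so $\lim_L\mathcal{Q}_m(u_L)=\sqrt{\mu_m(N)}$.

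The main obstacle is the bookkeeping showing that $P_{m-1}u_L\to P_{m-1}\Phi_{b,m}$ in $L^2$. One must check that the growth of $\Phi_{b,m}$ (allowed to grow like $r^{m-2-N}\ln^\sigma r$) is exactly compensated by the $m-1$ derivatives, using the decay exponents of Lemma \ref{estimate of Phi} together with the improved decay $c_e\ge N$ and $c_o+N-2\ge N-1$ of $v_e,v_o$ for the leading term.
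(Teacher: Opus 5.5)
Your argument follows the paper's proof step for step. You use the expanded-square identity with $c=c_e$ (resp.\ $c_o$), Lemma~\ref{cut off of Phi} to kill $\int|\mathcal{E}_b(v_L)|^2$ (resp.\ $\int|\mathcal{O}_b(v_L)|^2$), AM--GM for the upper bound, the $L^2$ convergence $P_{m-1}u_L\to P_{m-1}\Phi_{b,m}$ via the $\eta_L$-expansion and Lemma~\ref{estimate of Phi}, and Theorem~\ref{thm m order HUP} for the lower bound. Your exponent bookkeeping ($O(L^{-N-2}\ln^{2\sigma}L)$ for the error terms, square-integrability of $\nabla v_e$ and $\Delta v_o$) and your positivity argument for $D_\infty$ are correct.

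One justification is wrong, and it sits at a step you genuinely need. The inequality in \eqref{linearized even} comes from replacing $\int(x\cdot\nabla v)^2\md x$ in \eqref{expansion of E} by $\int|x|^2|\nabla v|^2\md x$. That is the pointwise Cauchy--Schwarz bound $(x\cdot\nabla v)^2\le|x|^2|\nabla v|^2$. For a general scalar $v$ this is strict wherever $\nabla v$ is not parallel to $x$, so the fact that $v$ is a scalar function gives no equality. Without equality you only get $\int|\mathcal{E}_b(v_L)|^2\md x\le A_L+b^2B_L-2\kappa bD_L$, which is the wrong direction for bounding $\mathcal{Q}_m(u_L)$ from above.

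The correct reason is radial symmetry, which is exactly what the paper invokes. Since $\eta_L$ and $\Phi_{b,m}$ are radial, so is $v_L=\Delta^{j-1}u_L$. Hence $\nabla v_L=v_L'(r)\,x/r$ for $x\neq0$, and $(x\cdot\nabla v_L)^2=|x|^2|\nabla v_L|^2$ pointwise. In the odd case the issue does not arise: $|(b\Delta v)x|^2=b^2|\Delta v|^2|x|^2$ holds for every $v$, so \eqref{linearized odd} is an identity. That is probably what your scalar-versus-vector remark was getting at, but it does not transfer to the even case. With this correction your proof is complete.
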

\begin{proof}
We denote $j=\lfloor m/2\rfloor$ and $v_L=\Delta^{j-1} u_L$ for $L>0$, where $v_L$ is given by \eqref{expression of vL}.
\\
\textbf{Case 1:} $m$ is even. Setting $c=c_e$
 in \eqref{expansion of E} and noting that  $v_L$ is radially symmetric, by Lemma \ref{integral identity 3}, we get
\begin{align*}
    &\int_{\RR^N}\lvert\mathcal{E}_{b}\left(v_L\right)\rvert^2\md x\\
   =&\int_{\RR^N}\lvert\Delta v_L\rvert^2\md x+b^2\int_{\RR^N}\lvert x\rvert^2\lvert\nabla v_L\rvert^2\md x+b\left(N-2c_e-2\right)\int_{\RR^N}\lvert \nabla v_L\rvert^2\md x\\
   &\hspace{2cm}+b^2\left(c_e^2-c_eN\right)\int_{\RR^N}v_L^2\md x\nonumber\\
    =&\int_{\RR^N}|P_m u_L|^2\md x+b^2\int_{\RR^N}|x|^2\left|P_{m-1}u_L\right|^2\md x+b\left(N-2c_e-2\right)\int_{\RR^N}\left|P_{m-1}u_L\right|^2\md x\nonumber\\
    &\hspace{2cm}+b^2\left(c_e^2-c_eN\right)\int_{\RR^N}\left|P_{m-2} u_L\right|^2\md x\nonumber\\
    =&\int_{\RR^N}|P_m u_L|^2\md x+b^2\int_{\RR^N}|x|^2\left|\nabla^{m-1}u_L\right|^2\md x+b\left(N-2c_e-2\right)\int_{\RR^N}\left|P_{m-1}u_L\right|^2\md x\nonumber\\
    &\hspace{2cm}+b^2\left(c_e^2-c_eN-(m-2)(N-1)\right)\int_{\RR^N}\left|P_{m-2} u_L\right|^2\md x\\
    =&\int_{\RR^N}|P_m u_L|^2\md x+b^2\int_{\RR^N}|x|^2\left|\nabla^{m-1}u_L\right|^2\md x+b\left(N-2c_e-2\right)\int_{\RR^N}\left|P_{m-1}u_L\right|^2\md x.
\end{align*}
\\
It follows that
\begin{align}
    &2b\sqrt{\int_{\RR^N}\left\lvert P_m u_L\right\rvert^2\md x\int_{\RR^N}\lvert x\rvert^2\left\lvert \nabla^{m-1} u_L\right\rvert^2\md x}\nonumber\\
    \leq& {\int_{\RR^N}\left\lvert P_m u_L\right\rvert^2\md x+b^2\int_{\RR^N}\lvert x\rvert^2\left\lvert \nabla^{m-1} u_L\right\rvert^2\md x}\nonumber\\
    =&b\left(2c_e+2-N\right)\int_{\RR^N}\left\lvert P_{m-1} u_L\right\rvert^2\md x+\int_{\RR^N}\left\lvert\mathcal{E}_b\left(v_L\right)\right\rvert^2\md x.
    \label{proof of optimal even}
\end{align}
Next we verify that 
\begin{align*}
\lim_{L\to+\oo}\int_{\RR^N}\left|P_{m-1}u_L\right|^2\md x=\int_{\RR^N}\left|P_{m-1}\Phi_{b,m}\right|^2\md x>0.
\end{align*}
\\
Here $P_{m-1}u_L=\nabla\Delta^{j-1}u_L=\nabla v_L$. By \eqref{expression of vL},  for any $j\geq1$, we have
\begin{align*}
    P_{m-1}u_L=\eta_LP_{m-1}\Phi_{b,m}+q_L,
\end{align*}
\\
where $q_L$ is a linear combination of products in the form $D^{\beta_1}\eta_L\cdot D^{\beta_2}\Phi_{b,m}$ for $|\beta_1|+|\beta_2|=2j-1$ and $|\beta_1|\geq1$. By \eqref{estimate of eta} and Lemma \ref{estimate of Phi}, we obtain $q_L=O\left(|x|^{-N-1}\ln^\sigma(|x|)\right)$ as $|x|\to+\oo$. Thus, for sufficiently large $L>0$, we have
\begin{align*}
    \int_{\RR^N}\left\lvert q_L\right\rvert^2\md x=\int_{\{x\in\RR^N:\,L\leq|x|\leq 2L\}}\left\lvert q_L\right\rvert^2\md x
    &\leq CL^{-2N-2}\ln^{2\sigma}(L)\int_L^{2L}t^{N-1}  \md t\\
    &\to0, \text{\qquad as }L\to+\oo.
\end{align*}
\\
Moreover, for $P_{m-1}\Phi_{b,m}=\nabla v_e$, there holds:
\begin{align*}
    \left\lvert \int_{\RR^N}\left\lvert \eta_LP_{m-1} \Phi_{b,m}\right\rvert^2\md x-\int_{\RR^N}\left\lvert P_{m-1}\Phi_{b,m}\right\rvert^2\md x\right\rvert&=\int_{\{x\in\RR^N:\,\lvert x\rvert\geq L \}}\left\lvert P_{m-1}\Phi_{b,m}\rvert^2\lvert\eta_L^2-1\right\rvert^2\md x\\
    &\leq C\int_L^{+\oo}r^{-2\left(1+c_e\right)}\cdot r^{N-1}\md r\\
    &\to0, \text{\qquad as }L\to+\oo.
\end{align*}
It follows that
\begin{align*}
   \lim_{L\to+\oo} \left\lvert \int_{\RR^N}\left\lvert P_{m-1}u_L\right\rvert^2\md x-\int_{\RR^N}\left\lvert P_{m-1}\Phi_{b,m}\right\rvert^2\md x\right\rvert=0.
\end{align*}
Therefore, together with Theorem  \ref{thm m order HUP} and \eqref{proof of optimal even}, we have
\begin{align*}
   \dfrac{2c_e-N+2}{2}\leq \mathcal{Q}_m\left(u_L\right)&\leq \dfrac{2c_e-N+2}{2}+\dfrac{\int_{\RR^N}\left\lvert\mathcal{E}_b\left(v_L\right)\right\rvert^2\md x}{2b \int_{\RR^N}\left\lvert P_{m-1} u_L\right\rvert^2\md x}.
\end{align*}
Now, letting $L\to+\oo$, by Lemma \ref{cut off of Phi}, there holds:
\begin{align*}
    \lim_{L\to+\oo}\mathcal{Q}_m\left(u_L\right)=\frac{2c_e-N+2}{2}=\sqrt{\mu_m(N)}.
\end{align*}
\\
\textbf{Case 2:} $m$ is odd. Taking $c=c_o$ in \eqref{linearized odd}, we obtain
\begin{align*}
    \int_{\RR^N}\left|\mathcal{O}_b\left(v_L\right)\right|^2\md x&=\int_{\RR^N}\left\lvert P_m u_L\right\rvert^2\md x+b^2\int_{\RR^N}\left\lvert \nabla^{m-1}u_L\right\rvert^2\lvert x\rvert^2\md x\nonumber\\
    &\hspace{3.2cm}-b\left(N+2c_o\right)\int_{\RR^N}\left\lvert P_{m-1}u_L\right\rvert^2\md x
\end{align*}
Consequently, we have
\begin{align}
    &2b\sqrt{\int_{\RR^N}\left\lvert P_m u_L\right\rvert^2\md x\int_{\RR^N}\lvert x\rvert^2\left\lvert \nabla^{m-1} u_L\right\rvert^2\md x}\nonumber\\
    \leq& {\int_{\RR^N}\left\lvert P_m u_L\right\rvert^2\md x+b^2\int_{\RR^N}\lvert x\rvert^2\left\lvert \nabla^{m-1} u_L\right\rvert^2\md x}\nonumber\\
    =&b\left(2c_o+N\right)\int_{\RR^N}\left\lvert P_{m-1} u_L\right\rvert^2\md x+\int_{\RR^N}\left\lvert\mathcal{O}_b\left(v_L\right)\right\rvert^2\md x.
    \label{proof of optimal odd}
\end{align}
Next we prove that $\int_{\RR^N}\left|P_{m-1}u_L\right|^2\md x$ converges to $\int_{\RR^N}\left|P_{m-1}\Phi_{b,m}\right|^2\md x$ as $L\to+\oo$. By \eqref{expression of vL}, there holds
\begin{equation*}
    P_{m-1}u_L=\Delta v_L=\eta_L P_{m-1}\Phi_{b,m}+\tilde{q}_L,
\end{equation*}
where $\tilde{q}_L$ is a linear combination of products in the form $D^{\beta_1}\eta_L\cdot D^{\beta_2}\Phi_{b,m}$ for $|\beta_1|+|\beta_2|=2j$ and $|\beta_1|\geq1$. Similarly, from  \eqref{estimate of eta} and Lemma \ref{estimate of Phi}, for all $j\geq1$,  we have 
\begin{align*}
\tilde{q}_L=O\left(|x|^{-N-1}\ln^\sigma(|x|)\right).
\end{align*}
This implies that
\begin{align*}
      \int_{\RR^N}\left\lvert \tilde{q}_L\right\rvert^2\md x=\int_{\{x\in\RR^N:\,L\leq|x|\leq 2L\}}\left\lvert \tilde{q}_L\right\rvert^2\md x
    &\leq CL^{-2N-2}\ln^{2\sigma}(L)\int_L^{2L}t^{N-1}  \md t\\
    &\to0, \text{\qquad as }L\to+\oo.
\end{align*}
In addition, We also have the following estimate for $P_{m-1}\Phi_{b,m}=\Delta v_o$:
\begin{align*}
    \left\lvert \int_{\RR^N}\left\lvert \eta_LP_{m-1} \Phi_{b,m}\right\rvert^2\md x-\int_{\RR^N}\left\lvert P_{m-1}\Phi_{b,m}\right\rvert^2\md x\right\rvert&=\int_{\{x\in\RR^N:\,\lvert x\rvert\geq L \}}\left\lvert P_{m-1}\Phi_{b,m}\rvert^2\lvert\eta_L^2-1\right\rvert^2\md x\\
    &\leq C\int_L^{+\oo}r^{-2\left(N+c_o\right)}\cdot r^{N-1}\md r\\
    &\to0, \text{\qquad as }L\to+\oo.
\end{align*}
It follows that
\begin{align*}
   \lim_{L\to+\oo} \left\lvert \int_{\RR^N}\left\lvert P_{m-1}u_L\right\rvert^2\md x-\int_{\RR^N}\left\lvert P_{m-1}\Phi_{b,m}\right\rvert^2\md x\right\rvert=0.
\end{align*}
Hence, by Theorem  \ref{thm m order HUP} and \eqref{proof of optimal odd}, we have
\begin{align*}
   \dfrac{2c_o+N}{2}\leq \mathcal{Q}_m\left(u_L\right)&\leq \dfrac{2c_o+N}{2}+\dfrac{\int_{\RR^N}\left\lvert\mathcal{O}_b\left(v_L\right)\right\rvert^2\md x}{2b \int_{\RR^N}\left\lvert P_{m-1} u_L\right\rvert^2\md x}.
\end{align*}
Finally, by Lemma \ref{cut off of Phi}, there holds:
\begin{align*}
    \lim_{L\to+\oo}\mathcal{Q}_m\left(u_L\right)=\frac{2c_o+N}{2}=\sqrt{\mu_m(N)}.
\end{align*}
\end{proof}

Similarly, when $N=1$, we have the following result.
\begin{proposition}
\label{sharpness when N=1}
Let $N=1$, $m\geq2$ and $b>0$. Denote $u_L=\eta_L\Phi_{b,m}\in C^\oo_0\left(\RR\right)$ for $L>0$, where  $\Phi_{b,m}$ is given by \eqref{def of Phi when N=1} and $\eta_L$ is given by \eqref{def of eta L}. For $\mathcal{Q}_m$ defined in \eqref{def of Q}, there holds
\begin{equation*}
    \lim_{L\to+\oo}\mathcal{Q}_m(u_L)=\mathcal{Q}_m\left(\Phi_{b,m}\right)=\sqrt{\mu_m(1)}=\frac{3}{2}.
\end{equation*}
\end{proposition}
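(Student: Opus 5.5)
The plan is to follow the proof of Proposition~\ref{sharpness}, with the simplifications that $N=1$ allows. For $N=1$ we have $\nabla^{m-1}u=u^{(m-1)}=P_{m-1}u$ and $P_mu=u^{(m)}$ up to sign, and the coefficient $(N-1)$ in Lemma~\ref{integral identity 3} vanishes. Both cases of Theorem~\ref{thm m order HUP} then reduce to the one-dimensional identity
\begin{equation*}
\int_{\RR}\left|w''+2bxw'+2bw\right|^2\md x=\int_{\RR}|w''|^2\md x+4b^2\int_{\RR}x^2|w'|^2\md x-6b\int_{\RR}|w'|^2\md x,
\end{equation*}
with $w=u^{(m-2)}$. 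This is \eqref{expansion of E} with $N=1$, $b$ replaced by $2b$ and $c_e=1$, which is the positive root of $c^2-c=0$. Note that $w''+2bxw'+2bw=(w'+2bxw)'$ vanishes for $w=\mathrm{e}^{-bx^2}$. Since $\mu_m(1)=(3/2)^2$, the constant is $\sqrt{\mu_m(1)}=3/2$.

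\textbf{Step 1 (value at $\Phi_{b,m}$).} By \eqref{def of Phi when N=1}, $\Phi_{b,m}^{(m-2)}=\mathrm{e}^{-bx^2}=:g$, so $P_{m-1}\Phi_{b,m}=g'$ and $P_m\Phi_{b,m}=g''$. All three integrals $\int|g''|^2$, $\int x^2|g'|^2$ and $\int|g'|^2$ are finite Gaussian integrals. They can be computed directly, or obtained from the identity above applied to $g$ (with a justification for Gaussians, since $g\notin C^\oo_0$) together with the scaling $\int|g''|^2=4b^2\int x^2|g'|^2$, which follows by integrating by parts against $g'=-2bxg$. Either way one finds $\mathcal{Q}_m(\Phi_{b,m})=3/2$. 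For instance, $\int|g'|^2=4b^2\int x^2 g^2$ and $\int x^2|g'|^2=4b^2\int x^4g^2$ give the ratio explicitly.

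\textbf{Step 2 (convergence along $u_L$).} Write $u_L^{(k)}=\eta_L\Phi_{b,m}^{(k)}+\sum_{i\ge1}\binom{k}{i}\eta_L^{(i)}\Phi_{b,m}^{(k-i)}$. The cross terms are supported in $L\le|x|\le2L$ and satisfy $|\eta_L^{(i)}|\le CL^{-i}$.

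This step is the main obstacle. The bound $\Phi_{b,m}^{(k)}=O(|x|^{m-3-k})$ from Lemma~\ref{estimate of Lk} is too crude, since for $k=m-1$ a term with $i=1$ gives only $O(L^{-1}\cdot L^{0})$. I will therefore exploit the structure of $\Phi_{b,m}$ instead. By \eqref{expansion of Lk}, $\mathcal{L}_k(x)$ equals a polynomial $p_k$ of degree $\le k-1$, with one polynomial for $x\to+\oo$ and another for $x\to-\oo$, plus an error bounded by $C|x|^{k}\mathrm{e}^{-bx^2}$ whose derivatives are also Gaussian-small.

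Hence, for $k\le m-2$, $\Phi_{b,m}^{(k)}$ is a polynomial of degree $\le m-3-k$ up to a Gaussian error. A polynomial piece is killed by the total $m-1$ or $m$ derivatives only when the differentiation falls on it enough times; the remaining terms are therefore combinations of $\eta_L^{(i)}\cdot(\text{polynomial})$ in which, by a degree count, $i$ strictly exceeds the degree. It suffices to show each such term has $L^2$ norm $O(L^{-1/2})$.

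Indeed, a term $\eta_L^{(i)}p$ with $\deg p=d<i$ is bounded by $CL^{d-i}\le CL^{-1}$ on a set of measure $\le 2L$, so its $L^2$ norm squared is $O(L^{-1})$. The Gaussian errors decay superexponentially. Weighted by $x^2$ in $\int x^2|u_L^{(m-1)}|^2$, the same count gives norm squared $O(L^{2d-2i+3})$. This is too weak when $i=d+1$, so for this integral I will instead use $\int x^2|u_L^{(m-1)}|^2=\int x^2|\eta_L g'+\cdots|^2$ and track the exact cancellation.

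If that fails, the fallback is to choose the polynomial tails of $\Phi_{b,m}$ to vanish on one side and handle the other side with a symmetric construction. Alternatively, one can replace $\mathcal{L}_{m-2}$ by an antiderivative of $g$ normalized so that all iterated integrals of $g$ decay; this is possible only when the moments of $g$ vanish, which fails here, so in that case the polynomials must be handled directly.

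Once all three integrals of $u_L$ converge to those of $\Phi_{b,m}$, continuity of the quotient gives $\lim_{L\to+\oo}\mathcal{Q}_m(u_L)=\mathcal{Q}_m(\Phi_{b,m})=3/2$.
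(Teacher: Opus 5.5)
Your Step~1 is correct and matches the paper's explicit Gaussian computation: $\int|g''|^2=3b^2\sqrt{\pi/(2b)}$, $\int x^2|g'|^2=\tfrac34\sqrt{\pi/(2b)}$ and $\int|g'|^2=b\sqrt{\pi/(2b)}$, which give $3/2$.

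The gap is in Step~2. You never actually prove $\int_{\RR}x^2|u_L^{(m-1)}|^2\md x\to\int_{\RR}x^2|g'|^2\md x$. Instead you announce that you will ``track the exact cancellation'' and then list fallbacks. One of them, re-choosing the polynomial tails of $\Phi_{b,m}$, would change the functions $u_L=\eta_L\Phi_{b,m}$ that the statement fixes.

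The obstacle that led you there does not exist; it comes from two counting slips.
\begin{itemize}
\item For $k=m-1$ the $i=1$ term is $(m-1)\eta_L'\Phi_{b,m}^{(m-2)}=(m-1)\eta_L'\mathrm{e}^{-bx^2}$, which is superexponentially small. Even the crude bound $\Phi_{b,m}^{(m-2)}=O(|x|^{-1})$ gives $O(L^{-2})$, not $O(L^{-1}\cdot L^0)$.
\item For $i\ge2$ one has $\Phi_{b,m}^{(m-1-i)}=\mathcal{L}_{i-1}=O(|x|^{i-2})$ by Lemma~\ref{estimate of Lk}. So in $u_L^{(m-1)}$ the order $i$ exceeds the polynomial degree $d$ by at least $2$, not $1$ (and by at least $3$ in $u_L^{(m)}$).
\end{itemize}
Hence every cross term in $u_L^{(m-1)}$ is $O(L^{-2})$ on $\{L\le|x|\le2L\}$. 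Your own count then gives $O(L^{2d-2i+3})=O(L^{-1})$ for the weighted integral; the case $i=d+1$ you worried about never occurs.

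So the bound you dismissed as too crude is exactly what closes the argument, and it is what the paper uses. Combine $|\eta_L^{(l)}|\le CL^{-l}\le C'|x|^{-l}$ on the annulus with $|\Phi_{b,m}^{(j)}(x)|\le C|x|^{m-3-j}$ for $|x|\ge1$ and all $j\ge0$. This gives $|(u_L-\Phi_{b,m})^{(k)}(x)|\le C|x|^{m-3-k}$ with $C$ independent of $L$, and the difference vanishes for $|x|<L$. Therefore
\[
\int_{\RR}x^2\left|(u_L-\Phi_{b,m})^{(m-1)}\right|^2\md x\le C\int_{|x|\ge L}|x|^{-2}\md x\to0,
\]
and in the same way $\int|(u_L-\Phi_{b,m})^{(m)}|^2\le C\int_{|x|\ge L}|x|^{-6}\to0$ and $\int|(u_L-\Phi_{b,m})^{(m-1)}|^2\le C\int_{|x|\ge L}|x|^{-4}\to0$.

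With this estimate in place of your ``main obstacle'' paragraph, your proof is complete. The polynomial-plus-Gaussian decomposition of $\mathcal{L}_k$ and any cancellation argument become unnecessary, and so does the expanding-the-squares identity, since for $N=1$ the limit is computed directly.
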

\begin{proof}
In fact, for $k\in\NN_+$, Lemma \ref{estimate of Lk} implies that
\begin{align*}
    \left(u_L-\Phi_{b,m}\right)^{(k)}(x)=\sum_{l=0}^{k}C(k,\,l)\left(\eta_L-1\right)^{(l)}(x)\cdot\Phi_{b,m}^{(k-l)}(x)=O\left(|x|^{m-3-k}\right).
\end{align*}
Then, as $L\to+\oo$, we have
\begin{align*}
    \int_{\RR}\left|\left(u_L-\Phi_{b,m}\right)^{(m)}(x)\right|^2\md x&\leq C\int_{\{x\in\RR:\,|x|\geq L\}}|x|^{-6}\md x\to0,\\
    \int_{\RR}x^2\left|\left(u_L-\Phi_{b,m}\right)^{(m-1)}(x)\right|^2\md x&\leq C\int_{\{x\in\RR:\,|x|\geq L\}}|x|^{-2}\md x\to0,\\
    \int_{\RR}\left|\left(u_L-\Phi_{b,m}\right)^{(m-1)}(x)\right|^2\md x&\leq C\int_{\{x\in\RR:\,|x|\geq L\}}|x|^{-4}\md x\to0.
\end{align*}
Hence, using $\Phi_{b,m}^{(m-2)}(x)=\mathrm{e}^{-bx^2}$, we deduce that
\begin{align*}
    \lim_{L\to+\oo}\int_{\RR}\left|u_L^{(m)}(x)\right|^2\md x&=\int_{\RR}\left|\Phi_{b,m}^{(m)}(x)\right|^2\md x=3b^2\sqrt{\frac{\pi}{2b}},\\
    \lim_{L\to+\oo}\int_{\RR}x^2\left|u^{(m-1)}_L(x)\right|^2\md x&=\int_{\RR}x^2\left|\Phi_{b,m}^{(m-1)}(x)\right|^2\md x=\frac{3}{4}\sqrt{\frac{\pi}{2b}},\\
    \lim_{L\to+\oo}\int_{\RR}\left|u_L^{(m-1)}(x)\right|^2\md x&=\int_{\RR}\left|\Phi_{b,m}^{(m-1)}(x)\right|^2\md x=b\sqrt{\frac{\pi}{2b}}.
\end{align*}
It follows that
\begin{align*}
    \lim_{L\to+\oo}\mathcal{Q}_m(u_L)=\mathcal{Q}_m\left(\Phi_{b,m}\right)=\frac{3}{2}.
\end{align*}
\end{proof}

Now we can give the proof of Theorem \ref{extremal}.
\begin{proof}[Proof of Theorem \ref{extremal}]\hfill

The case $N=1$ follows directly from Theorem \ref{thm m order HUP} and Proposition \ref{sharpness when N=1}. Thus, we restrict ourselves to the case $N\geq2$ in what follows. In fact, by Theorem \ref{thm m order HUP} and Proposition \ref{sharpness}, we deduce that $\mu_m(N)$ in \eqref{eq m order HUP} is optimal. Therefore, we only need to prove that $\Phi_{b,m}$ given by \eqref{def of extremal} is an extremal function, that is, $\mathcal{Q}_m\left(\Phi_{b,m}\right)=\sqrt{\mu_m(N)}$. Notice that  \eqref{relationship between Phi and v} implies
\begin{align*}
    P_{m-2}\Phi_{b,m}(x)=\begin{cases}
    \mathrm{e}^{-\frac{br^2}{2}}{_1F_1}\left(\dfrac{N-\sqrt{N^2+4(m-2)(N-1)}}{4},\,\dfrac{N}{2},\,\dfrac{br^2}{2}\right),\,&\text{if $m$ is even,}\\
     x\mathrm{e}^{-\frac{br^2}{2}}{_1F_1}\left(\dfrac{2+N-\sqrt{N^2+4(m-2)(N-1)}}{4},\,\dfrac{N+2}{2},\,\dfrac{br^2}{2}\right),\,&\text{if $m$ is odd,}
    \end{cases}
\end{align*}
\text{for }$r=|x|$. Besides, by Lemma \ref{estimate of Phi}, as $|x|\to+\oo$, we deduce that
\begin{align*}
    \left|\nabla^{m-1}\Phi_{b,m} \right|=O\left(|x|^{-N-1}\ln^{\sigma}|x|\right),\qquad P_{m}\Phi_{b,m}=O\left(|x|^{-N-2}\ln^\sigma|x|\right).
\end{align*}
Then we find $\left\|\Phi_{b,m}\right\|_{\mathcal{H}^m}<+\oo$. Next, for $\|\cdot\|_{\mathcal{H}^m}$ given by \eqref{def of seminorm} and $u_L=\eta_L\Phi_{b,m}$ defined in Lemma \ref{cut off of Phi},  we will prove that 
\begin{align*}
    \lim_{L\to+\oo}\|u_L-\Phi_{b,m}\|_{\mathcal{H}^m}=0.
\end{align*}
Note that
\begin{align*}
    P_{m}\left(u_L-\Phi_{b,m}\right)=\left(\eta_L-1\right)P_m\Phi_{b,m}+\rho_L,
\end{align*}
where $\rho_L$ is a linear combination of products in the form $D^{\beta_1}\eta_L\cdot D^{\beta_2}\Phi_{b,m}$ for $|\beta_1|+|\beta_2|=m$ and $|\beta_1|\geq1$. From  \eqref{estimate of eta} and Lemma \ref{estimate of Phi}, we have
\begin{align*}
    \rho_L=O\left(|x|^{-N-2}\ln^{\sigma}|x|\right)\qquad\text{as }|x|\to+\oo.
\end{align*}
Hence, for large $L>0$, we get
\begin{align}
    \int_{\RR^N}\left|P_{m}\left(u_L-\Phi_{b,m}\right)\right|^2\md x&\leq 2\int_{\RR^N}\left|\eta_L-1\right|^2\left|P_m\Phi_{b,m}\right|^2\md x+2\int_{\RR^N}\left|\rho_L\right|^2\md x\nonumber\\
    &=2\int_{\{x\in\RR^N:\,|x|\geq L\}}\left|P_m\Phi_{b,m}\right|^2\md x+2\int_{\{x\in\RR^N:\,L\leq|x|\leq 2L\}}\left|\rho_L\right|^2\md x\nonumber\\
    &\leq C\int_L^{+\oo}t^{-2N-4}\ln^{2\sigma}(t)\cdot t^{N-1}\md t\nonumber\\
    &\to0\qquad\text{as }L\to+\oo.
    \label{converge in norm 1}
\end{align}
Additionally, we have
\begin{align*}
    \nabla^{m-1}\left(u_L-\Phi_{b,m}\right)&=\nabla^{m-1}\left(\left(\eta_L-1\right)\Phi_{b,m}\right)
    =\left(\eta_L-1\right)\nabla^{m-1}\Phi_{b,m}+T,
\end{align*}
where $T$ is an $(m-1)$-th rank tensor.  Omitting the constant factors, each component of $T$ is of the form
\begin{equation*}
     D^{\beta_1}\eta_L\cdot D^{\beta_2}\Phi_{b,m}\qquad\text{for }\left|\beta_1\right|+\left|\beta_2\right|=m-1\text{ and }\left|\beta_1\right|\geq1.
\end{equation*}
Then we have $D^{\beta_1}\eta_L\cdot D^{\beta_2}\Phi_{b,m}=O\left(|x|^{-N-1}\ln^\sigma|x|\right)$ as $|x|\to+\oo.$  Therefore, for large $L>0$, we have the following result:
\begin{align}
    \label{converge in norm 2}
    \int_{\RR^N}|x|^2\left|\nabla^{m-1}\left(u_L-\Phi_{b,m}\right)\right|^2\md x&\leq 2\int_{\RR^N}|x|^2\left|\eta_L-1\right|^2\left|\nabla^{m-1}\Phi_{b,m}\right|^2\md x+2\int_{\RR^N}|x|^2\left|T\right|^2\md x\nonumber\\
    &\leq C\int_L^{+\oo}t^{-2N}\ln^{2\sigma}(t)\cdot t^{N-1}\md t\nonumber\\
    &\to0\qquad\text{as }L\to+\oo.
\end{align}
Combining \eqref{converge in norm 1} and \eqref{converge in norm 2}, we have
\begin{equation*}
       \lim_{L\to+\oo}\|u_L-\Phi_{b,m}\|_{\mathcal{H}^m}=0.
\end{equation*}
Finally, since we proved in Proposition  \ref{sharpness} that
\begin{align*}
\lim_{L\to+\oo} \left\lvert \int_{\RR^N}\left\lvert P_{m-1}u_L\right\rvert^2\md x-\int_{\RR^N}\left\lvert P_{m-1}\Phi_{b,m}\right\rvert^2\md x\right\rvert=0,
\end{align*}
we obtain 
\begin{equation*}
    \mathcal{Q}_m\left(\Phi_{b,m}\right)=\lim_{L\to+\oo}\mathcal{Q}_m\left(u_L\right)=\sqrt{\mu_m(N)}.
\end{equation*}
Then, we conclude that $\Phi_{b,m}\in\mathcal{H}^m$ is an  extremal function  for the $m$-th order uncertainty principle.
\end{proof}
\begin{proof}[Proof of Corollary \ref{cor m order HUP}]\hfill

Let $U$ be a smooth closed 1-form and compactly supported in $\RR^N$. Since $\RR^N$ is contractible, by Poincar\'e lemma, $U$ is also exact (which also holds for $N=1$). Then we have $\vec{U}=\nabla u$ for some $u\in C^\oo_0\left(\RR^N\right)$. By Lemme \ref{integral identity 1}, we have
\begin{align*}
    \int_{\RR^N}\left|P_{k}\vec{U}\right|^2\md x=\sum_{i=1}^N\int_{\RR^N}\left|P_{k}\frac{\pt u}{\pt x_i}\right|^2\md x&=\sum_{i=1}^N\int_{\RR^N}\left|\nabla^{k}\frac{\pt u}{\pt x_i}\right|^2\md x\\
    &=\int_{\RR^N}\left|\nabla^{k+1}u\right|^2\md x\\
    &=\int_{\RR^N}\left|P_{k+1}{u}\right|^2\md x,
\end{align*}
for $k\in \NN_+$. Therefore, by Theorem \ref{thm m order HUP}, there holds
\begin{align*}
    \int_{\RR^N}|P_{m-1}\vec{U}|^2\md x\int_{\RR^N}|x|^2|\nabla^{m-2}\vec{U}|^2\md x&=\int_{\RR^N}\left|P_{m}u\right|^2\md x\int_{\RR^N}|x|^2|\nabla^{m-1}u|^2\md x\\
    &\geq \mu_m(N)\left(\int_{\RR^N}\left|P_{m-1}u\right|^2\md x\right)^2\\
    &=\mu_{m}(N)\left(\int_{\RR^N}|P_{m-2}\vec{U}|^2\md x\right)^2
\end{align*}
Furthermore, by Theorem \ref{extremal}, the constant $\mu_m(N)$ in \eqref{eq m order HUP} is attained by some $\Phi_{b,m}$. Applying Proposition \ref{sharpness} and Proposition \ref{sharpness when N=1} to  $u_L=\eta_L\Phi_{b,m}$ and $\vec{U}_L=\nabla u_L$ for $L>0$, we obtain
\begin{align*}
    \frac{\int_{\RR^N}|P_{m-1}\vec{U}_L|^2\md x\int_{\RR^N}|x|^2|\nabla^{m-2}\vec{U}_L|^2\md x}{\left(\int_{\RR^N}|P_{m-2}\vec{U}_L|^2\md x\right)^2}&=\frac{\int_{\RR^N}|P_{m}{u_L}|^2\md x\int_{\RR^N}|x|^2|\nabla^{m-1}{u_L}|^2\md x}{\left(\int_{\RR^N}|P_{m-1}{u_L}|^2\md x\right)^2}\\
    &\to \mu_m(N)\qquad\text{as }L\to+\oo.
\end{align*}
Thus, the constant $\mu_m(N)$ in \eqref{equation of cor m HUP} is optimal.
\end{proof}
\begin{proof}[Proof of Corollary \ref{cor Mazya}]
\hfill

Set $N=2$. If $\vec{U}\in C^\oo_0\left(\RR^2;\,\RR^2\right)$ and $\mathrm{div}\,\vec{U}=0$, then 
\begin{equation*}
    \vec{U}=\left(\frac{\pt u}{\pt x_2},\,-\frac{\pt u}{\pt x_1}\right)\qquad\text{for some }u\in C^\oo_0\left(\RR^2\right).
\end{equation*}
By Lemme \ref{integral identity 1}, a direct computation shows that
\begin{align*}
    \int_{\RR^2}\left|P_{k}\vec{U}\right|^2\md x=\sum_{i=1}^2\int_{\RR^2}\left|P_{k}\frac{\pt u}{\pt x_i}\right|^2\md x&=\sum_{i=1}^2\int_{\RR^2}\left|\nabla^{k}\frac{\pt u}{\pt x_i}\right|^2\md x\\
    &=\int_{\RR^2}\left|\nabla^{k+1}u\right|^2\md x\\
    &=\int_{\RR^2}\left|P_{k+1}{u}\right|^2\md x,
\end{align*}
for $k\in \NN_+$. Hence, \eqref{equation of cor Mazya} follows from Theorem \ref{thm m order HUP}.

Next, to prove that the constant $\mu_m(2)$ in \eqref{equation of cor Mazya} is optimal, we need to use the extremal function $\Phi_{b,m}$ given in Theorem \ref{extremal} and the cut-off function $\eta_L$ given by \eqref{def of eta L}. Define
\begin{equation*}
    \vec{U}_L=\left(\frac{\pt u_L}{\pt x_2},\,-\frac{\pt u_L}{\pt x_1}\right)\qquad\text{for }u_L=\eta_L\Phi_{b,m}\in C^\oo_0\left(\RR^2\right).
\end{equation*}
It follows that $\mathrm{div}\,\vec{U}_L=0$. Then,  Proposition \ref{sharpness} implies that
\begin{align*}
    \frac{\int_{\RR^2}|P_{m-1}\vec{U}_L|^2\md x\int_{\RR^2}|x|^2|\nabla^{m-2}\vec{U}_L|^2\md x}{\left(\int_{\RR^2}|P_{m-2}\vec{U}_L|^2\md x\right)^2}&=\frac{\int_{\RR^2}|P_{m}{u_L}|^2\md x\int_{\RR^2}|x|^2|\nabla^{m-1}{u_L}|^2\md x}{\left(\int_{\RR^2}|P_{m-1}{u_L}|^2\md x\right)^2}\\
    &\to \mu_m(2)\qquad\text{as }L\to+\oo.
\end{align*}
\end{proof}
\vspace{0.4cm}
\section{Proofs of Theorem \ref{thm m order HUP without tensor} and Corollary \ref{cor m order HUP without tensor}}\label{sec 6}
In this section, we will establish a version of the Heisenberg uncertainty principle without higher-order tensors  and proof the existence of the extremal functions. In fact, taking $m=2$ in Theorem \ref{thm m order HUP}, we have
\begin{equation}
    \label{equation of 2nd order HUP}
    \int_{\RR^N}\left|\Delta u\right|^2\md x\int_{\RR^N}|x|^2|\nabla u|^2\md x\geq\frac{(N+2)^2}{4}\left(\int_{\RR^N}|\nabla u|^2\md x\right)^2\qquad\text{for $u\in C^\oo_0\left(\RR^N\right)$}.
\end{equation}
Recall the classical first-order Heisenberg uncertainty principle:
\begin{equation}
\label{equation of 1st order HUP}
    \int_{\RR^N}|\nabla u|^2\md x\int_{\RR^N}|x|^2|u|^2\md x\geq\frac{N^2}{4}\left(\int_{\RR^N}|u|^2\md x\right)^2\qquad\text{for $u\in C^\oo_0\left(\RR^N\right)$}.
\end{equation}

\begin{proof}[Proof of Theorem \ref{thm m order HUP without tensor}]\hfil

For $N,\,m\geq2$ and $u\in C^\oo_0\left(\RR^N\right)$, we substitute $P_{m-2}u$ into \eqref{equation of 2nd order HUP} when $m$ is even, and substitute $P_{m-1}u$ into \eqref{equation of 1st order HUP} when $m$ is odd. Together with a density argument, we obtain 
\begin{align}
     \int_{\RR^N}\left|P_mu\right|^2\md x\int_{\RR^N}|x|^2\left|P_{m-1}u\right|^2\md x\geq \mu^\star_m(N)\left(\int_{\RR^N}\left|P_{m-1}u\right|^2\md x\right)^2\quad\text{for $u\in\mathcal{H}^m_1$},
     \label{equation m order HUP without tensor 2}
\end{align}
where $\mu^\star_m(N)=\dfrac{(N+2)^2}{4}$ if $m$ is even and $\mu_m^\star(N)=\dfrac{N^2}{4}$ if $m$ is odd. Next, we prove the optimality of the constant $\mu_m^\star(N)$. Note that $u(x)=\mathrm{e}^{-b|x|^2}\,(b>0)$ are  extremal functions for both \eqref{equation of 2nd order HUP} and \eqref{equation of 1st order HUP}. We will construct extremal functions for \eqref{equation m order HUP without tensor 2} using the operator $\mathcal{I}$ defined by \eqref{def of I}. In particular, when $m=2$, \eqref{equation m order HUP without tensor 2} is equivalent to \eqref{eq m order HUP}. Hence, we only need to consider $m\geq3$ in the following. In view of the asymptotic estimates established in Section \ref{sec 4}, we shall analyze the following two cases separately.\\
\textbf{Case 1:} $m$ is even and $N\geq2$, or $m$ is odd and $N\geq3$.

Notice that $f(t)=\mathrm{e}^{-bt^2}\in \mathcal{W}(a,\,0)$ for any $a>0$ and $b>0$. Therefore, by repeating the induction argument in Proposition \ref{expression of Ik}, for $k\in\NN_+$,  we obtain:
\begin{align*}
    \mathcal{I}^k\left[\mathrm{e}^{-bt^2}\right](r)&=\sum_{i=1}^{n_k}f_i(r)+Q_{2k-2}(r) \qquad\text{for large }r>0,
\end{align*}
where $f_i\in \mathcal{W}\left(N-2k,\,\sigma_i\right)$  with $\sigma_i\in\NN_+\cup\{0\}$ and  $Q_{2k-2}$ is a polynomial of degree at most $(2k-2)$ containing only even powers. For $j=\lfloor m/2\rfloor$, $r=|x|$ and $b>0$, we define
\begin{align}
      \Phi^\star_{b,m}(x)=\Phi^\star_{b,m}(r)=\begin{cases}
    \mathcal{I}^{j-1}\left[\mathrm{e}^{-bt^2}\right](r)-Q_{2j-4}(r),&\text{ if } m=2j,\\
    \mathcal{I}^{j}\left[\mathrm{e}^{-bt^2}\right](r)-{Q}_{2j-2}(r),&\text{ if } m=2j+1.
    \end{cases}
    \label{def of Phi star}
\end{align}
It follows that $P_{m-2}\Phi_{b,m}^\star=\mathrm{e}^{-br^2}$ if $m=2j$ and $P_{m-1}\Phi_{b,m}^\star=\mathrm{e}^{-br^2}$ if $m=2j+1$. Then we have
\begin{align*}
    \frac{\int_{\RR^N}\left|P_m\Phi_{b,m}^\star\right|^2\md x\int_{\RR^N}|x|^2\left|P_{m-1}\Phi_{b,m}^\star\right|^2\md x}{\left(\int_{\RR^N}\left|P_{m-1}\Phi_{b,m}^\star\right|^2\md x\right)^2}=\mu^\star_m(N).
\end{align*}

Next we verify that $\Phi_{b,m}^\star$ can be approximated by smooth, compactly supported functions with respect to the norm $\|\cdot\|_{\mathcal{H}^m_1}$ given in \eqref{def of seminorm 1}. By \eqref{def of Phi star}, there exists some $\sigma\in\NN_+$ depending on $m$ such that
\begin{align*}
    \Phi_{b,m}^\star(r)&\in \mathcal{W}(N-m+2,\,\sigma),\qquad\text{if }m=2j,\\
    \Phi_{b,m}^\star(r)&\in\mathcal{W}(N-m+1,\,\sigma),\qquad\text{if }m=2j+1.
\end{align*}
For any multi-index $\beta$, by \eqref{derivative of radial function}, as $|x|\to+\oo$, there holds
\begin{equation}
    D^\beta\Phi^\star_{b,m}(x)=\begin{cases}O\left(|x|^{m-2-N-|\beta|}\ln^{\sigma}|x|\right), \qquad&\text{if }m=2j,\\
    O\left(|x|^{m-1-N-|\beta|}\ln^{\sigma}|x|\right), &\text{if }m=2j+1.
    \end{cases}
    \label{estimate of Phi star}
\end{equation}
Define $u^\star_L=\eta_L\Phi^\star_{b,m}\in C^\oo_{0,\text{rad}}\left(\RR^N\right)$ for $\eta_L$ defined by \eqref{def of eta L} and $L>0$. It suffices to prove the following facts:
\begin{align}
\label{approximate Phi star}
    \begin{cases}
    \lim\limits_{L\to+\oo}\int_{\RR^N}\left|P_{m}\left(u^\star_L-\Phi_{b,m}^\star\right)\right|^2\md x=0,\\
     \lim\limits_{L\to+\oo}\int_{\RR^N}|x|^2\left|P_{m-1}\left(u^\star_L-\Phi_{b,m}^\star\right)\right|^2\md x=0,\\
      \lim\limits_{L\to+\oo}\int_{\RR^N}\left|P_{m-1}\left(u_L^\star-\Phi_{b,m}^\star\right)\right|^2\md x=0.
    \end{cases}
\end{align}
Notice that
\begin{align*}
    P_{m}\left(u^\star_L-\Phi^\star_{b,m}\right)=\left(\eta_L-1\right)P_m\Phi^\star_{b,m}+\rho^\star_L,
\end{align*}
where $\rho^\star_L$ is a linear combination of products in the form $D^{\beta_1}\eta_L\cdot D^{\beta_2}\Phi^\star_{b,m}$ for $|\beta_1|+|\beta_2|=m$ and $|\beta_1|\geq1$. From  \eqref{estimate of eta} and \eqref{estimate of Phi star}, as $L\to+\oo$, we have
\begin{align*}
    \int_{\RR^N}\left|P_{m}\left(u^\star_L-\Phi_{b,m}^\star\right)\right|^2\md x\leq\begin{cases}
    C\int_L^{+\oo}t^{-2N-4}\ln^{2\sigma}(t)\cdot t^{N-1}\md t\to0,\qquad\text{if }m=2j,\\
    C\int_L^{+\oo}t^{-2N-2}\ln^{2\sigma}(t)\cdot t^{N-1}\md t\to0,\qquad\text{if }m=2j+1.
    \end{cases}
\end{align*}
Similarly, we have
\begin{align*}
    \int_{\RR^N}\left|P_{m-1}\left(u^\star_L-\Phi_{b,m}^\star\right)\right|^2\md x\leq\begin{cases}
    C\int_L^{+\oo}t^{-2N-2}\ln^{2\sigma}(t)\cdot t^{N-1}\md t\to0,\qquad\text{if }m=2j,\\
    C\int_L^{+\oo}t^{-2N}\ln^{2\sigma}(t)\cdot t^{N-1}\md t\to0,\qquad\text{if }m=2j+1.
    \end{cases}
\end{align*}
\begin{align*}
    \int_{\RR^N}|x|^2\left|P_{m-1}\left(u^\star_L-\Phi_{b,m}^\star\right)\right|^2\md x\leq\begin{cases}
    C\int_L^{+\oo}t^{-2N}\ln^{2\sigma}(t)\cdot t^{N-1}\md t\to0,\qquad\text{if }m=2j,\\
    C\int_L^{+\oo}t^{-2N+2}\ln^{2\sigma}(t)\cdot t^{N-1}\md t\to0,\qquad\text{if }m=2j+1.
    \end{cases}
\end{align*}
Thus \eqref{approximate Phi star} holds and $\mu^\star_m(N)$ is attained by $\Phi_{b,m}^\star\in \mathcal{H}^m_1$. Note that when $m=2j+1$ and $N=2$, we cannot deduce $$\int_{\RR^N}|x|^2\left|P_{m-1}\left(u^\star_L-\Phi_{b,m}^\star\right)\right|^2\md x\to0$$ from the above estimate. Therefore, a more refined analysis is required in the next case.\\
\textbf{Case 2:} $m$ is odd and $N=2$.

Denote $j=\lfloor m/2\rfloor$. In this case, we find that the truncation described above is not sufficient to approximate $\Phi_{b,m}^\star$. To address this problem, we need to provide a more refined description of $\mathcal{I}^j\left[\mathrm{e}^{-bt^2}\right]$ when $N=2$. For $k\in\NN_+$, we will prove the following fact by induction:
\begin{align}
    \mathcal{I}^k\left[\mathrm{e}^{-bt^2}\right](r)=\tilde{Q}_{2k-2}(r)\ln(r)+Q_{2k-2}(r)+R_k(r)\qquad \text{for large $r>0$,}
    \label{expression of Ik when N=2}
\end{align}
where $Q_{2k-2}$ and $\tilde{Q}_{2k-2}$ are  polynomials of degree at most $(2k-2)$ containing only even powers and $R_k(r)\in\mathcal{W}(a,\,0)$ for any $a>0$. In fact, since  $f(t)=\mathrm{e}^{-bt^2}\in \mathcal{W}(a,\,0)$ for any $a>0$, it follows from the proof of \textbf{Case 1} in Lemma \ref{estimate of I} that
\begin{align*}
    \mathcal{I}\left[\mathrm{e}^{-bt^2}\right](r)=\tilde{C}\ln(r)+C+R_1(r)\qquad \text{for large $r>0$,}
\end{align*}
where $\mathcal{R}_1(r)\in \mathcal{W}(a,\,0)$ for any $a>0$, and the constants $C$ and $\tilde{C}$ are independent of $r$. Assuming that \eqref{expression of Ik when N=2} is true for $k$ and large $r>r_0$, we will show that it remains true for $k+1$. Notice that
\begin{align*}
    \mathcal{I}^{k+1}\left[\mathrm{e}^{-bt^2}\right](r)&=\int_{0}^{r_0}s^{-1}\int_0^s t\cdot\mathcal{I}^k\left[\mathrm{e}^{-bt^2}\right](t)\md t\md s\\
    &\qquad+\int_{r_0}^rs^{-1}\left[\int_0^{r_0}t\cdot\mathcal{I}^k\left[\mathrm{e}^{-bt^2}\right](t)\md t+\int_{r_0}^st\cdot\mathcal{I}^k\left[\mathrm{e}^{-bt^2}\right](t)\md t\right]\md s\\
    &=C+\int_{r_0}^rs^{-1}\left[C+\int_{r_0}^st\cdot\mathcal{I}^k\left[\mathrm{e}^{-bt^2}\right](t)\md t\right]\md s\\
    &=C+C'\ln(r)+\int_{r_0}^rs^{-1}\int_{r_0}^st\cdot\mathcal{I}^k\left[\mathrm{e}^{-bt^2}\right](t)\md t\md s.
\end{align*}
Substitute $\mathcal{I}^k\left[\mathrm{e}^{-bt^2}\right](r)=\tilde{Q}_{2k-2}(r)\ln(r)+Q_{2k-2}(r)+R_k(r)$ into the above integral. In particular, for $R_k\in\mathcal{W}\left(a,\,0\right)$, repeating the proof of \textbf{Case 1} in  Lemma \ref{estimate of I}, except that the lower limits of the two integrals in $\mathcal{I}$ are replaced by $r_0$, we obtain
\begin{align*}
    \int_{r_0}^rs^{-1}\int_{r_0}^s{t} R_k(t)\md t\md s =C+C'\ln(r)+\tilde{R}_{k+1}(r),
\end{align*}
where $\tilde{R}_{k+1}(r)\in \mathcal{W}(a,\,0)$ for any $a>0$. It follows that
\begin{align*}
    \mathcal{I}^{k+1}\left[\mathrm{e}^{-bt^2}\right](r)=\tilde{Q}_{2k}(r)\ln(r)+Q_{2k}(r)+R_{k+1}(r),
\end{align*}
for large $r>0$ and $\tilde{R}_{k+1}(r)\in \mathcal{W}(a,\,0)$ for any $a>0$. Hence, \eqref{expression of Ik when N=2} is true.  Now, for $r=|x|$ and $b>0$, we define:
\begin{align}
      \Phi^\star_{b,m}(x)=\Phi^\star_{b,m}(r)=\mathcal{I}^{j}\left[\mathrm{e}^{-bt^2}\right](r)-{Q}_{2j-2}(r).
    \label{def of Phi star when N=2}
\end{align}
Then $\Phi_{b,m}^\star(r)\in \mathcal{W}(2-2j,\,1)$. Next, we introduce a new cut-off function. Let $\eta^\star\in C_0^\oo\left(\RR;\,[0,1]\right)$ satisfying $\eta^\star=1$ for $|x|\leq 1/4$ and $\eta^\star=0$ for $|x|\geq 3/4$. For $L>0$, we define
\begin{equation*}
    \eta_L^\star(x)=\eta_L^\star(r)=\begin{cases}
    1,&\text{if $|x|\leq L$},\\
    \eta^\star\left(\frac{\ln|x|}{\ln L}-1\right),&\text{if $L<|x|< L^2$},\\
    0,&\text{if $|x|\geq L^2$}.
    \end{cases}
\end{equation*}
For $L<|x|<L^2$ and $k\in\NN_+$, by Fa\`a di Bruno's formula (see \cite{faadibruno1857}), we have
\begin{align}
    \frac{\md^k }{\md r^k}\eta_L^\star(r) =\frac{\left(\eta^\star\right)'\left(\frac{\ln r}{\ln L}-1\right)}{\ln L}\left(\ln r\right)^{(k)}+O\left(\frac{1}{r^k\ln^2 L}\right).
    \label{derivative of eta star}
\end{align}
Here $\left|O\left(\frac{1}{r^k\ln^2 L}\right)\right|\leq C\frac{1}{r^k\ln^2 L}$ when $L$ is sufficiently large. Define  $u^\star_L=\eta^\star_L\Phi^\star_{b,m}\in C^{\oo}_{0,\text{rad}}\left(\RR^2\right)$. As $L\to+\oo$, we have
\begin{align}
\label{approximate Phi star 1 when N=2}
    \int_{\RR^2}\left|P_{m}\left(u^\star_L-\Phi_{b,m}^\star\right)\right|^2\md x\leq \frac{C}{\ln^2 L}\int_L^{+\oo}t^{-6}\ln^{2}(t)\cdot t\md t\to0, 
\end{align}
Similarly, we have
\begin{align}
\label{approximate Phi star 2 when N=2}
    \int_{\RR^2}\left|P_{m-1}\left(u^\star_L-\Phi_{b,m}^\star\right)\right|^2\md x\leq \frac{C}{\ln^2 L}\int_L^{+\oo}t^{-4}\ln^{2}(t)\cdot t\md t\to0.
\end{align}
Finally, we need to estimate $\int_{\RR^2}|x|^2\left|P_{m-1}\left(u^\star_L-\Phi_{b,m}^\star\right)\right|^2\md x$. In fact, by \eqref{expression of Ik when N=2} and \eqref{def of Phi star when N=2}, for large $L>0$, we have
\begin{align*}
    u^\star_L(x)-\Phi_{b,m}^\star(x)=\left(\eta_L^\star(x)-1\right)\Phi^\star_{b,m}(x)=\left(\eta_L^\star(x)-1\right)\left[\tilde{Q}_{2j-2}(|x|)\ln|x|+R_j(|x|)\right].
\end{align*}
Without loss of generality, we may assume that the  coefficient of $r^{2j-2}$ in $\tilde Q_{2j-2}(r)$ is 1. Then, let  $w_1(r)=r^{2j-2}\ln r$, we have $w_2(r)=\left(\Phi_{b,m}^{\star}(r)- w_1(r)\right)\in\mathcal{W}(4-2j,\,1)$ and 
\begin{align*}
    \int_{\RR^2}|x|^2\left|P_{m-1}\left(\eta^\star_Lw_2-w_2\right)\right|^2\md x\leq \frac{C}{\ln^2 L}\int_L^{+\oo}t^{-6}\ln^{2}(t)\cdot t\md t\to0\qquad\text{as }L\to+\oo.
\end{align*}
Thus, we only need to consider $\int_{\RR^2}|x|^2\left|P_{m-1}\left(\eta^\star_L w_1-w_1\right)\right|^2\md x$. By a direct computation, we have $\Delta^j w_1=\Delta^{j}\left(r^{2j-2}\ln r\right)=0$ and
\begin{align*}
    P_{m-1}\left(\eta^\star_L w_1-w_1\right)=\Delta^j\left(\eta^\star_L w_1-w_1\right)=\sum D^{\beta_1}\eta_L^\star\cdot D^{\beta_2}w_1,
\end{align*}
where $|\beta_1|+|\beta_2|=2j$ and $|\beta_1|>0$. Recall that by \eqref{derivative of radial function}, for radially symmetric function $\eta^\star_L(r)$, we have the following
\begin{align}
\label{partial derivative of eta star}
    D^\beta \eta^\star_L(x)=\sum_{l=1}^{|\beta|}f_{\beta,l}(x){\eta^\star_L}^{(l)}(r).
\end{align}
Here $f_{\beta,l}$ satisfies
\begin{equation*}
    |f_{\beta,l}(x)|\leq C(\beta,\,l)|x|^{l-|\beta|}.
\end{equation*}
Note that $P_{m-1}\left(\eta^\star_L w_1-w_1\right)=0$ outside the ring $\Omega=\left\{x\in\RR^2:\,L\leq|x|\leq L^2\right\}$. By \eqref{derivative of eta star} and \eqref{partial derivative of eta star}, together with the fact that $\Delta^j \left(r^{2j-2}\ln^2r\right)=C/r^2$, we have
\begin{align*}
    \int_{\RR^2}|x|^2\left|P_{m-1}\left(\eta^\star_L w_1-w_1\right)\right|^2\md x&=\int_{\Omega}|x|^2\left|\sum_{\beta_1,\beta_2} \sum_{l=1}^{|\beta_1|}f_{\beta_1,l}(x){\eta^\star_L}^{(l)}(r)\cdot D^{\beta_2}w_1\right|^2\md x\\
    &\leq C \int_{\Omega}|x|^2\left|\sum_{\beta_1,\beta_2} \sum_{l=1}^{|\beta_1|}f_{\beta_1,l}(x)O\left(\frac{1}{r^l\ln^2L}\right)\cdot D^{\beta_2}w_1\right|^2\md x\\
    &\qquad+C \int_{\Omega}|x|^2\left|\sum_{\beta_1,\beta_2} \sum_{l=1}^{|\beta_1|}f_{\beta_1,l}(x)\frac{\left(\ln r\right)^{(l)}}{\ln L}\cdot D^{\beta_2}w_1\right|^2\md x\\
    &\leq \frac{C}{\ln^4L}\int_{L}^{L^2}t^{-1}\ln^2(t)\md t+\frac{C}{\ln^2L}\int_{\Omega}|x|^2\left|\Delta^j(r^{2j-2}\ln^2 r)\right|^2\md x\\
    &\leq\frac{C}{\ln^4L}\int_{L}^{L^2}t^{-1}\ln^2(t)\md t+\frac{C}{\ln^2L}\int_L^{L^2}t^{-1}\md t\\
    &\to0\qquad\text{as $L\to+\oo$.}
\end{align*}
Therefore, we deduce that
\begin{align*}
    \lim_{L\to+\oo}\int_{\RR^2}|x|^2\left|P_{m-1}\left(u^\star_L-\Phi_{b,m}^\star\right)\right|^2\md x=0.
\end{align*}
Combining \eqref{approximate Phi star 1 when N=2} and \eqref{approximate Phi star 2 when N=2}, we have $\Phi_{b,m}^\star\in \mathcal{H}^m_1$ and 
\begin{align*}
    \lim_{L\to+\oo}\frac{\int_{\RR^N}\left|P_mu_L^\star\right|^2\md x\int_{\RR^N}|x|^2\left|P_{m-1}u_L^\star\right|^2\md x}{\left(\int_{\RR^N}\left|P_{m-1}u_L^\star\right|^2\md x\right)^2}&= \frac{\int_{\RR^N}\left|P_m\Phi_{b,m}^\star\right|^2\md x\int_{\RR^N}|x|^2\left|P_{m-1}\Phi_{b,m}^\star\right|^2\md x}{\left(\int_{\RR^N}\left|P_{m-1}\Phi_{b,m}^\star\right|^2\md x\right)^2}\\
    &=\mu^\star_m(N).
\end{align*}
\end{proof}
\begin{proof}[Proof of Corollary \ref{cor m order HUP without tensor}]\hfil

In fact, for $u\in C^\oo_{0,\text{rad}}\left(\RR^N\right)$, we have $    \mathcal{R}_1 u=u'(r)$ for $r=|x|$ and $\mathcal{R}_2 u=\Delta u$. It follows that $\left|\mathcal{R}_m u\right|=\left|P_{m}u\right|$. Therefore, by Theorem \ref{thm m order HUP without tensor}, we obtain \eqref{equation cor m order HUP without tensor} holds. Moreover, since the extremal functions given in Theorem \ref{thm m order HUP without tensor} are  radially symmetric, they are also extremal functions for \eqref{equation cor m order HUP without tensor}. Hence the constant $\mu_m^\star(N)$ in \eqref{equation cor m order HUP without tensor} is optimal.
\end{proof}
\vspace{2cm}
\section{Proof of Theorem \ref{thm m order HyUP}}\label{sec 7}
In this section, we establish the sharp  Hydrogen uncertainty principle for radially symmetric functions. First, using the “expanding the squares” method, we prove the following higher-order Hydrogen uncertainty principle.
\begin{proposition}
\label{Prop HyUP}
For $N,\,m\in\NN_+$ and $N\geq2$, we have
\begin{align*}
    \int_{\RR^N}\left|\mathcal{R}_m u\right|^2\md x\int_{\RR^N}\left|\mathcal{R}_{m-1}u\right|^2\md x\geq\bar{\mu}_m(N)\left(\int_{\RR^N}\frac{\left|\mathcal{R}_{m-1}u\right|^2}{|x|}\md x\right)^2 \qquad\text{for $u\in\mathcal{H}_3^m$},
\end{align*}
where $\bar{\mu}_m(N)=\dfrac{(N+1)^2}{4}$ if $m$ is even and $\bar{\mu}_m(N)=\dfrac{(N-1)^2}{4}$ if $m$ is odd.
\end{proposition}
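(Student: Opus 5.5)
The plan is to reduce to the first-order and second-order hydrogen uncertainty principles for radial functions and then apply the "expanding the squares" method. The reduction mirrors the proof of Theorem \ref{thm m order HUP without tensor}.

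For radial $u\in C^\oo_{0,\text{rad}}\left(\RR^N\right)$ we have $\mathcal{R}_2u=\Delta u$ and $\mathcal{R}_1u=u'(r)$. Hence $\mathcal{R}_{m}u$ equals $\Delta^j u$ or $\partial_r\Delta^j u$, and $\Delta^j u$ is again radial, smooth and compactly supported.
\begin{itemize}
\item If $m=2j+1$ is odd, set $w=\mathcal{R}_{m-1}u=\Delta^ju$. Then $\mathcal{R}_mu=w'$, and the claim is the first-order hydrogen inequality for $w$.
\item If $m=2j$ is even, set $w=\Delta^{j-1}u$. Then $\mathcal{R}_{m-1}u=w'$ and $\mathcal{R}_mu=\Delta w$, and the claim is the second-order radial inequality.
\end{itemize}
The case $m=1$ is just the classical inequality for $u$ itself.

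\textbf{Odd case (first order).} Expand
\[
0\le\int_{\RR^N}\left|w'+b\,w\right|^2\md x .
\]
The cross term is computed by
\[
2b\int_{\RR^N} w\,w'\md x=b\int_0^\oo r^{N-1}(w^2)'\md r\,|\mathbb S^{N-1}|=-b(N-1)\int_{\RR^N}\frac{w^2}{|x|}\md x .
\]
This gives
\[
\int|w'|^2+b^2\int w^2\ge b(N-1)\int\frac{w^2}{|x|}.
\]
Optimizing in $b>0$ (choose $b=\|w'\|_2/\|w\|_2$) yields the constant $\frac{(N-1)^2}{4}$.

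\textbf{Even case (second order).} Write $g=w'$, so that $\Delta w=g'+\frac{N-1}{r}g$. Expand
\[
0\le\int_{\RR^N}\left|g'+\frac{N-1}{r}g+b\,g\right|^2\md x .
\]
The goal is to show
\[
\int|\Delta w|^2+b^2\int g^2\ge b(N+1)\int\frac{g^2}{|x|}
\]
once the cross term is computed, i.e.
\[
2b\int\left(g'+\frac{N-1}{r}g\right)g\,\md x=2b\int_0^\oo\left(r^{N-1}gg'+(N-1)r^{N-2}g^2\right)\md r\,|\mathbb S^{N-1}| .
\]
Integration by parts gives $\int r^{N-1}(g^2/2)'=-\frac{N-1}{2}\int r^{N-2}g^2$. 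So the cross term equals
\[
2b\left(-\tfrac{N-1}{2}+(N-1)\right)\int\frac{g^2}{|x|}=b(N-1)\int\frac{g^2}{|x|}.
\]
This has the wrong sign: expanding $|\Delta w+bg|^2$ only produces a constant $(N-1)$. Therefore I would instead expand $\left|\Delta w-b\,g\right|^2$ combined with a multiple of $g/r$ chosen to shift the constant to $N+1$. Concretely, use $\Delta w - b g = \left(g'+\frac{N-1}{r}g-bg\right)$, or equivalently expand
\[
\left|g'-\tfrac{\alpha}{r}g+bg\right|^2
\]
and then use the identity $\int|\Delta w|^2=\int|g'|^2+(N-1)\int g^2/r^2$ for radial functions. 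One then chooses $\alpha$ so that the $\int g^2/r^2$ terms cancel exactly or leave a nonnegative remainder. With $\alpha=-1$ the linear-in-$b$ coefficient becomes $N+1$: the cross term is $2b\int(g'+g/r)g=b(-(N-1)+2)\int g^2/r$. I expect tuning this combination, and checking the weighted Hardy-type remainder for the $g^2/r^2$ terms, to be the main obstacle. If the direct approach stalls, I would fall back on the known radial second-order result from \cite{MR4470731}, applied to $w$, which gives exactly $\frac{(N+1)^2}{4}$ for all $N\ge2$.

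\textbf{Extension to $\mathcal{H}^m_3$.} Finally, a density argument extends the inequality from $C^\oo_{0,\text{rad}}$ to $\mathcal{H}^m_3$. The right-hand side is controlled via $\int|\mathcal{R}_{m-1}u|^2/|x|\le \|\mathcal{R}_{m-1}u\|_2\|\mathcal{R}_mu\|_2\cdot C$, which itself follows from the same expansion.
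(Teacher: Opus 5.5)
\textbf{Gap in the even case.} Your reduction ($w=\Delta^j u$ for odd $m$; $w=\Delta^{j-1}u$, $g=w'=\mathcal{R}_{m-1}u$ for even $m$) and your odd-case argument are exactly the paper's. The even case, however, does not close as written.

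With $\alpha=-1$ your own computation gives
\[
2b\int g\,(g'+g/r)\,\md x=b(3-N)\int \frac{g^2}{|x|}\,\md x,
\]
not $-b(N+1)\int g^2/|x|\,\md x$, because the $g^2/r$ contribution enters with the wrong sign. Carrying this choice through gives only the constant $(N-3)^2/4$, using
\[
\int|g'+g/r|^2\,\md x=\int|g'|^2\,\md x+(3-N)\int g^2/r^2\,\md x\le\int|\Delta w|^2\,\md x .
\]
In general, $\int|g'-\alpha g/r|^2\,\md x=\int|g'|^2\,\md x+\alpha(\alpha+N-2)\int g^2/r^2\,\md x$. This is at most $\int|\Delta w|^2\,\md x=\int|g'|^2\,\md x+(N-1)\int g^2/r^2\,\md x$ exactly when $-(N-1)\le\alpha\le 1$. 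The linear-in-$b$ coefficient you gain is $N-1+2\alpha$. So the correct choice is the endpoint $\alpha=+1$, not $\alpha=-1$.

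\textbf{The fix.} Take $\alpha=1$, i.e.\ expand $0\le\int|g'-g/r+bg|^2\,\md x$, which is what the paper does. Then no Hardy remainder appears, because the $g^2/r^2$ terms match exactly:
\[
\int|g'-g/r|^2\,\md x=\int|g'|^2\,\md x+(N-1)\int g^2/r^2\,\md x=\int\Big|g'+\tfrac{N-1}{r}g\Big|^2\,\md x=\int|\mathcal{R}_m u|^2\,\md x .
\]
For $N=2$, the boundary term at $r=0$ vanishes since $g(0)=w'(0)=0$. The cross term is
\[
2b\int g\,(g'-g/r)\,\md x=-b(N-1)\int g^2/|x|\,\md x-2b\int g^2/|x|\,\md x=-b(N+1)\int g^2/|x|\,\md x ,
\]
and optimizing in $b$ yields $(N+1)^2/4$.

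Falling back on a citation of \cite{MR4470731} would sidestep the issue, but then the even case is not proved by your argument. Your density step is fine.
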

\begin{proof}
Denote $j=\lfloor m/2\rfloor$ and $r=|x|$.
\vspace{0.5cm}
\\
\textbf{Case 1:} $m=2j+1$. Let $u\in C^\oo_{0,\text{rad}}\left(\RR^N\right)\backslash\{0\}$ and $v=\mathcal{R}_{m-1}u=\mathcal{R}_2^j u$. For $b>0$, there holds:
\begin{align*}
    0\leq \int_{\RR^N}\left|v'(r)+bv(r)\right|^2\md x=\int_{\RR^N}|v'(r)|^2\md x+b^2\int_{\RR^N}|v(r)|^2\md x+2b\int_{\RR^N}v(r)v'(r)\md x.
\end{align*}
Note that
\begin{align*}
    2b\int_{\RR^N}v(r)v'(r)\md x&=b\left|\mathbb{S}^{N-1}\right|\int_0^{+\oo}2v(r)v'(r)r^{N-1}\md r\\
    &=-b(N-1)\left|\mathbb{S}^{N-1}\right|\int_0^{+\oo}\frac{v^2}{r}r^{N-1}\md r\\
    &=-b(N-1)\int_{\RR^N}\frac{v^2}{r}\md x.
\end{align*}
\\
It follows that
\begin{align*}
    b(N-1)\int_{\RR^N}\frac{v^2}{r}\md x\leq \int_{\RR^N}|v'(r)|^2\md x+b^2\int_{\RR^N}|v(r)|^2\md x
\end{align*}
Since $u\not\equiv0$, we can take
\begin{equation*}
    b=\left(\int_{\RR^N}\left|v'(r)\right|^2\md x\right)^{\frac{1}{2}}\left(\int_{\RR^N}\left|v\right|^2\md x\right)^{-\frac{1}{2}}.
\end{equation*}
Together with the fact that $v=\mathcal{R}_{m-1}u$ and $|v'(r)|=|\mathcal{R}_1\mathcal{R}_{m-1}u|$, we obtain
\begin{align*}
    \left(\frac{N-1}{2}\right)^2\left(\int_{\RR^N}\frac{\left|\mathcal{R}_{m-1}u\right|^2}{|x|}\md x\right)^2\leq\int_{\RR^N}\left|\mathcal{R}_mu\right|^2\md x\int_{\RR^N}\left|\mathcal{R}_{m-1}u\right|^2\md x.
\end{align*}
\vspace{0.5cm}
\\
\textbf{Case 2:} $m=2j$. Let $u\in C^\oo_{0,\text{rad}}\left(\RR^N\right)\backslash\{0\}$ and $v=\mathcal{R}_{m-1}u=\mathcal{R}_1\mathcal{R}_2^{j-1} u$. For $b>0$, there holds:
\begin{align}
    0&\leq \int_{\RR^N}\left|v'(r)-\frac{v(r)}{r}+bv(r)\right|^2\md x\nonumber\\
    &=\int_{\RR^N}\left|v'(r)-\frac{v(r)}{r}\right|^2\md x+b^2\int_{\RR^N}|v(r)|^2\md x+2b\int_{\RR^N}v(r)\left(v'(r)-\frac{v(r)}{r}\right)\md x.
    \label{expand squares HyUP 1}
\end{align}
By integration by parts, we deduce that
\begin{align}
    \int_{\RR^N}\left|v'(r)-\frac{v(r)}{r}\right|^2\md x&=\int_{\RR^N}\left(\left|v'(r)\right|^2+\frac{v^2(r)}{r^2}-2\frac{v(r)v'(r)}{r}\right)\md x\nonumber\\
    &=\int_{\RR^N}\left(\left|v'(r)\right|^2+\frac{v^2(r)}{r^2}+(N-2)\frac{v^2(r)}{r^2}\right)\md x\nonumber\\
    &=\int_{\RR^N}\left(\left|v'(r)\right|^2+(N-1)\frac{v^2(r)}{r^2}\right)\md x\nonumber\\
    &=\int_{\RR^N}\left(\left|v'(r)\right|^2+(N-1)^2\frac{v^2(r)}{r^2}-(N-1)(N-2)\frac{v^2(r)}{r^2}\right)\md x\nonumber\\
    &=\int_{\RR^N}\left(\left|v'(r)\right|^2+(N-1)^2\frac{v^2(r)}{r^2}+(N-1)\frac{2v(r)v'(r)}{r}\right)\md x\nonumber\\
    &=\int_{\RR^N}\left|v'(r)+\frac{N-1}{r}v(r)\right|^2\md x\nonumber\\
    &=\int_{\RR^N}\left|\mathcal{R}_2\circ\mathcal{R}_2^{j-1}u\right|^2\md x. \label{expand squares HyUP 2}\\
    2b\int_{\RR^N}\left|v(r)\left(v'(r)-\frac{v(r)}{r}\right)\right|\md x&=b\int_{\RR^N}2v(r)v'(r)\md x-2b\int_{\RR^N}\frac{v^2(r)}{r}\md x\nonumber\\
    &=-b(N-1)\int_{\RR^N}\frac{v^2(r)}{r}\md x-2b\int_{\RR^N}\frac{v^2(r)}{r}\md x\nonumber\\
    &=-b(N+1)\int_{\RR^N}\frac{v^2(r)}{r}\md x. \label{expand squares HyUP 3}
\end{align}
Substituting \eqref{expand squares HyUP 2} and \eqref{expand squares HyUP 3} into \eqref{expand squares HyUP 1}, we obtain
\begin{align*}
    b(N+1)\int_{\RR^N}\frac{\left|\mathcal{R}_{m-1}u\right|^2}{|x|}\md x\leq \int_{\RR^N}\left|\mathcal{R}_m u\right|^2\md x+b^2\int_{\RR^N}\left|\mathcal{R}_{m-1}u\right|^2\md x.
\end{align*}
Because $u\not\equiv0$, we can choose 
\begin{equation*}
    b=\left(\int_{\RR^N}\left|\mathcal{R}_m u\right|^2\md x\right)^{\frac{1}{2}}\left(\int_{\RR^N}\left|\mathcal{R}_{m-1}u\right|^2\md x\right)^{-\frac{1}{2}}.
\end{equation*}
It follows that
\begin{align*}
    \left(\frac{N+1}{2}\right)^2\left(\int_{\RR^N}\frac{\left|\mathcal{R}_{m-1}u\right|^2}{|x|}\md x\right)^2\leq\int_{\RR^N}\left|\mathcal{R}_mu\right|^2\md x\int_{\RR^N}\left|\mathcal{R}_{m-1}u\right|^2\md x.
\end{align*}

Finally, combining the results in \textbf{Case 1} and \textbf{Case 2},  together with the density argument, we obtain that \eqref{equation m order HyUP} holds in the space $\mathcal{H}^m_3$ for $m\in\NN_+$.
\end{proof}

From the proof above, to find extremal functions, one can start from the two ordinary differential equations $v'(r)+bv(r)=0$ and $v''(r)-r^{-1}v'(r)+bv'(r)=0$ for $b>0$, whose solutions are $g_1(r)=\mathrm{e}^{-br}$ and $g_2(r)=(1+br)\mathrm{e}^{-br}$, respectively. A direct computation shows that
\begin{align*}
    \frac{\int_{\RR^N}\left|\mathcal{R}_1 \left(g_1(|x|)\right)\right|^2\md x\int_{\RR^N}\left|g_1(|x|)\right|^2\md x}{\left(\int_{\RR^N}|x|^{-1}\left|g_1(|x|)\right|^2\md x\right)^2}&=\frac{(N-1)^2}{4},\\
    \frac{\int_{\RR^N}\left|\mathcal{R}_2 \left(g_2(|x|)\right)\right|^2\md x\int_{\RR^N}\left|\mathcal{R}_1\left(g_2(|x|)\right)\right|^2\md x}{\left(\int_{\RR^N}|x|^{-1}\left|\mathcal{R}_1\left(g_2(|x|)\right)\right|^2\md x\right)^2}&=\frac{(N+1)^2}{4}.
\end{align*}
We will then use the operator $\mathcal{I}$ defined by \eqref{def of I} to construct the corresponding extremal functions.
\vspace{1cm}
\begin{proof}[Proof of Theorem \ref{thm m order HyUP}]\hfil

Since the higher-order Hydrogen uncertainty principle in $\mathcal{H}^m_3$ has already been proved in Proposition \ref{Prop HyUP}, we only need to show the existence of the extremal functions. Notice that $g_1(r)=\mathrm{e}^{-br}\in \mathcal{W}(a,\,0)$ and $g_2(r)=(1+br)\mathrm{e}^{-br}\in \mathcal{W}(a,\,0)$ for any $a>0$ and $b>0$. Thus, following the induction argument in Proposition \ref{expression of Ik}, we have
\begin{align*}
    \mathcal{I}^k\left[\mathrm{e}^{-bt}\right](r)&=\sum_{i=1}^{n_k}f_i(r)+Q_{2k-2}(r),\\
    \mathcal{I}^k\left[(1+bt)\mathrm{e}^{-bt}\right](r)&=\sum_{i=1}^{\tilde{n}_k}\tilde{f}_i(r)+\tilde{Q}_{2k-2}(r),
\end{align*}
for large $r>0$, where $k\in\NN_+$, $f_i\in \mathcal{W}\left(N-2k,\,\sigma_i\right)$  with $\sigma_i\in\NN_+\cup\{0\}$ and $\tilde{f}_i\in \mathcal{W}\left(N-2k,\,\tilde{\sigma}_i\right)$ with $\sigma_i,\,\tilde{\sigma}_i\in\NN_+\cup\{0\}$. $Q_{2k-2}$ and $\tilde{Q}_{2k-2}$ are polynomials of degree at most $(2k-2)$  containing only even powers. For $j=\lfloor m/2\rfloor$ and $r=|x|$, we define
\begin{align}
      \Psi_{b,m}(x)=\Psi_{b,m}(r)=\begin{cases}
    \mathrm{e}^{-br},&\text{ if }m=1,\\
    (1+br)\mathrm{e}^{-br},&\text{ if }m=2,\\
    \mathcal{I}^{j}\left[\mathrm{e}^{-bt}\right](r)-Q_{2j-2}(r),&\text{ if } m=2j+1\quad\text{ for }j\geq1,\\
    \mathcal{I}^{j-1}\left[(1+bt)\mathrm{e}^{-bt}\right](r)-\tilde{Q}_{2j-4}(r),&\text{ if } m=2j\quad\text{ for }j\geq2,
    \end{cases}
    \label{def of Psi}
\end{align}
It follows that $\mathcal{R}_{m-1}\Psi_{b,m}=\mathrm{e}^{-b|x|}$ if $m$ is odd and $\mathcal{R}_{m-2}\Psi_{b,m}=(1+b|x|)\mathrm{e}^{-b|x|}$ if $m$ is even. Then we have
\begin{align*}
    \frac{\int_{\RR^N}\left|\mathcal{R}_m\Psi_{b,m}\right|^2\md x\int_{\RR^N}\left|\mathcal{R}_{m-1}\Psi_{b,m}\right|^2\md x}{\left(\int_{\RR^N}|x|^{-1}\left|\mathcal{R}_{m-1}\Psi_{b,m}\right|^2\md x\right)^2}=\bar\mu_m(N)=\begin{cases}
    \frac{(N-1)^2}{4},\qquad\text{if $m$ is odd,}\\
    \frac{(N+1)^2}{4},\qquad\text{if $m$ is even.}
    \end{cases}
\end{align*}

Next we verify that $\Psi_{b,m}$ can be approximated by smooth, compactly supported functions. Note that $\Psi_{b,m}\in C^\oo\left(\RR^N\backslash\{0\}\right)$ is not smooth at the origin. We first derive the asymptotic estimate as $|x|\to+\oo$ and then deal with the smoothness at the origin. By \eqref{def of Psi}, there exists some $\sigma\in\NN_+$ depending on $m$ such that $\Psi_{b,m}(r)\in \mathcal{W}(N-m+1,\,\sigma).$ For any multi-index $\beta$, by \eqref{derivative of radial function}, as $|x|\to+\oo$, there holds
\begin{equation}
    D^\beta\Psi_{b,m}(x)=
    O\left(|x|^{m-1-N-|\beta|}\ln^{\sigma}|x|\right).
    \label{estimate of Psi}
\end{equation}
Let $\bar u_L=\eta_L\Psi_{b,m}$ for $\eta_L$ defined by \eqref{def of eta L} and $L>0$. We will prove the following facts:
\begin{align}
\label{approximate Psi}
    \begin{cases}
    \lim\limits_{L\to+\oo}\int_{\RR^N}\left|\mathcal{R}_{m}\left(\bar u_L-\Psi_{b,m}\right)\right|^2\md x=0,\\
     \lim\limits_{L\to+\oo}\int_{\RR^N}\left|\mathcal{R}_{m-1}\left(\bar u_L-\Psi_{b,m}\right)\right|^2\md x=0,\\
      \lim\limits_{L\to+\oo}\int_{\RR^N}|x|^{-1}\left|\mathcal{R}_{m-1}\left(\bar u_L-\Psi_{b,m}\right)\right|^2\md x=0.
    \end{cases}
\end{align}
Recall that for radially symmetric function $u$, we have $\left|\mathcal{R}_1u\right|=|\nabla u|$ and $\mathcal{R}_2u=\Delta u$. Then we deduce that
\begin{align*}
    \left|\mathcal{R}_{m}\left(\bar u_L-\Psi_{b,m}\right)\right|=\left|{P}_{m}\left(\bar u_L-\Psi_{b,m}\right)\right|=\left|\left(\eta_L-1\right)P_m\Psi_{b,m}+\bar\rho_L\right|,
\end{align*}
where $\bar\rho_L$ is a linear combination of products in the form $D^{\beta_1}\eta_L\cdot D^{\beta_2}\Psi_{b,m}$ for $|\beta_1|+|\beta_2|=m$ and $|\beta_1|\geq1$. From  \eqref{estimate of eta} and \eqref{estimate of Psi}, as $L\to+\oo$, we have
\begin{align*}
    \int_{\RR^N}\left|\mathcal{R}_{m}\left(\bar u_L-\Psi_{b,m}\right)\right|^2\md x\leq C\int_L^{+\oo}t^{-2N-2}\ln^{2\sigma}(t)\cdot t^{N-1}\md t\to0.
\end{align*}
Similarly, we have
\begin{align*}
    \int_{\RR^N}\left|\mathcal{R}_{m-1}\left(\bar u_L-\Psi_{b,m}\right)\right|^2\md x&\leq C\int_L^{+\oo}t^{-2N}\ln^{2\sigma}(t)\cdot t^{N-1}\md t\to0.\\
    \int_{\RR^N}|x|^{-1}\left|\mathcal{R}_{m-1}\left(\bar u_L-\Psi_{b,m}\right)\right|^2\md x&\leq C\int_L^{+\oo}t^{-2N-1}\ln^{2\sigma}(t)\cdot t^{N-1}\md t\to0.
\end{align*}
Then we obtain \eqref{approximate Psi}. Now, we will analyze the behavior of $\Psi_{b,m}$ near the origin. Since $Q_{2j-2}(r)$ and $\tilde{Q}_{2j-4}(r)$ are  polynomials containing only even powers, they are smooth at the origin. Note that $g_1(t)=\mathrm{e}^{-bt}$ and $g_2(t)=(1+bt)\mathrm{e}^{-bt}$ possess power series expansions in some neighborhood of the origin. Then, in some punctured neighborhood $B_\delta(0)\backslash\{0\}$ of the origin, for $j=\lfloor m/2\rfloor$, we have
\begin{align*}
    \begin{cases}
    \mathcal{I}^j\left[\mathrm{e}^{-bt}\right](r)=\sum\limits_{l=2j}\limits^{+\oo}a_l r^{l},\qquad&\text{if $m=2j+1$ for $j\geq1$},\\
    \mathcal{I}^{j-1}\left[(1+bt)\mathrm{e}^{-bt}\right](r)=\bar a_{2j-2}r^{2j-2}+\sum\limits_{l=2j}\limits^{+\oo}\bar a_l r^{l},&\text{if $m=2j$ for $j\geq2$}.
    \end{cases}
\end{align*}
Consequently, for any multi-index $\beta$ with $|\beta|\leq m$, we have that $D^\beta \left(\mathcal{I}^j\left[\mathrm{e}^{-bt}\right]\right)$ is bounded in $B_\delta(0)\backslash\{0\}$ when $m$ is odd, and $D^\beta \left(\mathcal{I}^{j-1}\left[(1+bt)\mathrm{e}^{-bt}\right]\right)$ is bounded in $B_\delta(0)\backslash\{0\}$ when $m$ is even. Hence, for any $R>0$ and $\phi\in C^\oo_0\left(B_R(0)\right)$, there holds:
\begin{align*}
    \int_{B_R(0)}\Psi_{b,m}D^\beta\phi\md x=(-1)^{|\beta|}\int_{B_R(0)}\phi\cdot D^\beta\Psi_{b,m}\md x\qquad\text{for }|\beta|\leq m.
\end{align*}
That is, all weak derivatives of $\Psi_{b,m}$ up to order $m$ exist and are bounded on any ball $B_R(0)$. Then we have $\Psi_{b,m}$ belongs to the Sobolev space $W^{m,\oo}_{\text{loc}}\left(\RR^N\right)$. It follows that $\bar u_L=\eta_L\Psi_{b,m}\in W^{m,\oo}_{\text{loc}}\left(\RR^N\right)$ and we denote its mollification by $\left(\bar u_L\right)_{\varepsilon}$ for small $\varepsilon>0$ (see \cite[Appendix C]{MR2597943}). Then $\left(\bar u_L\right)_{\varepsilon}$ is radially symmetric and belongs to $C^\oo_0\left(B_{4L}(0)\right)$. For fixed $L>0$, by Theorem 1 in \cite[Section 5.3.1]{MR2597943}, we have
\begin{align}
\label{mollification 1}
    \lim_{\varepsilon\to0}\int_{\RR^N}\left|\mathcal{R}_m\left(\left(\bar u_L\right)_{\varepsilon}-\bar u_L\right)\right|^2\md x&=\lim_{\varepsilon\to0}\int_{\RR^N}\left|{P}_m\left(\left(\bar u_L\right)_{\varepsilon}-\bar u_L\right)\right|^2\md x\nonumber\\
    &=\lim_{\varepsilon\to0}\int_{\RR^N}\left|\left({P}_m\bar u_L\right)_{\varepsilon}-{P}_m\bar u_L\right|^2\md x\nonumber\\
    &=0.
    \end{align}
    \begin{align}
    \lim_{\varepsilon\to0}\int_{\RR^N}\left|\mathcal{R}_{m-1}\left(\left(\bar u_L\right)_{\varepsilon}-\bar u_L\right)\right|^2\md x&=\lim_{\varepsilon\to0}\int_{\RR^N}\left|{P}_{m-1}\left(\left(\bar u_L\right)_{\varepsilon}-\bar u_L\right)\right|^2\md x\nonumber\\
    &=\lim_{\varepsilon\to0}\int_{\RR^N}\left|\left({P}_{m-1}\bar u_L\right)_{\varepsilon}-{P}_{m-1}\bar u_L\right|^2\md x\nonumber\\
    &=0.
    \label{mollification 2}
\end{align}
Moreover, by Theorem 7 in \cite[Appendix C]{MR2597943}, we have $\left({P}_{m-1}\bar u_L\right)_{\varepsilon}\to{P}_{m-1}\bar u_L$ as $\varepsilon \to0$ uniformly on compact subsets of $B_{4L}(0)$. Then, we obtain that
\begin{align}
\label{mollification 3}
    \int_{\RR^N}|x|^{-1}\left|\mathcal{R}_{m-1}\left(\left(\bar u_L\right)_{\varepsilon}-\bar u_L\right)\right|^2\md x &\leq \|\left({P}_{m-1}\bar u_L\right)_{\varepsilon}-{P}_{m-1}\bar u_L\|_{L^\oo\left(B_{3L}(0)\right)}^2\int_{B_{3L}(0)}|x|^{-1}\md x\nonumber\\
    &\to 0\qquad\text{as $\varepsilon\to0$}.
\end{align}
Finally, combining \eqref{approximate Psi}, \eqref{mollification 1}, \eqref{mollification 2} and \eqref{mollification 3}, we conclude that there exists a sequence of functions $\left\{\Psi_{n}\right\}\subset C^{\oo}_{0,\text{rad}}\left(\RR^N\right)$ such that
\begin{align*}
   \lim_{n\to+\oo}\frac{\int_{\RR^N}\left|\mathcal{R}_m\Psi_{n}\right|^2\md x\int_{\RR^N}\left|\mathcal{R}_{m-1}\Psi_{n}\right|^2\md x}{\left(\int_{\RR^N}|x|^{-1}\left|\mathcal{R}_{m-1}\Psi_{n}\right|^2\md x\right)^2}&= \frac{\int_{\RR^N}\left|\mathcal{R}_m\Psi_{b,m}\right|^2\md x\int_{\RR^N}\left|\mathcal{R}_{m-1}\Psi_{b,m}\right|^2\md x}{\left(\int_{\RR^N}|x|^{-1}\left|\mathcal{R}_{m-1}\Psi_{b,m}\right|^2\md x\right)^2}\\
   &=\bar\mu_m(N).
\end{align*}
That is, $\Psi_{b,m}\in \mathcal{H}^m_3\,(b>0)$ are extremal functions of \eqref{equation m order HyUP}.
\end{proof}

\vspace{2cm}
 \noindent{\Large\textbf{Acknowledgement}} 
 ~\\
~\\
M. Liu  is funded by NSFC (12601191) and Zhejiang Provincial Natural Science Foundation of China (LQN25A010007) and the Fundamental Research Funds for the Provincial Universities of Zhejiang (GK259909299001-029). W.
Zou is funded by National Key R\&D Program of China (Grant 2023YFA1010001) and NSFC (12171265).

\vspace{2cm}
\noindent{\Large\textbf{Statements and Declarations}}
~\\
~\\
{\textbf{Data Availability Statements}}\quad This article has no additional data.~\\
\\
\textbf{Conflict of interest}\quad There is no conflict of
interest.


\end{document}